\theoremstyle{plain}
\newtheorem{thm}{Theorem}[section]
\newtheorem{lemm}[thm]{Lemma}
\newtheorem{cor}[thm]{Corollary}
\theoremstyle{definition}
\newtheorem{df}[thm]{Definition}
\newtheorem{rem}[thm]{Remark}
\newcommand{\supp}{{\rm supp}}
\newcommand{\nablah}{\nabla_{{\rm h}}}
\newcommand{\Deltah}{\Delta_{{\rm h}}}
\newcommand{\Deltahh}{\Delta^{{\rm h}}}
\newcommand{\xh}{x_{\rm h}}
\newcommand{\xih}{\xi_{\rm h}}
\newcommand{\p}{\partial}
\newcommand{\R}{\mathbb{R}}
\newcommand{\f}{\frac}
\newcommand{\Grad}{ \nabla }
\newcommand{\alphah}{\alpha_{\rm h}}
\newcommand{\alphav}{\alpha_3}
\newcommand{\Lh}{L_{x_{\rm h}}}
\newcommand{\Lv}{L_{x_3}}
\newcommand{\Gh}{G_{\rm h}}
\newcommand{\uh}{u_{\rm h}}
\newcommand{\vh}{v_{\rm h}}
\renewcommand{\div}{\operatorname{div}}
\renewcommand{\leq}{\leqslant}
\renewcommand{\geq}{\geqslant}
\begin{document}
\title[Anisotropic NS in a $3$D half-space]
{Large time behavior for solutions to the anisotropic Navier--Stokes equations in a $3$D half-space} 

\author[M.Fujii]{Mikihiro Fujii}
\author[Y.Li]{Yang Li}
\address[M.Fujii]{Graduate School of Science, Nagoya City University, Nagoya, 467-8501, Japan}
\email[M.Fujii]{fujii.mikihiro@nsc.nagoya-cu.ac.jp}
\address[Y.Li]{School of Mathematical Sciences and Center of Pure Mathematics, Anhui University, Hefei, 230601, People's Republic of China}
\email[Y.Li]{lynjum@163.com}
\date{\today}
\keywords{$3$D anisotropic Navier--Stokes equations, large time behavior, decay estimates, half-space}
\subjclass[2010]{35Q30, 35B40, 35Q35}
\begin{abstract}
	We consider the large time behavior of the solution to the anisotropic Navier--Stokes equations in a $3$D half-space. 
    Investigating the precise anisotropic nature of linearized solutions, 
    we obtain the optimal decay estimates for the nonlinear global solutions in anisotropic Lebesgue norms. In particular, we reveal the enhanced dissipation mechanism for the third component of velocity field. 
    We notice that, in contrast to the whole space case, some difficulties arises on the $L^1(\mathbb{R}^3_+)$-estimates of the solution due to the nonlocal operators appearing in the linear solution formula.
    To overcome this, we introduce suitable Besov type spaces and employ the Littlewood--Paley analysis on the tangential space.
\end{abstract}
\maketitle

\tableofcontents

\section{Introduction}
\subsection{Background and motivations}
In this paper, we are interested in the time evolution of anisotropic incompressible Navier--Stokes equations in the half-space, where the effect of vertical viscosity is negligible compared with horizontal viscosity. Such a system of equations is often used in modelling geophysical fluids. For more physical backgrounds and applications on this model, we refer to the classical monograph of Pedlosky \cite{Ped}. In Eulerian coordinates, the governing equations take the form 
\begin{align}\label{eq1}
\begin{cases}
    \partial_t u -  \Delta_{\rm h} u + (u \cdot \nabla)u + \nabla p = 0, & \qquad t>0,x\in \mathbb{R}^3_{+},\\
    \div u = 0, & \qquad t\geq0, x \in \mathbb{R}^3_{+},\\
    u=0, & \qquad t \geq 0, x \in  \p \mathbb{R}^3_{+},\\
    u(0,x) = u_0(x), & \qquad x \in \mathbb{R}^3_{+}.
\end{cases}  
\end{align}
Here, $u:=(u_1,u_2,u_3)$ and $p$ are the velocity field and scalar pressure respectively. $\Deltah:= \p_1^2+\p_2^2$ means the Laplacian operator acting only on the horizontal variables.

Let us briefly recall some known results on the anisotropic Navier--Stokes equations. In their pioneering work \cite{CDGG01}, Chemin et al. proved the existence of local solution for large initial data and the global existence for small initial data in $H^{0,s}(\R^3)$ with $s>1/2$. Iftimie \cite{If02} proved the uniqueness of solutions in $H^{0,s}(\R^3)$ with $s>1/2$. In another excellent work, Paicu \cite{Pai05} proved the global well-posedness in the $L^2$-based anisotropic Besov space $\mathcal{B}^{0,\f{1}{2}}(\R^3)$. Chemin and Zhang \cite{CZ07} and Zhang and Fang \cite{ZF08} further established the global well-posedness in the general $L^p$-based anisotropic Besov space ${B}^{-1+\f{2}{p},\f{1}{2}}_p(\R^3)$ with $2\leq p <\infty$. We refer to \cites{LPZ20,LiuZh20,PZ11,Z09} and the references therein for more results on the global well-posedness for anisotropic Navier--Stokes equations in $\R^3$. With periodic boundary conditions, Paicu \cite{Pai0502} proved the global existence of solutions under additional regularity conditions on certain components of the initial data. Iftimie and Planas \cite{IfPl06} considered the anisotopic Navier--Stokes equations on a half-space supplemented with the boundary condition $u_3|_{\p \R^3_+}=0$ and proved that solutions of the anisotropic Navier--Stokes equations are limits of the Navier--Stokes equations with Navier boundary conditions. 
Paicu and Raugel \cite{PaiRo09} studied the global and local existence and uniqueness of
solutions on a bounded cylindrical domain.

In a recent work, Ji et al. \cite{JWY21} proved the $L^2$-decay estimates of solutions and its first order derivatives in $\R^3$; Xu and Zhang \cite{XZ22} refined the previous result and furthermore proved the enhanced dissipation mechanism for the third component of velocity field\footnote{Roughly speaking, it is proved that the horizontal components $(u_1,u_2)$ decay like the 2D heat kernel, while the third component $u_3$ decays like the 3D heat kernel.}. Later, Fujii \cite{F21} proved the decay estimates in the general $L^p$-norms for any $1\leq p \leq \infty$ and furthermore obtained the asymptotic expansions of solutions; see also \cite{L22} for the extension to the anisotropic incompressible magnetohydrodynamic equations. Very recently, Ji et al. \cite{JTW23} obtained the global stability and asymptotic behavior of solutions in $\mathbb{T}^2 \times \R$, with the velocity field converging to a non-trivial steady state; see also Dong et al. \cite{DWXZ21} for 2D anisotropic Navier--Stokes equations with horizontal dissipation.

In contrast to the whole space case, where there are many previous studies on \eqref{eq1}, there seems no previous studies on the mathematical analysis of the anisotropic Navier--Stokes equations in the physical space with boundary. \emph{It is an interesting problem to study how the interaction between the anisotropic structure of \eqref{eq1} and the boundary condition}. 
In the context of $3$D half-space, we shall give an affirmative answer to the problem above. It should be emphasized that the mechanism of enhanced dissipation for the third component of velocity field will be verified. We also stress that the known techniques previously used for anisotropic Navier--Stokes equations in the whole space cannot be directly adapted to our setting; specific challenges will be presented later. Finally, we notice that there are fruitful decay results on the Navier--Stokes equations and other fluid systems in the half-space, see \cites{BM88,FM01,GMS99,Han10,Han11,Han16,HH12,K89}. However, all of these results considered the full dissipation and thus the methods cannot be directly applied for our anisotropic equations.

\subsection{The main results}
Before giving the main results of this paper, we shall introduce several notations. For any $1\leq p,q \leq \infty$, $\|\cdot  \|_{L^p}$ denotes the norm of $L^p(\R^3_{+})$; $\| \cdot \|_{\Lv^q\Lh^p(\R^3_{+})     }$ denotes the norm of anisotropic Lebesgue space $\Lv^q\Lh^p(\R^3_{+})    $, which is defined by $ L^q(\R_{+};L^p(\R^2_{\xh}) ) $. For any $v\in \R^3$, we denote by $v_{\rm h}:=(v_1,v_2)$ the horizontal components. We denote by $(-\Deltah)^{-\f{s}{2}}$ or $|\nablah|^s$ the pseudo differential oparator with symbol $|\xih|^s$. Let us define the following Chemin--Lerner type space:
\begin{align*}
  \mathcal{L}^{p,q}_{\sigma}(\mathbb{R}^3_+)
    :=   &
    \widetilde{L^q_{x_3}}(\mathbb{R}_{x_3>0};\dot{B}_{p,\sigma}^0(\mathbb{R}^2)) \,\,\,\,\text{  with the norm  }   \\
 \| f \|_{ \mathcal{L}^{p,q}_{\sigma}(\mathbb{R}^3_+) } :=    & \left\{\begin{aligned}
& \left( \sum_{j \in \mathbb{Z}} \| \Delta_j^{\rm h} f \|_{  L^q_{x_3}(\mathbb{R}_{x_3>0};L^p(\mathbb{R}^2_{\xh}  ))     }^{\sigma}           \right)^{ \frac{1}{\sigma} } \,\,\,\, \text{   if   } 1\leq \sigma< \infty ,\\
&  \sup_{j \in \mathbb{Z} }  \| \Delta_j^{\rm h} f \|_{  L^q_{x_3}(\mathbb{R}_{x_3>0};L^p(\mathbb{R}^2_{\xh}  ))     } \,   \,\,\,\,\,\,\,\,\,\,\,\,\,\,\,\,\, \text{   if   }  \sigma= \infty ,   
\end{aligned}\right.
\end{align*}
where $\{\Delta_j^{\rm h}\}_{j \in \mathbb{Z}}$ are the dyadic operators for homogeneous Littlewood--Paley decomposition on $\mathbb{R}^2_{x_{\rm h}}$.
Moreover, we use the abbreviation $\mathcal{L}^{p}_{\sigma}(\mathbb{R}^3_+):= \mathcal{L}^{p,p}_{\sigma}(\mathbb{R}^3_+)$.
We set 
\begin{align}\nonumber 
&
X^s (\R^3_+) = \left\{ f \in {H^s}(\mathbb{R}^3_+) \cap \mathcal{L}_{\infty}^{1}(\mathbb{R}^3_+)\cap \mathcal{L}_{\infty}^{1,\infty}(\mathbb{R}^3_+)\ ;\ \partial_3 f \in \mathcal{L}_{\infty}^{1}(\mathbb{R}^3_+)\cap \mathcal{L}_{\infty}^{1,\infty}(\mathbb{R}^3_+)  \right\}
\end{align}
with the norm 
\begin{align}
\| f \|_{X^s(\mathbb{R}^3_+)} :=
\| f \|_{H^s(\mathbb{R}^3_+)} + \sum_{\alpha_3=0}^1 \| \p_3^{\alphav} f \|_{\mathcal{L}_{\infty}^{1}\cap \mathcal{L}_{\infty}^{1,\infty} (\mathbb{R}^3_+)  }  .
\end{align}

We are ready to give the first result of this paper. 
\begin{thm}\label{main-theorem}
Let $s\geq 5$ be an integer. Then there exists a positive constant $\delta$ such that any $u_0 \in X^s(\R^3_{+})$ with $\div  u_0=0,  u_{0} |_{\p \R^3_+}=0$ and $\| u_0 \|_{X^s(\R^3_{+})}  \leq  \delta $ yields a unique global solution $u \in C((0,\infty); X^s(\R^3_{+}) )$ to the problem \eqref{eq1}.

Moreover, we have the optimal decay estimates. There exists a generic positive constant $C$ such that for any $\alpha=(\alphah,\alpha_3) \in (\mathbb{N}\cup \{ 0 \} )^2 \times (\mathbb{N}\cup \{ 0 \} ) $ with $|\alpha| \leq 1 $, the following estimates hold: 
\begin{itemize}
\item{ Optimal decay estimates for the horizontal components:
\begin{align} \label{decay-h-1}  
 \| \nabla^{\alpha} \uh(t)   \| _{\Lv^q\Lh^p (\R^3_+) }  \leq     C   (1+t)^{ -(1-\f{1}{p})  -\f{|\alphah|}{2}  } \| u_0 \|_{X^s(\R^3_+)   }   
\end{align}
for all $t>0$, $2\leq  p,q \leq \infty$; 
\begin{align} \label{decay-h-2}  
 \| \nabla^{\alpha} \uh(t)   \| _{\Lv^q\Lh^p (\R^3_+) }  \leq     C    t^{ -(1-\f{1}{p})  -\f{|\alphah|}{2}         } \| u_0 \|_{X^s(\R^3_+)   }     
\end{align}  
for all $t \geq 1$
and the following cases 
\begin{align}\label{pq-range-h}
\begin{cases}  
    \text{Case I:} &    1\leq  p,q < 2 \quad \text{with} \quad  (1-\f{1}{p}) + \f{|\alphah|}{2}    >0 ,     \\
      \text{Case II:} &  1\leq q<2,
       2\leq p\leq \infty ,   \\
    \text{Case III:} &  1\leq p<2,
       2\leq q\leq \infty  \quad \text{with} \quad  (1-\f{1}{p}) + \f{|\alphah|}{2}    >0 . 
\end{cases}  
\end{align}   
}
\item{ Optimal decay estimates for the third component: 
\begin{align} \label{decay-3-1}  
 \| \nablah^{\alphah}  u_3(t) \|_{\Lv^q\Lh^p (\R^3_+)}  \leq      C  (1+t)^{ - (1-\f{1}{p})-\f{1}{2}(1-\f{1}{q})   -\f{|\alphah|}{2}         }  \| u_0 \|_{X^s(\R^3_+)   }       
\end{align} 
for all $t>0$, $2\leq  p,q \leq \infty$;
\begin{align} \label{decay-3-2}  
 \| \nablah^{\alphah}  u_3(t) \|_{\Lv^q\Lh^p (\R^3_+)}  \leq      C  t^{ - (1-\f{1}{p})-\f{1}{2}(1-\f{1}{q})   -\f{|\alphah|}{2}         }  \| u_0 \|_{X^s(\R^3_+)   }     
\end{align} 
for all $t>0$ and the following cases 
\begin{align}\label{pq-range-3}
\begin{cases}  
    \text{Case I:} &    1\leq  p,q <2
   \quad \text{with} \quad  (1-\f{1}{p})+\f{1}{2}(1-\f{1}{q})   +\f{|\alphah|}{2}  >0 ,     \\
      \text{Case II:} &  1\leq q<2,
       2\leq p\leq \infty ,   \\
    \text{Case III:} &  1\leq p<2,
       2\leq q\leq \infty  .
\end{cases}  
\end{align}  
}
\end{itemize}
\end{thm}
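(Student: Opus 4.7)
The plan is to combine an explicit analysis of the linearized problem with a contraction mapping argument in a time-weighted function space encoding the decay rates in \eqref{decay-h-1}--\eqref{decay-3-2}. I would first derive the Duhamel representation
\begin{equation*}
u(t) = e^{tA}u_0 - \int_0^t e^{(t-s)A}\HP\bigl((u\cdot\nabla)u\bigr)(s)\,ds
\end{equation*}
for the horizontally viscous Stokes operator $A$ on $\R^3_+$ with Dirichlet boundary condition. A short calculation shows that in the linear problem the pressure is harmonic with $\p_3 p|_{x_3=0}=0$ (obtained from $\div u=0$ together with $u|_{\p\R^3_+}=0$), hence vanishes; so each component of $e^{tA}u_0$ evolves by the $2$D horizontal heat semigroup with $x_3$ acting purely as a parameter. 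In contrast, the nonlinear pressure solves a Neumann problem $\Delta p = -\p_i u_j\,\p_j u_i$ with $\p_3 p|_{x_3=0}=0$, and this is the source of the nonlocal operators in $\HP$ that complicate the analysis.

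I would next prove sharp linear decay estimates. For the horizontal components these follow from standard $2$D heat-kernel bounds in the tangential variables and yield the rate $(1+t)^{-(1-1/p)-|\alphah|/2}$ in $\Lv^q\Lh^p(\R^3_+)$. The enhanced decay of $u_3$ is the structural point: incompressibility combined with $u_3|_{x_3=0}=0$ gives the identity
\begin{equation*}
u_3(t,\xh,x_3) = -\int_0^{x_3} \nablah\cdot\uh(t,\xh,y_3)\,dy_3,
\end{equation*}
which produces one free horizontal derivative on $\uh$ and therefore an extra $t^{-1/2}$ factor. Interpolating in the vertical variable between the $L^\infty_{x_3}$ bound from this identity and the natural $L^2_{x_3}$ energy bound then furnishes the additional $\tfrac{1}{2}(1-\tfrac{1}{q})$ improvement present in \eqref{decay-3-1}--\eqref{decay-3-2}.

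The main obstacle is the lack of $L^1(\R^3_+)$-boundedness of the half-space Leray projector $\HP$, which rules out a direct $L^1$-estimate of the nonlinear term. To bypass this, I would perform a Littlewood--Paley decomposition in the horizontal variables and work in the Chemin--Lerner spaces $\mathcal{L}^{1}_\infty(\R^3_+)$ and $\mathcal{L}^{1,\infty}_\infty(\R^3_+)$ built into the definition of $X^s(\R^3_+)$. In these spaces, frequency-localized bounds together with the horizontal Bernstein inequality allow the horizontal Riesz-type multipliers arising from $\HP$ to be absorbed harmlessly. Bilinear estimates for $(u\cdot\nabla)u$ in the corresponding anisotropic norms, coupled with $H^s$ energy control for the high-regularity part, close the contraction mapping argument and produce the global solution with the prescribed smallness.

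The decay estimates \eqref{decay-h-1}--\eqref{decay-3-2} then follow by bootstrap: assume the target rates a priori, substitute them into Duhamel, split the time integral at $t/2$, and combine the linear decay (paying one horizontal derivative cost on the nonlinear term) with the anisotropic bilinear bounds. The $(p,q)$-restrictions in \eqref{pq-range-h}--\eqref{pq-range-3} are precisely those ranges for which the time integral near $s=t$ remains integrable after the weight $(t-s)^{-(1-1/p)-|\alphah|/2}$ is inserted, while the $\mathcal{L}^{1}_\infty$ and $\mathcal{L}^{1,\infty}_\infty$ norms built into $X^s$ are exactly what allow the integral to close near $s=0$ and are the key reason this anisotropic Besov framework is unavoidable in the half-space setting.
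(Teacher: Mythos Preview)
Your observation that the linear pressure vanishes is correct and is a pleasant simplification: since $\Delta p=0$ with $\partial_3 p|_{x_3=0}=0$, one indeed has $p\equiv 0$ and $e^{-tA}u_0=e^{t\Deltah}u_0$ componentwise. This is equivalent to the paper's formulas \eqref{sol-form-3}--\eqref{sol-form-h}, just written more compactly. Your interpolation argument for the enhanced linear decay of $u_3$ also goes through once one passes to the $\mathcal{L}^{1}_\infty$ framework.

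The genuine gap is in the nonlinear step. You write the Duhamel term as $\int_0^t e^{(t-s)A}\HP\bigl((u\cdot\nabla)u\bigr)\,ds$ and say that frequency localization ``allows the horizontal Riesz-type multipliers arising from $\HP$ to be absorbed harmlessly.'' But in the half-space the Helmholtz projector is not a horizontal Fourier multiplier at all: the nonlinear pressure solves a Neumann problem whose solution operator mixes the $x_3$ variable with $|\nablah|$ in a nontrivial way, and the resulting operators are \emph{not} covered by horizontal Bernstein alone. The paper's substantive work (Lemma~\ref{lemm:sol-form} and Corollary~\ref{lemm:sol-form-u}) is precisely to derive, via Ukai's technique, an explicit representation of the Duhamel term through the concrete kernels $V^{(\pm)},W^{(\pm)},T$, each of the form $|\nablah|\,e^{-|\nablah|\,|x_3\pm y_3|}$ convolved in $x_3$; their $\mathcal{L}^{p,q}_\sigma$-boundedness (Lemma~\ref{bdd-VWT}) is then a short Hausdorff--Young computation. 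Without that explicit formula, your sketch has no mechanism for controlling the vertical nonlocality of $\HP$, and the bilinear estimate you invoke cannot be stated, let alone proved. Relatedly, the paper does not run a contraction in a single time-weighted space: it first obtains global $H^s$ existence by energy methods (Lemma~\ref{strong-sol}), upgrades to $X^s$ (Lemma~\ref{strong-sol-2}), and then bootstraps the decay rates range by range.

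Finally, your explanation of the $(p,q)$-restrictions in \eqref{pq-range-h}--\eqref{pq-range-3} is off. They do not come from integrability of $(t-s)^{-(1-1/p)-|\alphah|/2}$ near $s=t$; they come from the \emph{linear} estimate (second part of Lemma~\ref{lemm-heat} and Lemma~\ref{lemm-enh-decay}), where summing $\sum_{j\in\mathbb{Z}}2^{2sj}e^{-c2^{2j}t}$ to pass from $\mathcal{L}^{1}_\infty$ data to $\mathcal{L}^{p,q}_1$ output requires the exponent $s=(1-\tfrac1p)+\tfrac12(1-\tfrac1q)+\tfrac{|\alphah|}{2}$ (or its horizontal analogue) to be strictly positive. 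The Duhamel integrals in Lemmas~\ref{decay-duhamel-2}, \ref{decay-duhamel-leq-2}, \ref{decay-duhamel-6.2-lem} close for all the stated $(p,q)$ once the linear rates are in hand.
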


\begin{rem}
As we can see from \eqref{decay-h-1}-\eqref{decay-h-2} that the horizontal components of velocity field decay like the 2D heat kernel as time goes to infinity. On the contrary, \eqref{decay-3-1}-\eqref{decay-3-2} reveal that the vertical component decays like the 3D heat kernel. Therefore, we have verified the enhanced dissipation mechanism of anisotropic Navier--Stokes equations in the half-space case.  
\end{rem}

Let us mention the characteristic of Theorem \ref{main-theorem}.
In general, the linear analysis is a key ingredient to obtain the optimal decay estimate of global small solutions.
To analyze the linear system, we make use of the technique of Ukai \cite{U87} to provide the explicit solution formula.
This solution formula involves some singular integral operators such as the Riesz transforms for the tangential space, which prevent us from analyzing the time-evolution of $L^1(\mathbb{R}^3_+)$-norm and $L^{\infty}(\mathbb{R}^3_+)$-norm of the linear solution.
To overcome this difficulty, we focus on the fact that the Riesz transforms are bounded in homogeneous Besov spaces and employ the Besov type spaces to obtain the estimate of solutions in a $L^1$-type space 
\begin{align}
    \|u(t)\|_{\mathcal{L}^{1}_{\infty}(\mathbb{R}^3_+)}
    <\infty, \qquad t \geq 0.
\end{align}
This is the reason why we introduced the Chemin--Lerner type space $\mathcal{L}^{p,q}_{\sigma}(\mathbb{R}^3_+)$ above.

On the other hand, Theorem \ref{main-theorem} does not include the uniform $L^1$-estimate 
\begin{align}
    \sup_{t\geq 0}\| u(t) \|_{L^1(\mathbb{R}^3_+)} < \infty.
\end{align}
In the following theorem, we show that the above $L^1$-estimate holds if we impose a slightly stronger assumption on the initial data.

\begin{thm}\label{main-theorem-2}
Let $s\geq 5$ be an integer and let $u \in C((0,\infty); X^s(\R^3_{+}) )$ be the solution to the problem \eqref{eq1} with $u_0 \in X^s(\R^3_{+})$ satisfying $\div  u_0=0, u_{0} |_{\p \R^3_+}=0$ and $\| u_0 \|_{X^s(\R^3_{+})}  \leq  \delta $, where $\delta$ is the constant appearing in Theorem \ref{main-theorem}.
Assume further that $u_0 \in \mathcal{L}^1_1(\R^3_+)$. Then 
there exists a generic positive constant $C$ such that
\begin{align} \label{decay-u3-L1L1}  
    \begin{split}
    &
    \|  \uh(t) \|_{L^1 (\R^3_+)} 
    \leq    
    C  
    \| u_0 \|_{X^s\cap \mathcal{L}^1_1 (\R^3_+)   } ,\\
    &\|  u_3(t) \|_{\mathcal{L}^1_1 (\R^3_+)} 
    \leq    
    C  
    \| u_0 \|_{X^s\cap \mathcal{L}^1_1 (\R^3_+)   } 
    \end{split}
\end{align}
for all $t>0$.
\end{thm}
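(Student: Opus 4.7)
The plan is to propagate an $L^1$-type bound on the global solution already constructed in Theorem \ref{main-theorem} via the Duhamel representation for the anisotropic Stokes semigroup $\{S(t)\}_{t \geq 0}$ on $\R^3_+$ with the no-slip boundary condition:
\begin{align}
u(t) = S(t) u_0 - \int_0^t S(t-s) \HP \, \div \bigl( u \otimes u \bigr)(s) \, ds,
\end{align}
where $\HP$ is the Helmholtz projection adapted to $\R^3_+$. The linear and nonlinear contributions will be estimated separately in the target norms $\|\cdot\|_{L^1(\R^3_+)}$ for the horizontal components and $\|\cdot\|_{\mathcal{L}^1_1(\R^3_+)}$ for the third component.

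For the linear term, I would exploit the Ukai-type solution formula already used in the proof of Theorem \ref{main-theorem} to derive the bound
\begin{align}
\| (S(t) u_0)_{\rm h} \|_{L^1(\R^3_+)} + \| (S(t) u_0)_3 \|_{\mathcal{L}^1_1(\R^3_+)} \leq C \| u_0 \|_{X^s \cap \mathcal{L}^1_1(\R^3_+)}, \qquad t \geq 0,
\end{align}
together with companion bounds on $S(t-s) \HP \div$ carrying a heat-kernel factor $(t-s)^{-|\alphah|/2}$ for the tangential derivatives. As emphasized in the introduction, the tangential Riesz transforms appearing in the formula are unbounded on $L^1(\R^2_{\xh})$ but are bounded on $\dot B^0_{1,1}(\R^2_{\xh})$, which forces the Chemin--Lerner choice $\mathcal{L}^1_1$ for the third component; the horizontal components, by contrast, can be handled directly in $L^1$ thanks to the cancellation structure of the formula.

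For the nonlinear term, I would split $u \otimes u$ into the blocks $\uh \otimes \uh$, $\uh\, u_3$, and $u_3^2$, and estimate each factor in an anisotropic norm $\Lv^{q'} \Lh^{p'}$ chosen so that the product of its decay rate with the heat-kernel factor $(t-s)^{-|\alphah|/2}$ is time-integrable on $[0,t]$. The enhanced-dissipation estimates \eqref{decay-3-1}--\eqref{decay-3-2} supply the integrability of any product containing $u_3$. The critical piece is $\uh \otimes \uh$: a crude estimate $\| \uh \otimes \uh \|_{L^1} \lesssim \| \uh \|_{L^2}^2 \lesssim (1+s)^{-1}$ produces only a logarithmically divergent time integral, so one must invoke the Case II range ($1 \leq q < 2$, $2 \leq p \leq \infty$) of \eqref{pq-range-h} to trade horizontal integrability for vertical integrability and regain a strictly time-integrable bound. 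The divergence-free identity $\p_3 u_3 = -\nablah \cdot \uh$ will also be used to rewrite any $\p_3$-derivative of the nonlinearity in conservative form, since $S(t) \HP \p_3$ does not enjoy an extra tangential heat-kernel factor.

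The main obstacle is expected to be precisely this $L^1$-closure for $\uh$: the $\uh \otimes \uh$-contribution sits at the logarithmic borderline of time-integrability, and one must exploit the anisotropic splitting $\Lv^{q'} \Lh^{p'}$ with $q' > 2$, together with the stronger decay of $u_3$, to gain a strictly better-than-$s^{-1}$ bound. Once every quadratic nonlinearity has been placed in norms compatible with the linear semigroup bounds, a standard continuity argument built on the smallness of $\| u_0 \|_{X^s(\R^3_+)}$ from Theorem \ref{main-theorem} closes the bootstrap uniformly on $(0,\infty)$, yielding \eqref{decay-u3-L1L1}.
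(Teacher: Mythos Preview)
Your overall strategy matches the paper's: use the explicit Ukai-type formula (Corollary \ref{lemm:sol-form-u}), bound the linear part via \eqref{enh-decay-1}, and estimate each Duhamel block using the decay rates from Theorem \ref{main-theorem}. However, you misdiagnose the critical term. In the solution formula every occurrence of $\uh\otimes\uh$ --- namely $\mathcal{D}_1^{\rm v}$, $\mathcal{D}_2^{\rm h}$, $\mathcal{D}_3^{\rm h}$ --- carries a tangential derivative, so the heat semigroup supplies a factor $(t-\tau)^{-1/2}$, and the crude bound $\|\uh\otimes\uh\|_{L^1}\leq\|\uh\|_{L^2}^2\lesssim(1+\tau)^{-1}$ is already enough: the integral $\int_0^t(t-\tau)^{-1/2}(1+\tau)^{-1}\,d\tau$ is uniformly bounded in $t$ (split at $t/2$), not logarithmically divergent. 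No appeal to the Case~II range of \eqref{pq-range-h} is needed, and in any case those estimates give no extra decay for $\uh$ since the rate $(1+t)^{-(1-1/p)}$ there is independent of $q$.

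The genuinely delicate term is the one you mention only in passing: $\mathcal{D}_1^{\rm h}[u]=-\int_0^t e^{(t-\tau)\Deltah}\partial_{x_3}(\uh u_3)\,d\tau$, which has \emph{no} horizontal smoothing factor. The paper handles it exactly by the divergence-free identity you cite: expand $\partial_{x_3}(u_3\uh)=-(\div_{\rm h}\uh)\uh+u_3\,\partial_{x_3}\uh$ and observe that each piece decays in $L^1(\R^3_+)$ at least like $(1+\tau)^{-5/4}$, thanks to $\|u_3\|_{L^2}\lesssim(1+\tau)^{-3/4}$ and $\|\nablah\uh\|_{L^2}\lesssim(1+\tau)^{-1}$ from Theorem \ref{main-theorem}; this integrates directly without any $(t-\tau)$ factor. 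Finally, no bootstrap is needed: every nonlinear bound uses only the $\Lv^q\Lh^p$ decay rates already supplied by Theorem \ref{main-theorem}, never the $L^1$ or $\mathcal{L}^1_1$ norms being sought, so the argument closes in a single pass.
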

\begin{rem}
It follows from the continuous embedding $\mathcal{L}_1^1(\mathbb{R}^3_+) \hookrightarrow L^1(\mathbb{R}^3_+)$ that Theorem \ref{main-theorem-2} implies
\begin{align}
    \| u(t) \|_{L^1(\R^3_+) }
    \leq
    C  
    \| u_0 \|_{X^s\cap \mathcal{L}^1_1 (\R^3_+)}.
\end{align}
On the contrary, it is not clear whether or not the $\| u(t) \|_{\mathcal{L}_1^1(\mathbb{R}^3_+)}$ is also bounded in time. 
The result is reasonable in the sense that the vertical component of velocity field enjoys the enhanced dissipation mechanism.
\end{rem}

The rest of this paper is devoted to the proof of Theorems \ref{main-theorem}-\ref{main-theorem-2} and is arranged as follows. In Section \ref{line-anal}, we derive the solution formula for nonlinear equations and present the asymptotic analysis for the anisotropic Stokes semigroup. 
In Section \ref{decay-geq-2}, we analyze the decay estimates for all the Duhamel terms in case of $2\leq p,q \leq \infty$. 
In Section \ref{main-thm-1}, we complete the proof of Theorem \ref{main-theorem} in case of $2\leq p,q \leq \infty$ via the bootstrapping argument. 
In Sections \ref{decay-less-2}-\ref{main-thm-2}, we use the duality argument to handle the decay estimates in case of $1\leq p,q <2$ or $1\leq p<2,
       2\leq q\leq \infty$ or $1\leq q<2,
       2\leq p \leq \infty$. 
       In Section \ref{proof-thm-1.3}, we show the uniform boundedness of velocity field in the endpoint space $L^1 (\R^3_+)$. 
For the convenience of the reader, we collect some basic tools from the Littlewood--Paley theory in the Appendix.

\section{Linear analysis}\label{line-anal}
We consider the following linearized equations on the half-space $\R^3_+$: 
\begin{align}\label{eq:lin-1}
    \begin{cases}
    \partial_t u - \Deltah u + \nabla p = \div f, \qquad & t > 0, x \in \mathbb{R}^3_+,\\
    \div u = 0, \qquad & t \geq 0, x \in \mathbb{R}^3_+,\\
    u = 0,\qquad & t \geq 0, x \in \partial\mathbb{R}^3_+,\\
    u(0,x) = u_0(x),\qquad & x \in \mathbb{R}^3_+,
    \end{cases}
\end{align}  
where $f=(f_{k,\ell}(t,x))_{1 \leq k,\ell \leq 3}$ is a given external force satisfying $f_{k,\ell}=f_{\ell,k}$.   
\subsection{Solution formula}
The aim of this subsection is to derive an explicit formula for the solution to \eqref{eq:lin-1} by using \emph{only} Fourier transform on $\mathbb{R}^2$. 

Let $f^{\rm w}=(f^{\rm w}_{k,\ell}(t,x))_{1 \leq k,\ell \leq 3}$ be the extension of $f$ defined as
\begin{align}
    f^{\rm w}_{k,\ell}(t,x)
    :=
    \begin{cases}
        f_{k,\ell}(t,x) \qquad & (x_3  \geq 0),  \\
        (-1)^{\delta_{\ell,3} }f_{k,\ell}(t,\xh,-x_3) \qquad & (x_3<0),
    \end{cases}
\end{align}
for $k,\ell=1,2,3$.
Here, we note that $f_{k,3}^{\rm w} \neq f_{3,k}^{\rm w}$ in general.
We consider the linear equations \eqref{eq:lin-1} on the whole space $\mathbb{R}^3$: 
\begin{align}
    \begin{cases}
    \partial_t u^{\rm w} - \Deltah u^{\rm w} + \nabla p^{\rm w} = \div f^{\rm w}, \qquad & t > 0, x \in \mathbb{R}^3,\\
    \div u^{\rm w} = 0, \qquad & t > 0, x \in \mathbb{R}^3,\\
    u^{\rm w}(0,x) = 0, \qquad & x \in \mathbb{R}^3.
    \end{cases}
\end{align}
Then, $v:= u - u^{\rm w}$ and $q:= p - p^{\rm w}$ should solve the following equations on the half-space $\R^3_+$:
\begin{align}
    \begin{cases}
    \partial_t v - \Deltah v + \nabla q = 0, \qquad & t > 0, x \in \mathbb{R}^3_+,\\
    \div v = 0, \qquad & t \geq 0, x \in \mathbb{R}^3_+,\\
    v= - u^{\rm w} ,\qquad & t \geq 0, x \in \partial\mathbb{R}^3_+,\\
    v(0,x) = u_0(x),\qquad & x \in \mathbb{R}^3_+.
    \end{cases}
\end{align}

Next, we aim to derive an explicit formula to the equations above by adapting the pioneering work of Ukai \cite{U87}. Applying $\div$ to the first equation of above, we see that $\Delta q = 0$, which implies $(|\xih|-\partial_{x_3})(|\xih|+\partial_{x_3})\mathscr{F}_{\mathbb R^2}[q(\cdot,x_3)]=0$.
By this and the fact that $\mathscr{F}_{\mathbb R^2}[q(\cdot,x_3)]$ is an element of  $\mathscr{S}'(\mathbb{R}^2)$, it holds $(|\xih|+\partial_{x_3})\mathscr{F}_{\mathbb R^2}[q(\cdot,x_3)]=0$. Let 
\begin{align}
    w_3
    :=(|\nablah|+\partial_{x_3})v_3
    =|\nablah|v_3 - \nablah \cdot \vh.  
\end{align}
Then, $w_3$ should solve $\partial_t w_3 - \Deltah w_3 = 0$ with the initial data $(|\nablah|+\partial_{x_3})u_{0,3}$, and thus $w_3(t)=e^{t\Deltah}(|\nablah|+\partial_{x_3})u_{0,3}$. 
Thus, $v_3$ obeys 
\begin{align}
    \begin{cases}
        (  |\nablah|  +\partial_{x_3})v_3 = e^{t\Deltah}(|\nablah|+\partial_{x_3})  u_{0,3} , \qquad &x_3 >0,\\
        v_3(t,\xh,0)= -u^{\rm w}_3(t,\xh,0)
        ,\ & x_3=0.
    \end{cases}
\end{align}
Solving this equation, we have
\begin{align}             
    v_3(t,x) ={}&  
     -e^{-x_3|\nablah|}u^{\rm w}_3(t,\xh,0)
    +
    U e ^{t \Delta_{\rm h}} (u_{0,3}- S \cdot u_{0,\rm h} )(x) .   \label{v3-1} 
\end{align}
Here $S=(S_{x_1},S_{x_2})=\nablah(-\Deltah)^{-\frac{1}{2}}$ are the Riesz transforms on $\R^2$, and $U$ is defined by
\begin{align}
    {Uf} (x)=
    |\nablah| \int_0^{x_3}  e^{-(x_3-y_3)|\nablah| }  {f} (\xh, y_3) dy_3. 
\end{align}
Let $w_{\rm h}:=v_{\rm h} + Sv_3$. 
Since $\nablah q + S \partial_{x_3} q =S(|\nablah|+\partial_{x_3})q =0$, we see that $\partial_t w_{\rm h} - \Deltah w_{\rm h} = 0$ with the initial data $u_{0,\rm h} + Su_{0,3}$, which yields
$v_{\rm h}(t) + S v_3(t) = w_{\rm h}(t) = e ^{ t \Delta_{\rm h} }  (u_{0,\rm h}  + S u_{0,3}  ) $ and
\begin{align}             
    v_{\rm h}(t,x)={}&    
    e ^{ t \Delta_{\rm h} }  (u_{0,\rm h}  + S u_{0,3}  ) (x)
    -S v_3(t,x).     
    \label{vh-1}
\end{align}
Hence, we obtain 
\begin{align}             
    &
    u_3(t,x) ={}  
    u_3^{\rm w}(t,x)
    -
     e^{-x_3|\nablah|}u^{\rm w}_3(t,\xh,0)
    + 
    U e ^{t \Delta_{\rm h}} (u_{0,3}- S \cdot u_{0,\rm h} )(x) ,   \label{u3-1}  \\   
    &
    \begin{aligned}
    u_{\rm h}(t,x)={}&
    u_{\rm h}^{\rm w}(t,x)
    +   
    e ^{ t \Delta_{\rm h} }  (u_{0,\rm h}  + S u_{0,3}  ) (x)\\
    &
    +
    S
     e^{-x_3|\nablah|}u^{\rm w}_3(t,\xh,0)
    -
    SU e ^{t \Delta_{\rm h}} (u_{0,3}- S \cdot u_{0,\rm h} )(x).
    \end{aligned}     
    \label{uh-1}
\end{align}

In what follows, we consider the formula for 
\begin{align}
    u^{\rm w}(t) = \int_0^t e^{(t-\tau)\Deltah} \mathbb{P}\div f^{\rm w}(\tau) d\tau.
\end{align}
By the Fourier transform on $\mathbb{R}^3$, we have
\begin{align}
    \mathscr{F}_{\mathbb{R}^3}[u^{\rm w}_3](t,\xi)
    ={}&
    \int_0^t 
    e^{-(t - \tau)|\xih|^2}
    \sum_{\ell,m=1}^3
    \left( \delta_{3,\ell} - \frac{\xi_{3}\xi_{\ell}}{|\xi|^2} \right)
    i\xi_m 
    \mathscr{F}_{\mathbb{R}^3}[f^{\rm w}_{\ell,m}](\tau,\xi) d\tau\\
    ={}&
    \int_0^t 
    e^{-(t - \tau)|\xih|^2}
    \sum_{m=1}^3
    i\xi_m 
    \mathscr{F}_{\mathbb{R}^3}[f^{\rm w}_{3,m}](\tau,\xi) d\tau\\
    &-
    \int_0^t 
    e^{-(t - \tau)|\xih|^2}
    \sum_{\ell,m=1}^3
    \frac{i\xi_{3}\xi_{\ell}\xi_m}{|\xi|^2}
    \mathscr{F}_{\mathbb{R}^3}[f^{\rm w}_{\ell,m}](\tau,\xi) d\tau\\
    ={}&
    \int_0^t 
    e^{-(t - \tau)|\xih|^2}
    \sum_{m=1}^2
    i\xi_m 
    \mathscr{F}_{\mathbb{R}^3}[f^{\rm w}_{3,m}](\tau,\xi) d\tau\\
    &
    +
    \int_0^t 
    e^{-(t - \tau)|\xih|^2}
    i\xi_3 
    \mathscr{F}_{\mathbb{R}^3}[f^{\rm w}_{3,3}](\tau,\xi) d\tau\\
    &
    -
    \frac{i\xi_3}{|\xih|^2 + \xi_3^2}
    \int_0^t 
    e^{-(t - \tau)|\xih|^2}
    \sum_{\ell,m=1}^2
    {\xi_{\ell}\xi_{m}}
    \mathscr{F}_{\mathbb{R}^3}[f^{\rm w}_{\ell,m}](\tau,\xi) d\tau\\
    &  
    -
    \frac{\xi_3^2}{|\xih|^2 + \xi_3^2}
    \int_0^t 
    e^{-(t - \tau)|\xih|^2}
    \sum_{\ell=1}^2
    {i\xi_{\ell}}
    \mathscr{F}_{\mathbb{R}^3}[   f^{\rm w}_{\ell,3}   
     +f^{\rm w}_{3,\ell}](\tau,\xi) d\tau\\
    &
    -
    \frac{i\xi_3^3}{|\xih|^2 + \xi_3^2}
    \int_0^t 
    e^{-(t - \tau)|\xih|^2}
    \mathscr{F}_{\mathbb{R}^3}[f^{\rm w}_{3,3}](\tau,\xi) d\tau.
\end{align}
Here, since there holds
\begin{align}
    &
    \int_0^t 
    e^{-(t - \tau)|\xih|^2}
    \sum_{m=1}^2
    i\xi_m 
    \mathscr{F}_{\mathbb{R}^3}[f^{\rm w}_{3,m}](\tau,\xi) d\tau\\
    &\quad
    -
    \frac{\xi_3^2}{|\xih|^2 + \xi_3^2}
    \int_0^t 
    e^{-(t - \tau)|\xih|^2}
    \sum_{\ell=1}^2
    {i\xi_{\ell}}
    \mathscr{F}_{\mathbb{R}^3}[f^{\rm w}_{3,\ell}](\tau,\xi) d\tau\\
    &\qquad
    =
    \frac{|\xih|^2}{|\xih|^2 + \xi_3^2}
    \int_0^t 
    e^{-(t - \tau)|\xih|^2}
    \sum_{\ell=1}^2
    {i\xi_{\ell}}
    \mathscr{F}_{\mathbb{R}^3}[f^{\rm w}_{3,\ell}](\tau,\xi) d\tau
\end{align}
and 
\begin{align}
    &
    \int_0^t 
    e^{-(t - \tau)|\xih|^2}
    i\xi_3 
    \mathscr{F}_{\mathbb{R}^3}[f^{\rm w}_{3,3}](\tau,\xi) d\tau
    -
    \frac{i\xi_3^3}{|\xih|^2 + \xi_3^2}
    \int_0^t 
    e^{-(t - \tau)|\xih|^2}
    \mathscr{F}_{\mathbb{R}^3}[f^{\rm w}_{3,3}](\tau,\xi) d\tau\\
    &
    \quad 
    =
    \frac{|\xih|^2}{|\xih|^2 + \xi_3^2}
    \int_0^t 
    e^{-(t - \tau)|\xih|^2}
    i\xi_3 
    \mathscr{F}_{\mathbb{R}^3}[f^{\rm w}_{3,3}](\tau,\xi) d\tau,
\end{align}
we have
\begin{align}\label{Fu_3^w}
    \mathscr{F}_{\mathbb{R}^3}[u^{\rm w}_3](t,\xi)
    ={}&
    -
    \frac{i\xi_3}{|\xih|^2 + \xi_3^2}
    \int_0^t 
    e^{-(t - \tau)|\xih|^2}
    \sum_{\ell,m=1}^2
    {\xi_{\ell}\xi_{m}}
    \mathscr{F}_{\mathbb{R}^3}[f^{\rm w}_{\ell,m}](\tau,\xi) d\tau\\
    &
    + 
    \frac{i\xi_3|\xih|^2}{|\xih|^2 + \xi_3^2}
    \int_0^t 
    e^{-(t - \tau)|\xih|^2}
    \mathscr{F}_{\mathbb{R}^3}[{f^{\rm w}_{3,3}}](\tau,\xi) d\tau      \\
    &
    +
    \frac{|\xih|^2}{|\xih|^2 + \xi_3^2}
    \int_0^t 
    e^{-(t - \tau)|\xih|^2}
    \sum_{\ell=1}^{ 2 }   
    {i\xi_{\ell}}
    \mathscr{F}_{\mathbb{R}^3}[{f^{\rm w}_{3,\ell}}](\tau,\xi) d\tau   \\
    &
    -  
    \int_0^t 
    e^{-(t - \tau)|\xih|^2}
    \sum_{\ell=1}^{ 2 }   
    {i\xi_{\ell}}
    \mathscr{F}_{\mathbb{R}^3}[ {f^{\rm w}_{\ell,3} }](\tau,\xi) d\tau   \\
    &
    + 
     \frac{|\xih|^2}{|\xih|^2 + \xi_3^2}
    \int_0^t 
    e^{-(t - \tau)|\xih|^2}
    \sum_{\ell=1}^{ 2 }   
    {i\xi_{\ell}}
    \mathscr{F}_{\mathbb{R}^3}[{f^{\rm w}_{\ell,3}}](\tau,\xi) d\tau .  
\end{align}
For $k=1,2$, we see that 
\begin{align}
    \mathscr{F}_{\mathbb{R}^3}[u^{\rm w}_k](t,\xi)
    ={}&
    \int_0^t 
    e^{-(t - \tau)|\xih|^2}
    \sum_{\ell,m=1}^3
    \left( \delta_{k,\ell} - \frac{\xi_{k}\xi_{\ell}}{|\xi|^2} \right)
    i\xi_m 
    \mathscr{F}_{\mathbb{R}^3}[f^{\rm w}_{\ell,m}](\tau,\xi) d\tau\\
    ={}&
    \int_0^t 
    e^{-(t - \tau)|\xih|^2}
    \sum_{m=1}^3
    i\xi_m 
    \mathscr{F}_{\mathbb{R}^3}[f^{\rm w}_{k,m}](\tau,\xi) d\tau\\
    &
    -
    \int_0^t 
    e^{-(t - \tau)|\xih|^2}
    \sum_{\ell,m=1}^3
    \frac{i\xi_{k}\xi_{\ell}\xi_m}{|\xi|^2}   
    \mathscr{F}_{\mathbb{R}^3}[f^{\rm w}_{\ell,m}](\tau,\xi) d\tau\\
    ={}&
    \int_0^t 
    e^{-(t - \tau)|\xih|^2}
    \sum_{m=1}^3
    i\xi_m 
    \mathscr{F}_{\mathbb{R}^3}[f^{\rm w}_{k,m}](\tau,\xi) d\tau\\
    &
    -
    \frac{1}{|\xih|^2+\xi_3^2}
    \int_0^t 
    e^{-(t - \tau)|\xih|^2}
    \sum_{\ell,m=1}^2
    {i\xi_{k}\xi_{\ell}\xi_m}   
    \mathscr{F}_{\mathbb{R}^3}[f^{\rm w}_{\ell,m}](\tau,\xi) d\tau\\
    &
    -
    \frac{i\xi_3}{|\xih|^2+\xi_3^2}
    \int_0^t 
    e^{-(t - \tau)|\xih|^2}
    \sum_{\ell=1}^2
    {\xi_{k}\xi_{\ell}}   
    \mathscr{F}_{\mathbb{R}^3}[f^{\rm w}_{3,\ell}+f^{\rm w}_{\ell,3}](\tau,\xi) d\tau\\
    &
    -
    \int_0^t 
    e^{-(t - \tau)|\xih|^2}
    {i\xi_{k}}   
    \mathscr{F}_{\mathbb{R}^3}[f^{\rm w}_{3,3}](\tau,\xi) d\tau\\
    & 
    +
    \frac{|\xih|^2}{|\xih|^2+\xi_3^2}
    \int_0^t 
    e^{-(t - \tau)|\xih|^2}
    {i\xi_{k}}   
    \mathscr{F}_{\mathbb{R}^3}[f^{\rm w}_{3,3}](\tau,\xi) d\tau\\
\end{align}
Using the basic facts that\footnote{Our definition of Fourier transform and inverse Fourier transform is given by
\begin{align}
    \mathscr{F}_{\mathbb{R}^d} [ f ](\xi) = \int_{\mathbb{R}^d} e^{-ix\cdot \xi}f(x) dx,\qquad
    \mathscr{F}_{\mathbb{R}^d}^{-1} [ f ](x) = \frac{1}{(2\pi)^d} \int_{\mathbb{R}^d} e^{ix\cdot \xi}f(\xi) d\xi.
\end{align}
}
\begin{align}
    \mathscr{F}_{\mathbb{R}}^{-1}
    \left[ \frac{1}{|\xih|^2 + \xi_3^2} \right](x_3) 
    &= 
    \frac{1}{2|\xih|}e^{-|\xih||x_3|},\\  
    \mathscr{F}_{\mathbb{R}}^{-1}
    \left[ \frac{i\xi_3}{|\xih|^2 + \xi_3^2} \right](x_3) 
    &= 
    -
    \frac{1}{2}
    \operatorname{sgn}(x_3)
    e^{-|\xih||x_3|},
\end{align}
we obtain 
\begin{align}
    \mathscr{F}_{\mathbb{R}^2}[u^{\rm w}_3](t,  \xih,x_3    )
    ={}&
    { -}
    \int_{\mathbb{R}}
    \frac{ |\xih|}{2}\operatorname{sgn}(x_3-y_3)
    e^{-|\xih||x_3-y_3|}\\
    &\qquad 
    \times
    \int_0^t 
    e^{-(t - \tau)|\xih|^2}
    \sum_{\ell,m=1}^2
    {i}\xi_{\ell}
    { \frac{i\xi_m}{|\xih|} }
    \mathscr{F}_{\mathbb{R}^2}[f^{\rm w}_{\ell,m}](\tau,\xih,y_3) d\tau
    dy_3\\
    & 
    -
    \int_{\mathbb{R}}
    \frac{|\xih|^{ 2   }  }{2}
    e^{-|\xih||x_3-y_3|}\operatorname{sgn}(x_3-y_3)\\
    &\qquad
    \times
    \int_0^t 
    e^{-(t - \tau)|\xih|^2}
    \mathscr{F}_{\mathbb{R}^2}[{f^{\rm w}_{3,3}}](\tau,\xih,y_3) d\tau
    dy_3\\
    &
    +
    \int_{\mathbb{R}}
    \frac{|\xih|}{2}
    e^{-|\xih||x_3-y_3|}
    \int_0^t 
    e^{-(t - \tau)|\xih|^2}
    \sum_{\ell=1}^{ 2 }      
    {i\xi_{\ell}}
    \mathscr{F}_{\mathbb{R}^2}[{f^{\rm w}_{3,\ell}}](\tau,\xih,y_3) d\tau
    dy_3 \\
    & 
    - 
    \int_0^t 
    e^{-(t - \tau)|\xih|^2}
    \sum_{\ell=1}^{ 2}      
    {i\xi_{\ell}}
    \mathscr{F}_{\mathbb{R}^2}[{f^{\rm w}_{\ell,3}}](\tau,\xih,x_3) d\tau  \\
    &  
      + 
     \int_{\mathbb{R}}
    \frac{|\xih|}{2}
    e^{-|\xih||x_3-y_3|}
    \int_0^t 
    e^{-(t - \tau)|\xih|^2}
    \sum_{\ell=1}^{ 2}      
    {i\xi_{\ell}}
    \mathscr{F}_{\mathbb{R}^2}[{f^{\rm w}_{\ell,3}}](\tau,\xih,y_3) d\tau
    d y_3.    
\end{align}
By the change of the variables and the definition of the extension of the external force, we have the following representation on $\R^3_+$
\begin{align}
    u^{\rm w}_3(t)
    ={}&
    -
    V^{(+)}
    \int_0^t 
    e^{(t - \tau)\Deltah}
    \sum_{\ell,m=1}^2
    {\partial_{x_\ell}{S_{x_m}}}
    f_{\ell,m}(\tau) d\tau\\
    &
    -
    V^{(-)}
    \int_0^t 
    e^{(t - \tau)\Deltah}
    |\nablah|
    {f_{3,3}}(\tau) d\tau   \label{uw3} \\
    & 
    +
      (W-1)    
    \int_0^t 
    e^{(t - \tau)\Deltah}   
   \sum_{\ell=1}^2  
    {\partial_{x_\ell}}
    {f_{3,\ell}}(\tau) d\tau,
\end{align}
where and in what follows we set 
\begin{align}
    V^{(\pm)}f(x)
    :={}&
    \frac{1}{2}
    \int_0^{\infty}
    {|\nablah|}
    \left\{
    \operatorname{sgn}(x_3-y_3)
    e^{-|\nablah||x_3-y_3|}
    \pm
    e^{-|\nablah|(x_3+y_3)}
    \right\}
    f(\xh,y_3)dy_3,\\
    W^{(\pm)}f(x)
    :={}&
    \frac{1}{2}
    \int_0^{\infty}
    { |\nablah|}
    \left(
    e^{-|\nablah||x_3-y_3|}
    \pm
    e^{-|\nablah|(x_3+y_3)}
    \right)
    f(\xh,y_3)dy_3, \\
       Vf(x)
    :={}&
    \left(V^{(+)}+V^{(-)}\right)f(x)
    =
    \int_0^{\infty}
    {|\nablah|}
    \operatorname{sgn}(x_3-y_3)
    e^{-|\nablah||x_3-y_3|}
    f(\xh,y_3)dy_3, \\
       Wf(x)
    :={}&
    \left(W^{(+)}+W^{(-)}\right)f(x)
    = \int_0^{\infty}
    { |\nablah|}
    e^{-|\nablah||x_3-y_3|}
    f(\xh,y_3)  d y_3 .
\end{align}  
For $k=1,2$, we see the following representation on $\R^3_+$ that 
\begin{align*}
    u^{\rm w}_k(t)
    ={}&
    \int_0^t 
    e^{(t - \tau)\Deltah}
    \sum_{m=1}^3
    \partial_{x_m} 
    f_{k,m}(\tau) d\tau\\
    &
    +
    W^{(+)}
    \int_0^t 
    e^{(t - \tau)\Deltah}
    \sum_{\ell,m=1}^2
    {\partial_{x_k}{S_{x_\ell}}{S_{x_m}}}   
    f_{\ell,m}(\tau) d\tau\\
    &
    -
    V
    \int_0^t 
    e^{(t - \tau)\Deltah}
    \sum_{\ell=1}^2
    {\partial_{x_k}{S_{x_\ell}}}
    f_{3,\ell}(\tau) d\tau\\
    &
    -
    \left(1-W^{(-)}\right)
    \int_0^t 
    e^{(t - \tau)\Deltah}
    {\partial_{x_k}}   
    f_{3,3}(\tau) d\tau.
\end{align*}

Next we come to the boundary data appearing in \eqref{u3-1}-\eqref{uh-1}. It follows from \eqref{uw3} that 
\begin{align}\label{bd}  
    e^{-x_3|\nablah|}u^{\rm w}_3(t,\xh,0)
    ={}&
    T
    \int_0^t
    e^{(t-\tau)\Deltah}\left(
    \sum_{\ell=1}^2  
    \partial_{x_{\ell}}f_{3,\ell}
    +
    |\nablah|
    f_{3,3}
    \right)(\tau)d\tau,\\
    &  \quad 
   -   
    e^{  -x_3 |\nablah|}       
    \int_0^t
    e^{(t-\tau)\Deltah}
    \sum_{\ell=1}^2  
    \partial_{x_{\ell}}f_{3,\ell}
    (\tau,\xh,0) d\tau,          
 \end{align} 
with 
 \begin{align}
    T f(x)
    :={}&
    \int_0^{\infty}
    |\nablah|
    e^{-(x_3+y_3)|\nablah|}
    f(\xh,y_3)  dy_3.
\end{align}
Collecting all the material obtained so far, we arrive at
\begin{lemm}\label{lemm:sol-form}
The solution $u$ of \eqref{eq:lin-1} has the following formula:
\begin{align}             
    u_3(t,x) ={}&  
    U e ^{t \Delta_{\rm h}} (u_{0,3}- S \cdot u_{0,\rm h} )(x)\\
    &
    -
    V^{(+)}
    \int_0^t 
    e^{(t - \tau)\Deltah}
    \sum_{\ell,m=1}^2
    {\partial_{x_\ell}{S_{x_m}}}
    f_{\ell,m}(\tau) d\tau\\
    &
    -
     \left(V^{(-)}   +   T   \right)
    \int_0^t 
    e^{(t - \tau)\Deltah}
    |\nablah|
    f_{3,3}(\tau) d\tau\\
    & 
    +
     \left(  W -1-T  \right)   
    \int_0^t 
    e^{(t - \tau)\Deltah}
   \sum_{\ell=1}^{ 2 }  
    {\partial_{x_\ell}}
     f_{3,\ell}      (\tau)   d\tau, \\
      & 
   +      
    e^{  -x_3 |\nablah|}       
    \int_0^t
    e^{(t-\tau)\Deltah}
    \sum_{\ell=1}^2  
    \partial_{x_{\ell}}f_{3,\ell}
    (\tau,\xh,0) d\tau,      
\end{align}
and for $k=1,2$,
\begin{align}
    u_{k}(t,x)={}&
    e ^{ t \Delta_{\rm h} }  (u_{0,k}  + S_{x_k} u_{0,3}  ) (x)-
    S_{x_k}  U e ^{t \Delta_{\rm h}} (u_{0,3}- S \cdot u_{0,\rm h} )(x)\\
    &+
    \int_0^t 
    e^{(t - \tau)\Deltah}
    \sum_{m=1}^3
    \partial_{x_m} 
    f_{k,m}(\tau) d\tau\\
    &
    +
    W^{(+)}
    \int_0^t 
    e^{(t - \tau)\Deltah}
    \sum_{\ell,m=1}^2
    {\partial_{x_k}{S_{x_\ell}S_{x_m}}}   
    f_{\ell,m}(\tau) d\tau\\
    &
    -
    S_{x_k}(  V  -  T     )   
    \int_0^t 
    e^{(t - \tau)\Deltah}
    \sum_{\ell=1}^2
    {{\partial_{x_\ell}}}
    f_{3,\ell}(\tau) d\tau\\
    &
    -  
  \left(1    -  T   -W^{(-)}\right)
    \int_0^t 
    e^{(t - \tau)\Deltah}
    {\partial_{x_k}}   
    f_{3,3}(\tau) d\tau \\
     &  
   - S_{x_k}  
    e^{  -x_3 |\nablah|}       
    \int_0^t
    e^{(t-\tau)\Deltah}
    \sum_{\ell=1}^2  
    \partial_{x_{\ell}}f_{3,\ell}
    (\tau,\xh,0) d\tau.        
\end{align}
\end{lemm}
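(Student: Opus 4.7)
The plan is to carry out the Ukai-type reduction: split $u = u^{\rm w}+v$, where $u^{\rm w}$ solves the whole-space anisotropic Stokes problem with the symmetric/antisymmetric extension $f^{\rm w}$ of the forcing, and $v$ is the corrector that restores the no-slip condition on $\p\R^3_+$.

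For the whole-space piece, apply the Helmholtz projection and Fourier transform in all three variables. After arranging $\delta_{k\ell}-\xi_k\xi_\ell/|\xi|^2$ so as to separate the index $3$ from horizontal indices, and using the identity $\xi_3^2/|\xi|^2 = 1 - |\xih|^2/(|\xih|^2+\xi_3^2)$, every term carries a factor of either $1/(|\xih|^2+\xi_3^2)$ or $i\xi_3/(|\xih|^2+\xi_3^2)$, whose inverse Fourier transforms in $\xi_3$ are $(2|\xih|)^{-1}e^{-|\xih||x_3|}$ and $-\tfrac12\operatorname{sgn}(x_3)e^{-|\xih||x_3|}$, respectively. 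Restricting to $x_3>0$ and folding the $y_3$-integration onto $(0,\infty)$ through the parity of $f^{\rm w}$ in $y_3$ then produces the bulk operators $V^{(\pm)}$ and $W^{(\pm)}$.

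For the corrector $v$, apply $\div$ to its equation to obtain $\Delta q=0$; after the horizontal Fourier transform, selecting the decaying mode in $x_3$ gives $(|\nablah|+\p_3)\mathscr{F}_{\R^2}[q]=0$. Consequently $w_3:=(|\nablah|+\p_3)v_3$ is pressure-free and satisfies the 2D heat equation, so $v_3$ solves a first-order ODE in $x_3$ with trace $-u^{\rm w}_3|_{x_3=0}$; integrating this ODE yields the $U$-term plus the boundary layer $e^{-x_3|\nablah|}u^{\rm w}_3(\cdot,0)$. For $v_{\rm h}$, the combination $w_{\rm h}:=v_{\rm h}+Sv_3$ also annihilates the pressure since $\nablah q + S\p_3 q = S(|\nablah|+\p_3)q=0$, and therefore evolves under the planar heat semigroup with datum $u_{0,{\rm h}}+Su_{0,3}$.

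Finally, add $u^{\rm w}$ and $v$ and substitute the explicit formula for $u^{\rm w}_3|_{x_3=0}$. The factorization $|\nablah|e^{-(x_3+y_3)|\nablah|}=e^{-x_3|\nablah|}\bigl(|\nablah|e^{-y_3|\nablah|}\bigr)$ converts the $x_3=0$ trace of $V^{(-)}$ and $W$ into the operator $T$, while the contribution from the $W-1$ piece leaves behind the residual boundary term $e^{-x_3|\nablah|}\int_0^t e^{(t-\tau)\Deltah}\sum_\ell\p_{x_\ell}f_{3,\ell}(\tau,\xh,0)\,d\tau$ that cannot be absorbed. The main obstacle I anticipate is sign bookkeeping: tracking the $\pm$ in $V^{(\pm)},W^{(\pm)}$ coming from the parity of $f^{\rm w}$ in $y_3$ combined with the $\operatorname{sgn}$ factor from the inverse Fourier transform, and disentangling the pure boundary trace from the $T$-operator in the final assembly, since both stem from evaluating the same kernel $|\nablah|e^{-(x_3+y_3)|\nablah|}$ but must be separated to match the stated formula.
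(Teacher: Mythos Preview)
Your proposal is correct and follows essentially the same route as the paper's own proof: the Ukai splitting $u=u^{\rm w}+v$ with the parity extension $f^{\rm w}$, the pressure-annihilating combinations $w_3=(|\nablah|+\p_3)v_3$ and $w_{\rm h}=v_{\rm h}+Sv_3$, the inverse Fourier transforms of $(|\xih|^2+\xi_3^2)^{-1}$ and $i\xi_3(|\xih|^2+\xi_3^2)^{-1}$ in $\xi_3$ followed by the parity fold to produce $V^{(\pm)},W^{(\pm)}$, and the evaluation of $e^{-x_3|\nablah|}u^{\rm w}_3(\cdot,0)$ that yields $T$ plus the residual trace term. The sign bookkeeping you flag is exactly where the paper spends its effort, so your anticipated obstacle is the right one.
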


Substituting $-u \otimes u$ into $f$ and invoking the boundary condition of velocity, we obtain an explicit solution formula to the nonlinear equations. 
\begin{cor}\label{lemm:sol-form-u}
The solution $u$ of \eqref{eq1} has the following formula:
\begin{align}             
    u_3(t,x) ={}&  
    U e ^{t \Delta_{\rm h}} (u_{0,3}- S \cdot u_{0,\rm h} )(x)\\
    &
    +
    V^{(+)}
    \int_0^t 
    e^{(t - \tau)\Deltah}
    \sum_{\ell,m=1}^2
    {\partial_{x_\ell}{S_{x_m}}}
    (u_{\ell}u_m)(\tau) d\tau\\
    &{}
    + 
   \left(V^{(-)} +  T \right)
    \int_0^t 
    e^{(t - \tau)\Deltah}
    |\nablah|
    (u_3(\tau)^2) d\tau\\
    &
    -
    \left( 
    W -1-T  \right)     
    \int_0^t 
    e^{(t - \tau)\Deltah}
   \div_{\rm h}
    (\uh u_3)(\tau)     d\tau    \\
    =: {}&  
    U e ^{t \Delta_{\rm h}} (u_{0,3}- S \cdot u_{0,\rm h} )(x) + \sum_{m=1}^{3} \mathcal{D}_m^{\rm v}[u](t) 
\end{align}
and
\begin{align}
    u_{\rm h}(t,x)={}&
    e ^{ t \Delta_{\rm h} }  (u_{0,\rm h}  + S u_{0,3}  ) (x)-
    SU e ^{t \Delta_{\rm h}} (u_{0,3}- S \cdot u_{0,\rm h} )(x)\\
    &-
    \int_0^t 
    e^{(t - \tau)\Deltah}
    \partial_{x_3} 
    (\uh u_{3})(\tau) d\tau\\
    &-
    \int_0^t 
    e^{(t - \tau)\Deltah}
    \div_{\rm h}
    (\uh \otimes \uh)(\tau) d\tau\\
    &
    -
    W^{(+)}
    \int_0^t 
    e^{(t - \tau)\Deltah}
    \nablah
    \sum_{\ell,m=1}^2
    {S_{x_\ell}S_{x_m}}   
    (u_{\ell}u_{m})(\tau) d\tau\\
    &
    +
     S(V - T)
    \int_0^t 
    e^{(t - \tau)\Deltah}
    \div_{\rm h}
    (\uh u_3)(\tau) d\tau\\
    &     
    +\left(1  - T  -W^{(-)}\right)
    \int_0^t 
    e^{(t - \tau)\Deltah}
    \nablah   
    (u_3(\tau)^2) d\tau \\
  =: {}&
    e ^{ t \Delta_{\rm h} }  (u_{0,\rm h}  + S u_{0,3}  ) (x)-
    SU e ^{t \Delta_{\rm h}} (u_{0,3}- S \cdot u_{0,\rm h} )(x)   +\sum_{m=1}^{5} \mathcal{D}_m^{\rm h}[u](t) .  
\end{align}
\end{cor}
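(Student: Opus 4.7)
The plan is to invoke Lemma \ref{lemm:sol-form} with the external force $f_{k,\ell} := -u_k u_{\ell}$. This tensor is symmetric in $(k,\ell)$ (so Lemma \ref{lemm:sol-form} applies directly), and since $\div u = 0$ we have $\div(u\otimes u) = (u \cdot \nabla) u$, so $u$ genuinely solves \eqref{eq:lin-1} with this choice of $f$. The whole corollary will then follow from a componentwise substitution together with one geometric observation: the no-slip condition $u|_{\partial \mathbb{R}^3_+} = 0$ annihilates the boundary-lift contribution.

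For the formula for $u_3$, I would substitute $f_{\ell,m} = -u_\ell u_m$ into each of the four Duhamel pieces provided by Lemma \ref{lemm:sol-form}; every one of them picks up a single overall sign flip. In the $(W-1-T)$ piece one rewrites $\sum_{\ell=1}^{2} \partial_{x_\ell}(u_3 u_\ell) = \div_{\rm h}(\uh u_3)$. The decisive term is the last one, $e^{-x_3|\nablah|}\int_0^{t} e^{(t-\tau)\Deltah} \sum_{\ell=1}^{2} \partial_{x_\ell} f_{3,\ell}(\tau, \xh, 0)\,d\tau$: its integrand carries the boundary trace $f_{3,\ell}(\tau, \xh, 0) = -u_3 u_\ell|_{x_3 = 0}$, which vanishes identically thanks to $u|_{\partial \mathbb{R}^3_+} = 0$. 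So the boundary lift drops out, leaving precisely the three pieces $\mathcal{D}_m^{\rm v}[u]$ in the claimed order.

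For $u_k$ with $k = 1, 2$, the same substitution and the same trace argument apply, so again the $-S_{x_k} e^{-x_3|\nablah|}(\cdots)$ boundary-lift term vanishes. The only additional bookkeeping is to split $\sum_{m=1}^{3} \partial_{x_m}(u_k u_m) = \partial_{x_3}(u_k u_3) + \sum_{m=1}^{2}\partial_{x_m}(u_k u_m)$: the first summand gives the Duhamel integral of $\partial_{x_3}(\uh u_3)$, while the second is the $k$-th component of $\div_{\rm h}(\uh \otimes \uh)$. The three remaining Duhamel pieces (with operators $W^{(+)}$, $S_{x_k}(V-T)$, and $1 - T - W^{(-)}$) transform exactly as in the $u_3$ analysis after a single sign flip, and regrouping produces the five terms $\mathcal{D}_m^{\rm h}[u]$.

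There is no serious obstacle: the corollary is essentially a clean sign-tracking and regrouping exercise applied on top of Lemma \ref{lemm:sol-form}. The substantive content is the single observation that $f_{3,\ell}|_{x_3=0}$ vanishes because it is quadratic in the boundary trace of $u$, so the nonlocal boundary lift — the most delicate ingredient of the linear representation — disappears entirely in the nonlinear setting. The only care one must take is to keep track of the single minus sign carried by each occurrence of $f$, and to note that $-u \otimes u$ is symmetric so that Lemma \ref{lemm:sol-form} is invoked without modification.
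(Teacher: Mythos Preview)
Your proposal is correct and follows exactly the paper's approach: the paper's own justification is the single sentence ``Substituting $-u \otimes u$ into $f$ and invoking the boundary condition of velocity, we obtain an explicit solution formula to the nonlinear equations,'' and your write-up simply spells out that substitution and the boundary-trace cancellation in detail.
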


\subsection{Decay estimates for the linearized equations} 
In this subsection, we focus on the anisotropic Stokes equations by dropping the convective term:   
\begin{align}\label{st1}
\begin{cases}
    \partial_t u -  \Delta_{\rm h} u + \nabla p = 0, & \qquad t>0,x\in \mathbb{R}^3_{+},\\
    \div  u = 0, & \qquad t\geq0, x \in\mathbb{R}^3_{+},\\
    u=0, & \qquad t \geq 0, x \in  \p \mathbb{R}^3_{+} ,\\
    u(0,x) = u_0(x), & \qquad x \in \mathbb{R}^3_{+}.
\end{cases} 
\end{align}
By Lemma \ref{lemm:sol-form}, we see that
\begin{align}             
    u_3(t) ={} &   U e ^{t \Delta_{\rm h}} (u_{0,3}- S \cdot u_{0,\rm h} ) ,   \label{sol-form-3} \\
    u_{\rm h}(t)={} &    e ^{ t \Delta_{\rm h} }  (u_{0,\rm h}  + S u_{0,3}  )  -S U e ^{t \Delta_{\rm h}} (u_{0,3}- S \cdot u_{0,\rm h} ).     \label{sol-form-h}
\end{align}

With the solution formula at hand, we begin with the boundedness of operator $U$ and the Riesz operator $S$ in the Chemin--Lerner type space. 
\begin{lemm}\label{lemm-U}
Let $1 \leq p,q,\sigma \leq \infty$. Then, there exists a positive constant $C=C(p,q)$ such that 
\[
\| U f \|_{\mathcal{L}_{\sigma}^{p,q} (\mathbb{R}^3_+) } \leq C \| f \|_{\mathcal{L}_{\sigma}^{p,q} (\mathbb{R}^3_+) } \,\,\,\,\text{  for all   }\,\,  f \in \mathcal{L}_{\sigma}^{p,q}(\mathbb{R}^3_+).
\]
{
Besides, there exists a positive constant $C=C(p)$ such that
\[
\|  S f \|_{\mathcal{L}_{\sigma}^{p,q} (\mathbb{R}^3_+) }   \leq C\| f \|_{\mathcal{L}_{\sigma}^{p,q} (\mathbb{R}^3_+) }   \,\,\,\,\text{  for all   }\,\,  f \in \mathcal{L}_{\sigma}^{p,q}(\mathbb{R}^3_+).
\]
}
\end{lemm}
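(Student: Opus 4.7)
The plan is to treat the two operators independently, exploiting that both $S$ and $U$ commute with the horizontal Littlewood--Paley projectors $\Delta_j^{\rm h}$, and then to apply Young's inequality dyadic block by dyadic block. I would begin with the Riesz transform $S$ since it is simpler, and then adapt the same philosophy to $U$.

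For $S$, I would observe that $\Delta_j^{\rm h} S f = S \Delta_j^{\rm h} f = \mathscr{F}_{\R^2}^{-1}\bigl[i\xi_{\rm h}/|\xi_{\rm h}| \cdot \widetilde{\varphi}(2^{-j}\xi_{\rm h})\bigr] \ast_{\xh} \Delta_j^{\rm h} f$, where $\widetilde{\varphi}$ is a smooth cutoff equal to $1$ on the support of the Littlewood--Paley profile $\varphi$. A scaling argument shows that the $L^1(\R^2)$-norm of this convolution kernel is independent of $j$; Young's inequality in $\xh$ then yields $\|\Delta_j^{\rm h} S f(\cdot,x_3)\|_{L^p_{\xh}} \leq C \|\Delta_j^{\rm h} f(\cdot,x_3)\|_{L^p_{\xh}}$ uniformly in $j$ and $x_3$ for every $1 \leq p \leq \infty$. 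Taking $L^q_{x_3}$ and then $\ell^{\sigma}_j$ finishes this half.

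For $U$, I would similarly write, using that $U$ acts as a Fourier multiplier in $\xh$,
\begin{align}
\Delta_j^{\rm h} U f(\xh, x_3) = \int_0^{x_3} \bigl( K_{j, x_3 - y_3} \ast_{\xh} \Delta_j^{\rm h} f(\cdot, y_3)\bigr)(\xh)\, dy_3,
\end{align}
where $K_{j,t}(\xh) := \mathscr{F}_{\R^2}^{-1}\bigl[|\xi_{\rm h}| e^{-t|\xi_{\rm h}|} \widetilde{\varphi}(2^{-j}\xi_{\rm h})\bigr](\xh)$. By the change of variables $\eta = 2^{-j}\xi_{\rm h}$ (and using that $\widetilde{\varphi}$ is supported in an annulus bounded away from $0$, which produces the exponential decay in $2^j t$), one gets $\|K_{j,t}\|_{L^1(\R^2)} \leq C 2^{j} e^{-c 2^{j} t}$ for some $c>0$. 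Young's inequality in $\xh$ gives
\begin{align}
\|\Delta_j^{\rm h} U f(\cdot, x_3)\|_{L^p_{\xh}} \leq C \int_0^{x_3} 2^{j} e^{-c 2^{j}(x_3-y_3)} \|\Delta_j^{\rm h} f(\cdot, y_3)\|_{L^p_{\xh}}\, dy_3.
\end{align}
The right-hand side is a one-dimensional convolution of $\|\Delta_j^{\rm h} f(\cdot, y_3)\|_{L^p_{\xh}}$ (extended by zero to $y_3<0$) with the kernel $2^{j} e^{-c 2^{j} x_3} \mathbf{1}_{x_3 \geq 0}$, whose $L^1(\R)$-norm equals $C/c$ uniformly in $j$. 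A second application of Young's inequality in $x_3$ then gives $\|\Delta_j^{\rm h} U f\|_{L^q_{x_3}(L^p_{\xh})} \leq C \|\Delta_j^{\rm h} f\|_{L^q_{x_3}(L^p_{\xh})}$, uniformly in $j$, and taking $\ell^{\sigma}_j$ concludes the proof.

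The main point that deserves care is the dyadic estimate $\|K_{j,t}\|_{L^1(\R^2)} \leq C 2^{j} e^{-c 2^{j} t}$: it is precisely the combination of the prefactor $2^{j}$ (from the symbol $|\xi_{\rm h}|$) and the exponential decay $e^{-c 2^{j} t}$ (from the heat-type factor restricted to an annulus) that makes the one-dimensional kernel $2^{j} e^{-c 2^{j} x_3}$ integrable with a $j$-independent $L^1$-norm. This scale-invariant bound is the only nontrivial ingredient; once it is in hand, the rest is routine convolution analysis.
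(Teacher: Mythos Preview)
Your proposal is correct and follows essentially the same approach as the paper: commute with $\Delta_j^{\rm h}$, obtain the dyadic bound $\|\Delta_j^{\rm h} Uf(\cdot,x_3)\|_{L^p_{\xh}} \leq C\int_0^{x_3} 2^j e^{-c2^j(x_3-y_3)}\|\Delta_j^{\rm h} f(\cdot,y_3)\|_{L^p_{\xh}}\,dy_3$, apply Young's (Hausdorff--Young) inequality in $x_3$, and take $\ell^{\sigma}_j$. The only cosmetic difference is that the paper packages the horizontal multiplier bounds (for $S$ and for $|\nablah|e^{-t|\nablah|}$ localized to a dyadic annulus) by citing the standard Fourier multiplier lemma (Lemma~\ref{app-lemm-2}), whereas you unfold that lemma explicitly via scaling of the convolution kernel; the substance is identical.
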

\begin{rem}
It is known that $U$ is bounded on $L^r(\mathbb{R}^3_+)$ for all $1< r < \infty$ by the standard theory of singular integral operators.
In contrast, we reveal that $U$ is bounded in $\mathcal{L}^{p,q}_{\sigma}(\mathbb{R}^3_+)$ in the end point cases $p=q=1$ or $p=q=\infty$.
\end{rem}
\begin{proof}[Proof of Lemma \ref{lemm-U}]
Since it holds 
\begin{align*}
    Uf(x)
    =
    |\nablah|\int_0^{x_3} e^{-|\nablah|(x_3-y_3)}f(\xh,y_3)dy_3,
\end{align*}
we infer that
\begin{align*}
    \| \Deltahh_j Uf \|_{L^p(\mathbb{R}^2_{\xh})}
    \leq
    C2^j
    \int_0^{x_3}
    e^{-c2^j(x_3-y_3)}
    \| \Deltahh_j f(\cdot,y_3)\|_{L^p(\mathbb{R}^2_{\xh})}
    dy_3.
\end{align*}
Taking $L^q(\mathbb{R}_{x_3>0})$ norm of the above relation and using the Hausdorff--Young inequality yields that 
\begin{align*}
    \| \Deltahh_j Uf \|_{\Lv^q\Lh^p(\mathbb{R}^3_+)}
    \leq
    C2^j \int_0^{\infty} e^{-c2^jy_3}dy_3
    \| \Deltahh_j f\|_{\Lv^q\Lh^p(\mathbb{R}^3_+)}
    =
    C
    \| \Deltahh_j f\|_{\Lv^q\Lh^p(\mathbb{R}^3_+)},
\end{align*}
which provides the first estimate upon taking the $\ell^{\sigma}$-norm over the index $j \in \mathbb{Z}$.

The proof of the boundedness for the Riesz operator $S$ is similar as soon as one uses Lemma \ref{app-lemm-2} to see
\[
\|   \Deltahh_j Sf  \|_{L^p(\mathbb{R}^2_{\xh})} \leq C
\|   \Deltahh_j f  \|_{L^p(\mathbb{R}^2_{\xh})} . 
\]
This completes the proof.
\end{proof}

Next, we establish the decay estimates for the anisotropic heat semigroup $\{ e^{t\Deltah}\}_{t>0}$ in the Chemin--Lerner type space.

\begin{lemm}\label{lemm-heat}
Let $1 \leq p\leq q \leq \infty$, $1 \leq r, \sigma \leq \infty$, and $\alpha = (\alphah, \alpha_3) \in (\mathbb{N} \cup \{0 \})^2 \times (\mathbb{N} \cup \{0 \})$. 
Then, there exists a positive constant $C=C(p,q,\alpha)$ such that 
\begin{align*}
    \| \nabla^{\alpha} e^{t\Deltah} f \|_{\mathcal{L}_{\sigma}^{q,r} (\mathbb{R}^3_+) } 
    \leq 
    C t^{-(\frac{1}{p}-\frac{1}{q}) - \frac{|\alphah|}{2}}
    \| \p_3^{\alpha_3} f \|_{\mathcal{L}_{\sigma}^{p,r} (\mathbb{R}^3_+)  }
\end{align*}
for all $t>0$ and $f$ such that $\p_3^{\alpha_3} f \in \mathcal{L}_{\sigma}^{p,r}(\mathbb{R}^3_+)$.

In particular, if $(1/p-1/q)+|\alphah|/2 > 0$, then there exists a positive constant $C=C(p,q,\sigma,\alpha)$ such that 
\begin{align*}
    \| \nabla^{\alpha} e^{t\Deltah} f \|_{\mathcal{L}_1^{q,r} (\mathbb{R}^3_+) } 
    \leq 
    C t^{-(\frac{1}{p}-\frac{1}{q})-\frac{|\alphah|}{2}}
    \| \p_3^{\alpha_3} f \|_{\mathcal{L}_{\infty}^{p,r} (\mathbb{R}^3_+) }
\end{align*}
for all $t>0$ and $f$ such that $\p_3^{\alpha_3} f \in \mathcal{L}_{\infty}^{p,r}(\mathbb{R}^3_+)$. 
\end{lemm}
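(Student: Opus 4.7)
The plan is to exploit the fact that the horizontal heat semigroup $e^{t\Deltah}$, the horizontal derivatives $\nablah^{\alphah}$, and each horizontal dyadic block $\Deltahh_j$ act only on the tangential variables, while $\p_3^{\alphav}$ acts only on $x_3$. In particular, all of these operators commute, and for every fixed $j \in \mathbb{Z}$ and $x_3 > 0$ we obtain
\begin{align}
\Deltahh_j \nabla^{\alpha} e^{t\Deltah} f(\cdot,x_3)
= \nablah^{\alphah} e^{t\Deltah} \Deltahh_j \bigl(\p_3^{\alphav} f\bigr)(\cdot,x_3).
\end{align}
I would then combine the standard frequency-localized heat kernel estimate with Bernstein's inequality (both of which are collected in the appendix) to obtain, for each fixed $x_3$,
\begin{align}
\bigl\| \nablah^{\alphah} e^{t\Deltah} \Deltahh_j g \bigr\|_{L^q(\R^2_{\xh})}
\leq C\, 2^{j(|\alphah| + 2(\frac{1}{p}-\frac{1}{q}))} e^{-c t\, 2^{2j}}
\bigl\| \Deltahh_j g \bigr\|_{L^p(\R^2_{\xh})}.
\end{align}

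Taking the $L^r(\R_{x_3>0})$ norm of both sides (this is allowed since the constants $C,c$ depend on neither $x_3$ nor $j$) yields, with $a := |\alphah| + 2\bigl(\tfrac{1}{p} - \tfrac{1}{q}\bigr) \geq 0$,
\begin{align}
\bigl\| \Deltahh_j \nabla^{\alpha} e^{t\Deltah} f \bigr\|_{\Lv^r\Lh^q(\R^3_+)}
\leq C\, 2^{ja} e^{-c t\, 2^{2j}}
\bigl\| \Deltahh_j \p_3^{\alphav} f \bigr\|_{\Lv^r\Lh^p(\R^3_+)}.
\end{align}
For the first estimate I would then invoke the elementary bound $\sup_{j \in \mathbb{Z}} 2^{ja} e^{-c t\, 2^{2j}} \leq C_a\, t^{-a/2}$ (which follows from $x^{a/2}e^{-cx} \leq C_a$ for $x \geq 0$), pull this uniform factor out, and conclude by taking the $\ell^{\sigma}$ norm in $j$.

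For the second estimate, I would instead sum the displayed bound over $j$ and argue
\begin{align}
\sum_{j\in\mathbb{Z}} 2^{ja} e^{-ct\,2^{2j}}
\leq C\, t^{-a/2},
\end{align}
which converts $\ell^{\infty}_j$ on the right-hand side into $\ell^{1}_j$ on the left. This is where the assumption $(\tfrac{1}{p}-\tfrac{1}{q}) + \tfrac{|\alphah|}{2} > 0$ (equivalently $a>0$) is used: the high-frequency end is always controlled by the exponential decay, but the low-frequency end (where $2^{2j}t \ll 1$, so $e^{-ct 2^{2j}} \approx 1$) requires $2^{ja}$ to be summable as $j \to -\infty$. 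The exact exponent $t^{-a/2}$ is obtained by the standard scaling/substitution $2^{2j}t \mapsto u$ and comparing the dyadic sum to $\int_0^{\infty} u^{a/2-1} e^{-u}\,du$. The main (and only) obstacle is this convergence at low horizontal frequencies, which is precisely the content of the additional hypothesis in the second part of the lemma.
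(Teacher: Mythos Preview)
Your proof is correct and follows essentially the same route as the paper: for each dyadic block you combine the frequency-localized heat smoothing $e^{-ct2^{2j}}$ with Bernstein's inequality, take the $L^r_{x_3}$ norm, and then either bound $2^{ja}e^{-ct2^{2j}}$ uniformly in $j$ (first estimate) or sum it over $j$ using $a>0$ (second estimate). The only cosmetic difference is that, for the first estimate, the paper applies the classical $L^p$--$L^q$ heat-kernel estimate directly on each block (obtaining the $t^{-(1/p-1/q)-|\alphah|/2}$ factor in one step) rather than passing through the $j$-dependent factor and its supremum, but this is the same argument in different packaging.
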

\begin{proof}
It follows from the standard $L^p-L^q$ estimates for the heat semigroup $\{ e^{t\Deltah}\}_{t>0}$ that 
\begin{align*}
    \| \Deltahh_j \nabla^{\alpha} e^{t\Deltah}f \|_{\Lv^r\Lh^q (\mathbb{R}^3_+) } \leq C t^{-(\frac{1}{p} - \frac{1}{q})-\frac{|\alphah|}{2}} \| \Deltahh_j \partial_3^{\alpha_3} f \|_{\Lv^r \Lh^p (\mathbb{R}^3_+) }.
\end{align*}
Taking $\ell^{\sigma}(\mathbb{Z})$ with respect to $j$, we obtain the first estimate.

Next, we see that
\begin{align*}
    \| \nabla^{\alpha} e^{t\Deltah} f \|_{\mathcal{L}_1^{q,r} (\mathbb{R}^3_+) } 
    ={}&
    \sum_{j \in \mathbb{Z}}
    \| \Deltahh_j \nabla^{\alpha} e^{t\Deltah} f \|_{\Lv^r \Lh^q (\mathbb{R}^3_+) }\\
    \leq{}& 
    C
    \sum_{j \in \mathbb{Z}}
    2^{|\alphah|j}
    e^{-c2^{2j}t}
    \| \Deltahh_j \p_3^{\alpha_3} f \|_{\Lv^r \Lh^q (\mathbb{R}^3_+) }\\
    \leq{}&
    C
    \sum_{j \in \mathbb{Z}}
    2^{2\{(\frac{1}{p}-\frac{1}{q})+\frac{|\alphah|}{2}\}j}
    e^{-c2^{2j}t}
    \| \Deltahh_j \p_3^{\alpha_3} f \|_{\Lv^r \Lh^p (\mathbb{R}^3_+) }\\
    \leq{}&
    C
    \left(
    \sum_{j \in \mathbb{Z}}
    2^{2\{(\frac{1}{p}-\frac{1}{q})+\frac{|\alphah|}{2}\}j}
    e^{-c2^{2j}t}
    \right)
    \sup_{j\in \mathbb{Z}}\| \Deltahh_j \p_3^{\alpha_3} f \|_{\Lv^r \Lh^p (\mathbb{R}^3_+) }\\
    \leq{}&
    C t^{-(\frac{1}{p}-\frac{1}{q})-\frac{|\alphah|}{2}}
    \| \p_3^{\alpha_3} f \|_{\mathcal{L}_{\infty}^{p,r} (\mathbb{R}^3_+) },
\end{align*}
which completes the proof. Here, we used in the last step the simple fact that for any positive $s$
\begin{align*}
\sup_{t>0} \sum_{j \in \mathbb{Z}} t^s 2^{2js} e ^{  -ct 2^{2j} } <\infty .
\end{align*}
\end{proof}

As a corollary of Lemmas \ref{lemm-U} and \ref{lemm-heat}, we obtain the $\mathcal{L}^{p,r}_{\sigma}-\mathcal{L}^{q,r}_{\sigma}$ type estimates for the semigroup $\{ e^{-tA} \}_{t>0} $.
\begin{cor}\label{cor:semigr}
Let $1 \leq p \leq q \leq \infty$, $1 \leq r,\sigma \leq \infty$, and $\alpha = (\alphah, \alpha_3) \in (\mathbb{N} \cup \{0 \})^2 \times (\mathbb{N} \cup \{0 \})$. 
Then, there exists a positive constant $C=C(p,q,\alpha)$ such that 
\begin{align*}
    \| \nabla^{\alpha} e^{-tA}u_0 \|_{\mathcal{L}_{\sigma}^{q,r}(\mathbb{R}^3_+)}
    \leq
    C
    t^{-(\frac{1}{p}-\frac{1}{q})-\frac{|\alphah|}{2}}
    \| \p_3^{\alpha_3} u_0 \|_{\mathcal{L}_{\sigma}^{p,r}(\mathbb{R}^3_+) }
\end{align*}
for all $u_0$ with $\p_3^{\alpha_3} u_0 \in\mathcal{L}_{\sigma}^{p,r}(\mathbb{R}^3_+),\div u_0 = 0$.
\end{cor}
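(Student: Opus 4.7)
The plan is to substitute the explicit representations \eqref{sol-form-3}--\eqref{sol-form-h} for $e^{-tA}u_0$ and then combine the boundedness results of Lemma \ref{lemm-U} with the anisotropic heat decay of Lemma \ref{lemm-heat}. I treat the two relevant cases $\alpha_3=0$ and $\alpha_3=1$ separately, these being the ones that appear in the applications later in the paper.

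For $\alpha_3=0$ the operators $\nablah^{\alphah}$, $U$, $S$, and $e^{t\Deltah}$ mutually commute. Writing, for instance, $\nablah^{\alphah} u_3(t) = U\,\nablah^{\alphah} e^{t\Deltah}(u_{0,3} - S\cdot u_{0,\rm h})$, Lemma \ref{lemm-U} disposes of $U$, Lemma \ref{lemm-heat} produces the factor $t^{-(1/p-1/q)-|\alphah|/2}$, and Lemma \ref{lemm-U} is invoked once more to absorb the Riesz factor $S$. The same strategy treats each summand in the formula for $\uh$.

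The case $\alpha_3=1$ is more delicate since $\partial_3$ does not commute with $U$. The key point is obtained by integrating by parts in $y_3$ inside the definition of $U$, using $\partial_{y_3}e^{-(x_3-y_3)|\nablah|}=|\nablah|\,e^{-(x_3-y_3)|\nablah|}$:
\begin{equation*}
Uf(x)= f(x) - e^{-x_3|\nablah|} f(\xh,0) - \widetilde{U}[\partial_3 f](x),\qquad \widetilde{U}g(x):=\int_0^{x_3} e^{-(x_3-y_3)|\nablah|} g(\xh,y_3)\,dy_3.
\end{equation*}
Apply this identity with $f=e^{t\Deltah}g$ and $g:=u_{0,3}-S\cdot u_{0,\rm h}$. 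The Stokes boundary condition $u_0|_{\p\R^3_+}=0$ (inherent in the setting of \eqref{st1}) forces $g(\cdot,0)=0$, so the boundary contribution vanishes. Differentiating in $x_3$ and using the elementary identity $\partial_3 \widetilde{U} h = h - |\nablah|\widetilde{U}h$, a clean cancellation yields the commutation formula
\begin{equation*}
\partial_3 U e^{t\Deltah} g = |\nablah|\,\widetilde{U}\,e^{t\Deltah}\partial_3 g.
\end{equation*}
A Young-type argument at the dyadic-block level, parallel to the proof of Lemma \ref{lemm-U}, shows that $|\nablah|\widetilde{U}$ is bounded on every $\mathcal{L}^{p,r}_\sigma(\R^3_+)$: at frequency $2^j$ the factor $2^j$ from $|\nablah|$ is exactly compensated by the $2^{-j}$ coming from $\int_0^\infty e^{-c 2^j s}\,ds$. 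Combining this boundedness with Lemma \ref{lemm-heat} applied to $\partial_3 g$ (with $\alpha_3=0$ in the statement of the lemma) gives the desired decay for $\partial_3 u_3$. For $\partial_3 \uh$, the summand $\partial_3 e^{t\Deltah}(u_{0,\rm h}+Su_{0,3}) = e^{t\Deltah}(\partial_3 u_{0,\rm h}+S\partial_3 u_{0,3})$ is handled directly by Lemma \ref{lemm-heat}, whereas $\partial_3 SU e^{t\Deltah}(u_{0,3}-S\cdot u_{0,\rm h})$ is handled exactly as above, the extra outer $S$ being absorbed via Lemma \ref{lemm-U}.

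The main technical obstacle is thus the derivation of the commutator identity $\partial_3 U e^{t\Deltah} g = |\nablah|\widetilde{U} e^{t\Deltah}\partial_3 g$ and the boundedness of the auxiliary operator $|\nablah|\widetilde{U}$; once these are in place, the conclusion follows by direct bookkeeping.
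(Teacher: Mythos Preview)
Your argument is correct and is precisely the kind of proof the paper has in mind; the paper itself offers no details beyond calling the result a corollary of Lemmas~\ref{lemm-U} and~\ref{lemm-heat}. Two brief remarks. First, your auxiliary operator satisfies $|\nablah|\widetilde U = U$ by definition, so the ``Young-type argument'' you sketch is literally Lemma~\ref{lemm-U} again and your commutation identity reads simply $\partial_3 U f = U\,\partial_3 f$ whenever $f(\cdot,0)=0$. Second, your observation that the case $\alpha_3\geq 1$ requires the compatibility condition $u_0|_{\partial\R^3_+}=0$ is correct and is a point the paper glosses over in the statement of the corollary; it is however satisfied in every application (cf.\ the hypotheses of Theorem~\ref{main-theorem} and Lemma~\ref{strong-sol-2}).
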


The following lemma is the main achievement of this section, revealing the enhanced dissipation mechanism for the third component of velocity field. 
\begin{lemm}\label{lemm-enh-decay}
    Let $1 \leq p,q \leq \infty$  with $(p,q)\neq (1,1)$ and $1 < r \leq \infty$.
    Then, there exists a positive constant $C=C(p,q,r)$ such that 
    \begin{align}\label{enh-decay-p}
        \begin{split}
        \| (e^{-tA}u_{0})_{\rm h} \|_{\mathcal{L}^{r,q}_1(\mathbb{R}^3_+)}
        &
        \leq
        Ct^{-(1-\frac{1}{r})}\| u_0 \|_{\mathcal{L}^{1,q}_{\infty}(\mathbb{R}^3_+)}
        +
        Ct^{-(1-\frac{1}{r})-\frac{1}{2}(1-\frac{1}{q})}\| u_0 \|_{{\mathcal{L}^{1}_{\infty}(\mathbb{R}^3_+)}}\\
        \| (e^{-tA}u_{0})_3 \|_{\mathcal{L}^{p,q}_1(\mathbb{R}^3_+)}
        &
        \leq
        Ct^{-(1-\frac{1}{p})-\frac{1}{2}(1-\frac{1}{q})}\| u_0 \|_{{\mathcal{L}^{1}_{\infty}(\mathbb{R}^3_+)}}
        \end{split}
    \end{align}
    for all {$u_0 $} with $\div u_0=0$.

    Moreover, it holds for $1 < q \leq \infty$
    \begin{align}\label{enh-decay-1}
        \begin{split}
        \| e^{-tA}u_{0} \|_{\mathcal{L}^{1,q}_{\sigma}(\mathbb{R}^3_+)}
        &
        \leq
        C\| u_0 \|_{\mathcal{L}^{1,q}_{\sigma}(\mathbb{R}^3_+)}
        +
        Ct^{-\frac{1}{2}(1-\frac{1}{q})}
        \| u_0 \|_{{\mathcal{L}^{1}_{\infty}(\mathbb{R}^3_+)}}\\
        \| e^{-tA}u_{0} \|_{\mathcal{L}^1_{\sigma}(\mathbb{R}^3_+)}
        &
        \leq
        C
        \| u_0 \|_{{\mathcal{L}^1_{\sigma}(\mathbb{R}^3_+)}}
        \end{split}
    \end{align}
    for all $1\leq \sigma \leq \infty$ and {$u_0$} with $\div u_0=0$.
\end{lemm}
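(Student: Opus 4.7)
The strategy is to exploit the explicit formulas \eqref{sol-form-3}--\eqref{sol-form-h} together with Lemmas \ref{lemm-U}--\ref{lemm-heat}, supplemented by a single new dyadic smoothing estimate for $U$ that is responsible for the enhanced vertical decay. The key observation I would establish first is that at horizontal frequency $|\xih|\sim 2^j$ the operator $U$ acts essentially as convolution in $x_3$ against the kernel $2^j e^{-2^j(x_3-y_3)}\mathbf{1}_{\{0\leq y_3\leq x_3\}}$, so Young's inequality in $x_3$ yields
\begin{align*}
\|\Deltahh_j U f\|_{L^q_{x_3}L^p_{\xh}}
\leq C\,2^{j(1-\tfrac{1}{q})}\|\Deltahh_j f\|_{L^1_{x_3}L^p_{\xh}}
\end{align*}
for all $1\leq p,q\leq \infty$. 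The prefactor $2^{j(1-1/q)}$ is precisely what upgrades the vertical integrability and, after summation against $e^{-c2^{2j}t}$, produces the $\frac{1}{2}(1-\frac{1}{q})$ gain in $t$.

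To prove \eqref{enh-decay-p}, I would insert this dyadic bound into \eqref{sol-form-3}, combine it with the standard horizontal heat estimate $\|\Deltahh_j e^{t\Deltah} g\|_{L^p_{\xh}}\lesssim 2^{2j(1-\frac{1}{p})}e^{-c2^{2j}t}\|\Deltahh_j g\|_{L^1_{\xh}}$, and absorb the Riesz transforms via Lemma \ref{lemm-U}. Summing over $j$ by means of $\sum_{j\in\mathbb{Z}}2^{2js}e^{-c2^{2j}t}\lesssim t^{-s}$ for $s>0$ then gives the rate $t^{-(1-\frac{1}{p})-\frac{1}{2}(1-\frac{1}{q})}$ for $(e^{-tA}u_0)_3$. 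The horizontal estimate in \eqref{enh-decay-p} is obtained by treating the two pieces of \eqref{sol-form-h} separately: the purely heat-driven contribution $e^{t\Deltah}(u_{0,\rm h}+Su_{0,3})$ carries no vertical improvement and the second half of Lemma \ref{lemm-heat} (with $p=1$, $q=r$, $\sigma=1$) yields exactly $t^{-(1-\frac{1}{r})}\|u_0\|_{\mathcal{L}^{1,q}_\infty}$ provided $r>1$, while the piece $SUe^{t\Deltah}(u_{0,3}-S\cdot u_{0,\rm h})$ is handled by the dyadic $U$-estimate just as for $u_3$, with the horizontal index $r$ in place of $p$.

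The uniform bounds \eqref{enh-decay-1} are obtained along the same lines: the heat-only pieces are bounded in the relevant Chemin--Lerner spaces since $e^{t\Deltah}$ is bounded on $L^1(\R^2_{\xh})$ at each dyadic scale and Lemma \ref{lemm-U} absorbs the Riesz transforms; the $U$-containing pieces admit both the non-decaying bound $\|u_0\|_{\mathcal{L}^{1,q}_\sigma}$ (through the boundedness of $U$, $S$ and $e^{t\Deltah}$) and the enhanced bound $Ct^{-\frac{1}{2}(1-\frac{1}{q})}\|u_0\|_{\mathcal{L}^1_\infty}$ via the dyadic inequality, which together form the two-term estimate claimed. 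The fully $L^1$-valued bound $\|e^{-tA}u_0\|_{\mathcal{L}^1_\sigma}\lesssim\|u_0\|_{\mathcal{L}^1_\sigma}$ is immediate since $S$, $U$ and $e^{t\Deltah}$ are each bounded on $\mathcal{L}^1_\sigma$.

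The principal difficulty is the endpoint behavior of the Riesz transform $S$ at $p=1,\infty$, which fails in classical Lebesgue spaces; the Chemin--Lerner framework (Lemma \ref{lemm-U}) is indispensable here, and is precisely the reason for introducing the spaces $\mathcal{L}^{p,q}_\sigma$. The only remaining bookkeeping is to check positivity of the summation exponent in $\sum_j 2^{2js}e^{-c2^{2j}t}$: in the $u_3$ estimate, $s=(1-\frac{1}{p})+\frac{1}{2}(1-\frac{1}{q})>0$ exactly when $(p,q)\neq(1,1)$, and in the heat-only part of $u_{\rm h}$, $s=1-\frac{1}{r}$ requires $r>1$, both matching the stated hypotheses.
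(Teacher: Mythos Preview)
Your proposal is correct and follows essentially the same route as the paper's proof. The paper phrases the key dyadic smoothing for $U$ by splitting $|\nablah|=|\nablah|^{1/q}\cdot|\nablah|^{1-1/q}$ and applying Hausdorff--Young to the kernel $2^{j/q}e^{-c2^j|x_3-y_3|}$, which is equivalent to your direct Young-inequality bound $\|\Deltahh_j Uf\|_{L^q_{x_3}L^p_{\xh}}\lesssim 2^{j(1-1/q)}\|\Deltahh_j f\|_{L^1_{x_3}L^p_{\xh}}$; the remaining steps (Bernstein plus heat smoothing, summation via $\sum_j 2^{2js}e^{-c2^{2j}t}\lesssim t^{-s}$, and treating $u_{\rm h}$ by combining Lemma~\ref{lemm-heat} for the heat-only piece with the $u_3$ argument for the $SU$ piece) match the paper exactly.
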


\begin{proof}
Observe that the estimates for $(e^{-tA}u_{0})_{\rm h}$ immediately follow from estimates on $(e^{-tA}u_{0})_{3}$, Lemma \ref{lemm-heat} {and the second estimate of Lemma \ref{lemm-U}}, we may only focus on $(e^{-tA}u_{0})_3$.  
By the solution formula \eqref{sol-form-3}, we see that
\begin{align*}
    (e^{-tA}u_{0})_3(x)
    ={}&
    \int_0^{x_3} |\nablah|^{\frac{1}{q}}e^{-(x_3 - y_3)|\nablah|}\left[|\nablah|^{1-\frac{1}{q}}e ^{t \Delta_{\rm h}} (u_{0,3}- S \cdot u_{0,\rm h} ) \right](x_{\rm h},y_3) dy_3.
\end{align*}
Hence, we have 
\begin{align*}
   &
   \| \Delta_j^{\rm h} (e^{-tA}u_{0})_3(\cdot,x_3) \|_{L^p(\mathbb{R}^2)}\\
    &\quad \leq{}
    C 
    \int_0^{x_3}
    2^{\frac{1}{q}j}e^{-c2^j(x_3-y_3)}
    \left\| \left[\Delta_j^{\rm h}|\nablah|^{1-\frac{1}{q}}e ^{t \Delta_{\rm h}} (u_{0,3}- S \cdot u_{0,\rm h} ) \right](\cdot,y_3)  \right\|_{L^p(\mathbb{R}^2)}dy_3.
\end{align*}
Thus, by the Hausdorff--Young inequality, it holds
\begin{align*}
    &\| (e^{-tA}u_{0})_3(t) \|_{\mathcal{L}^{p,q}_1(\mathbb{R}^3_+)}\\
    &\quad \leq{}
    C
    \sum_{j \in \mathbb{Z}}
    2^{\frac{1}{q}j}\|e^{-c2^jz_3}\|_{L^q_{z_3}(0,\infty)}
   \left \| \Delta_j^{\rm h}|\nablah|^{1-\frac{1}{q}}e ^{t \Delta_{\rm h}} (u_{0,3}- S \cdot u_{0,\rm h} ) \right\|_{L^1_{z_3}(0,\infty;L^p(\mathbb{R}^2))}\\
    &\quad \leq{}
    C
    \sum_{j \in \mathbb{Z}}
    \left\| \Delta_j^{\rm h}|\nablah|^{1-\frac{1}{q}}e ^{t \Delta_{\rm h}} (u_{0,3}- S \cdot u_{0,\rm h} ) \right\|_{L^1_{x_3}(0,\infty;L^p(\mathbb{R}^2_{x_{\rm h}}))}\\
    &\quad\leq{}
    C 
    \| e ^{t \Delta_{\rm h}} (u_{0,3}- S \cdot u_{0,\rm h} )\|_{{L^1_{x_3}}(0,\infty;\dot{B}_{p,1}^{1-\frac{1}{q}}(\mathbb{R}^2_{x_{\rm h}}))}.
\end{align*}
For the case of $(p,q)\neq (1,1)$, 
we see that
\begin{align*}
    &\| e ^{t \Delta_{\rm h}} (u_{0,3}- S \cdot u_{0,\rm h} )\|_{L^1_{x_3}(0,\infty;\dot{B}_{p,1}^{1-\frac{1}{q}}(\mathbb{R}^2_{x_{\rm h}}))} \\
    &\quad=
    \sum_{j \in \mathbb{Z}}
    2^{(1-\frac{1}{q})j}
    \| 
    \Deltahh_j  e ^{t \Delta_{\rm h}} (u_{0,3}- S \cdot u_{0,\rm h} )
    \|_{L^1_{x_3}(0,\infty;L^p(\mathbb{R}^2_{x_{\rm h}}))}\\ 
    &\quad\leq
    C
    \sum_{j \in \mathbb{Z}}
    2^{(1-\frac{1}{q})j}e^{-c2^{2j}t}
    \| 
    \Deltahh_j (u_{0,3}- S \cdot u_{0,\rm h} )
    \|_{L^1_{x_3}(0,\infty;L^p(\mathbb{R}^2_{x_{\rm h}}))}\\
    &\quad\leq
    C
    \sum_{j \in \mathbb{Z}}
    2^{\{2(1-\frac{1}{p})+(1-\frac{1}{q})\}j}e^{-c2^{2j} t}
    \| 
    \Deltahh_j (u_{0,3}- S \cdot u_{0,\rm h} )
    \|_{L^1(\mathbb{R}^3_+)}\\
    &\quad\leq
    C
    \left( 
    \sum_{j \in \mathbb{Z}}
    2^{\{2(1-\frac{1}{p})+(1-\frac{1}{q})\}j}e^{-c2^{2j}t}
    \right) 
    \sup_{j \in \mathbb{Z}}
    \| 
    \Deltahh_j (u_{0,3}- S \cdot u_{0,\rm h} )
    \|_{L^1(\mathbb{R}^3_+)}\\
    &\quad \leq{}
    C
    t^{-(1-\frac{1}{p})-\frac{1}{2}(1-\frac{1}{q})}
    \sup_{j \in \mathbb{Z}}
    \| 
    \Deltahh_j u_0
    \|_{L^1(\mathbb{R}^3_+)}     \\
    &\quad ={}
    C
    t^{-(1-\frac{1}{p})-\frac{1}{2}(1-\frac{1}{q})}
    \| u_0 \|_{\mathcal{L}^1_{\infty}(\mathbb{R}^3_+)},
\end{align*}
which provides the second estimate of \eqref{enh-decay-p}. Here, we again invoked the simple fact that for any positive $s$
\begin{align*}
\sup_{t>0} \sum_{j \in \mathbb{Z}} t^s 2^{2js} e ^{  -ct 2^{2j} } <\infty .
\end{align*}

Next, we focus on the case $p=q=1$.
From the similar calculations as above, we deduce from Lemma \ref{lemm-U} and Lemma \ref{lemm-heat} that
\begin{align*}
    &
    \| (e^{-tA}u_{0})_3 \|_{\mathcal{L}^1_{\sigma}(\mathbb{R}^3_+)}\\
    &\quad \leq{}
    C 
    \left\|
    \left\{
    \left\|
    \int_0^{x_3}
    2^{j}e^{-c2^j(x_3-y_3)}
    \left\| [\Delta_j^{\rm h}e ^{t \Delta_{\rm h}} (u_{0,3}- S \cdot u_{0,\rm h} )](\cdot,y_3)\right\|_{L^1(\mathbb{R}^2)}dy_3
    \right\|_{L^1(\mathbb{R}_{x_3>0})}
    \right\}_{j \in \mathbb{Z}}
    \right\|_{\ell^{\sigma}}\\
    &\quad \leq 
    C
    \| e ^{t \Delta_{\rm h}} (u_{0,3}- S \cdot u_{0,\rm h} )\|_{\widetilde{L^1_{x_3}}(0,\infty;\dot{B}_{1,\sigma}^0(\mathbb{R}^2_{x_{\rm h}}))}\\
    &\quad \leq{}
    C
    \| u_{0,3}- S \cdot u_{0,\rm h} \|_{\widetilde{L^1_{x_3}}(0,\infty;\dot{B}_{1,\sigma}^0(\mathbb{R}^2_{x_{\rm h}}))}\\
    &\quad \leq{}
    C
    \| u_0 \|_{\widetilde{L^1_{x_3}}(0,\infty;\dot{B}_{1,\sigma}^0(\mathbb{R}^2_{x_{\rm h}}))}\\
    &\quad ={}
    C
    \| u_0 \|_{\mathcal{L}^1_{\sigma}(\mathbb{R}^3_+)}.
\end{align*}
This implies \eqref{enh-decay-1}
and completes the proof.
\end{proof}

From the continuous embedding $\mathcal{L}_{1}^{p,q}(\mathbb{R}^3_+) \hookrightarrow \Lv^q\Lh^p(\mathbb{R}^3_+)$, we obtain the following corollary:
\begin{cor} \label{cor2.6}
    Let $1 \leq p,q \leq \infty$  with $(p,q)\neq (1,1)$ and $1 < r \leq \infty$.
    Then, there exists a positive constant $C=C(p,q,r)$ such that 
    \begin{align*}
        \begin{split}
        \| (e^{-tA}u_{0})_{\rm h}
        \|_{L^q_{x_3}L^r_{\xh}(\mathbb{R}^3_+)}
        &
        \leq
        Ct^{-(1-\frac{1}{r})}\| u_0 \|_{\mathcal{L}^{1,q}_{\infty}}
        +
        Ct^{-(1-\frac{1}{r})-\frac{1}{2}(1-\frac{1}{q})}\| u_0 \|_{{\mathcal{L}^{1}_{\infty}(\mathbb{R}^3_+)}}\\
        \| (e^{-tA}u_{0})_3 \|_{L^q_{x_3}L^p_{\xh}(\mathbb{R}^3_+)}
        &
        \leq
        Ct^{-(1-\frac{1}{p})-\frac{1}{2}(1-\frac{1}{q})}\| u_0 \|_{{\mathcal{L}^{1}_{\infty}(\mathbb{R}^3_+)}}
        \end{split}
    \end{align*}
    for all $u_0 \in \mathcal{L}^{1}_{\infty}(\mathbb{R}^3_+) $ with $\div u_0=0$.
\end{cor}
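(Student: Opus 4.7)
The plan is to obtain both inequalities as immediate consequences of Lemma \ref{lemm-enh-decay} combined with the continuous embedding
\begin{align*}
\mathcal{L}^{p,q}_1(\mathbb{R}^3_+) \hookrightarrow L^q_{x_3}L^p_{\xh}(\mathbb{R}^3_+),
\end{align*}
which itself follows by decomposing $f = \sum_{j \in \mathbb{Z}} \Delta_j^{\rm h} f$ and applying the triangle inequality in $L^q_{x_3}L^p_{\xh}$ to each dyadic block, giving
\begin{align*}
\|f\|_{L^q_{x_3}L^p_{\xh}} \leq \sum_{j \in \mathbb{Z}} \|\Delta_j^{\rm h} f\|_{L^q_{x_3}L^p_{\xh}} = \|f\|_{\mathcal{L}^{p,q}_1(\mathbb{R}^3_+)}.
\end{align*}

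First I would apply the second estimate of \eqref{enh-decay-p} in Lemma \ref{lemm-enh-decay} directly, which controls $\|(e^{-tA}u_0)_3\|_{\mathcal{L}^{p,q}_1}$ by $Ct^{-(1-\frac{1}{p})-\frac{1}{2}(1-\frac{1}{q})}\|u_0\|_{\mathcal{L}^1_{\infty}}$, and then dominate the left-hand side $\|(e^{-tA}u_0)_3\|_{L^q_{x_3}L^p_{\xh}}$ by the embedding above. This yields the third-component estimate of the corollary verbatim.

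Next, for the horizontal component, I would invoke the first estimate of \eqref{enh-decay-p} in Lemma \ref{lemm-enh-decay} with the role of $p$ there played by $r$; the hypothesis $1 < r \leq \infty$ is precisely what guarantees $(r,q) \neq (1,1)$ so that the lemma applies. The resulting bound on $\|(e^{-tA}u_0)_{\rm h}\|_{\mathcal{L}^{r,q}_1}$ transfers to $\|(e^{-tA}u_0)_{\rm h}\|_{L^q_{x_3}L^r_{\xh}}$ via the same embedding, giving the first estimate of the corollary.

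There is essentially no obstacle here: the only subtle point is that the estimates in Lemma \ref{lemm-enh-decay} were formulated with summation exponent $\sigma = 1$, which is precisely the value for which the Minkowski-type embedding $\mathcal{L}^{p,q}_1 \hookrightarrow L^q_{x_3}L^p_{\xh}$ holds. This explains, a posteriori, why the lemma was stated in the $\mathcal{L}^{\cdot,\cdot}_1$ norm rather than in the a priori weaker $\mathcal{L}^{\cdot,\cdot}_\infty$ norm: the stronger $\sigma=1$ formulation is exactly what is needed for the passage to ordinary anisotropic Lebesgue spaces to go through.
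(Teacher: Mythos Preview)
Your proposal is correct and follows exactly the paper's approach: the paper derives the corollary in one line from the continuous embedding $\mathcal{L}_{1}^{p,q}(\mathbb{R}^3_+) \hookrightarrow L^q_{x_3}L^p_{\xh}(\mathbb{R}^3_+)$ applied to the estimates of Lemma \ref{lemm-enh-decay}. Your additional remark on why $\sigma=1$ is the right exponent for this passage is accurate and matches the paper's implicit reasoning.
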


Finally, we may use the similar calculations as Lemma \ref{lemm-enh-decay} and the continuous embedding $\mathcal{L}_{1}^{p,q}(\mathbb{R}^3_+) \hookrightarrow \Lv^q\Lh^p(\mathbb{R}^3_+)$ again to obtain the decay estimates for first order derivatives: 
\begin{cor}\label{cor2.7}
 Let $1 \leq p,q \leq \infty$  with $(p,q)\neq (1,1)$ and $1 < r \leq \infty$.
    Then, there exists a positive constant $C=C(p,q,r)$ such that 
    \begin{align*}
        \begin{split}
        \|\p_3 (e^{-tA}u_{0})_{\rm h}
        \|_{L^q_{x_3}L^r_{\xh}(\mathbb{R}^3_+)}
        &
        \leq
        Ct^{-(1-\frac{1}{r})}\| \p_3 u_0 \|_{\mathcal{L}^{1,q}_{\infty}}
        +
        Ct^{-(1-\frac{1}{r})-\frac{1}{2}(1-\frac{1}{q})}\| \p_3 u_0 \|_{{\mathcal{L}^{1}_{\infty}(\mathbb{R}^3_+)}}\\
        \end{split}
    \end{align*}
    for all {$u_0 $} with $\div u_0=0$.
    
    In addition, assume that 
    $1 \leq p,q,r \leq \infty$. Then, there exists a positive constant $C=C(p,q,r)$ such that 
     \begin{align*}
        \begin{split}
        \|\nablah (e^{-tA}u_{0})_{\rm h}
        \|_{L^q_{x_3}L^r_{\xh}(\mathbb{R}^3_+)}
        &
        \leq
        Ct^{-(1-\frac{1}{r})  -\f{1}{2} }\|  u_0 \|_{\mathcal{L}^{1,q}_{\infty}}
        +
        Ct^{-(1-\frac{1}{r})-\frac{1}{2}(1-\frac{1}{q})  -\f{1}{2}  }\|  u_0 \|_{{\mathcal{L}^{1}_{\infty}(\mathbb{R}^3_+)}}\\
        \|\nablah (e^{-tA}u_{0})_3 \|_{L^q_{x_3}L^p_{\xh}(\mathbb{R}^3_+)}
        &
        \leq
        Ct^{-(1-\frac{1}{p})-\frac{1}{2}(1-\frac{1}{q}) -\f{1}{2} }\|  u_0 \|_{{\mathcal{L}^{1}_{\infty}(\mathbb{R}^3_+)}}
        \end{split}
    \end{align*}
     for all {$u_0 $} with $\div u_0=0$.
\end{cor}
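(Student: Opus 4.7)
To prove Corollary \ref{cor2.7}, I plan to extend the arguments of Lemma \ref{lemm-enh-decay} by placing the relevant derivative on the initial data, and then pass from the Chemin--Lerner-type norm to the claimed mixed Lebesgue norm via the embedding $\mathcal{L}_1^{p,q}(\mathbb{R}^3_+)\hookrightarrow L^q_{x_3}L^p_{\xh}(\mathbb{R}^3_+)$. The work thus amounts to understanding how $\nablah$ and $\p_3$ interact with the horizontal operators $U$ and $S$ appearing in the solution formulas \eqref{sol-form-3} and \eqref{sol-form-h}.

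For the two horizontal-gradient bounds, since $\nablah$ commutes with each of $e^{t\Deltah}$, $U$, and $S$, I simply place $\nablah$ on the initial data inside the formulas and repeat the proof of Lemma \ref{lemm-enh-decay} word for word. The extra $|\nablah|$ contributes one additional power of $2^j$ inside the Littlewood--Paley sum $\sum_{j\in\mathbb{Z}}2^{sj}e^{-c2^{2j}t}$, and the elementary estimate $\sup_{t>0}\sum_{j\in\mathbb{Z}}t^{s}2^{2js}e^{-ct 2^{2j}}<\infty$ for $s>0$ already invoked in the paper produces the desired additional $t^{-1/2}$ factor. The Riesz-transform boundedness of Lemma \ref{lemm-U} then dispatches the outer $S$ in the $u_{\rm h}$ case.

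The $\p_3$ estimate is the delicate case, because $\p_3$ does not commute with $U$. My plan is first to establish the operator identity $\p_3 U=(I-U)|\nablah|$ by direct differentiation of the definition of $U$. Applied to $u_3 = Ue^{t\Deltah}g$ with $g:=u_{0,3}-S\cdot u_{0,{\rm h}}$, this gives $\p_3 u_3=(I-U)e^{t\Deltah}|\nablah|g$. The no-slip condition $u_0|_{\p\mathbb{R}^3_+}=0$ forces $g|_{x_3=0}=0$, which allows an integration by parts in $y_3$ inside the defining integral of $U$ to yield the cancellation
\[
(I-U)e^{t\Deltah}|\nablah|g = Ue^{t\Deltah}\p_3 g.
\]
In other words, $\p_3 u_3$ has exactly the same $Ue^{t\Deltah}(\cdot)$ structure as $u_3$, but with initial data $\p_3 g$ in place of $g$. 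The proof of Lemma \ref{lemm-enh-decay} therefore applies verbatim to $\p_3 u_3$, giving $\|S\p_3 u_3\|_{\mathcal{L}^{r,q}_1}\leq Ct^{-(1-\frac{1}{r})-\frac{1}{2}(1-\frac{1}{q})}\|\p_3 u_0\|_{\mathcal{L}^{1}_{\infty}}$ after using Lemma \ref{lemm-U} both for the outer $S$ and for the bound $\|\p_3 g\|_{\mathcal{L}^{1}_{\infty}}\leq C\|\p_3 u_0\|_{\mathcal{L}^{1}_{\infty}}$.

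Writing $\p_3 u_{\rm h}=e^{t\Deltah}\p_3(u_{0,{\rm h}}+Su_{0,3})-S\p_3 u_3$, the remaining $e^{t\Deltah}\p_3(\cdot)$ term is controlled by the second, sharpened statement of Lemma \ref{lemm-heat} (with $\sigma=1$, $p=1$, $q=r$, and the hypothesis $r>1$) together with Lemma \ref{lemm-U}, yielding the first summand $Ct^{-(1-\frac{1}{r})}\|\p_3 u_0\|_{\mathcal{L}^{1,q}_{\infty}}$. Summing the two bounds and applying the embedding then completes the proof. The main obstacle is verifying the integration-by-parts cancellation $(I-U)e^{t\Deltah}|\nablah|g=Ue^{t\Deltah}\p_3 g$, in which the vanishing of $u_0$ on $\p\mathbb{R}^3_+$ enters in an essential way; once this is in hand, everything else is a routine application of the tools already developed in Lemmas \ref{lemm-U}, \ref{lemm-heat}, and \ref{lemm-enh-decay}.
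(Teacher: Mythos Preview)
Your proposal is correct and matches the route the paper indicates (``similar calculations as Lemma \ref{lemm-enh-decay}'' together with the embedding $\mathcal{L}^{p,q}_1\hookrightarrow L^q_{x_3}L^p_{\xh}$). In particular, your identification of the identity $\p_3 U=(I-U)|\nablah|$ and the integration-by-parts cancellation $(I-U)|\nablah|e^{t\Deltah}g=Ue^{t\Deltah}\p_3 g$ under the trace condition $g|_{x_3=0}=0$ is exactly the missing detail the paper suppresses; once this is in place the rest is, as you say, a verbatim rerun of Lemmas \ref{lemm-U}, \ref{lemm-heat}, and \ref{lemm-enh-decay}.
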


\begin{rem}
We note that the divergence free condition yields 
\begin{align*}
    \|\p_3 (e^{-tA}u_{0})_3 \|_{L^q_{x_3}L^p_{\xh}(\mathbb{R}^3_+)}
    ={}&
    \|\nablah \cdot (e^{-tA}u_{0})_{\rm h}\|_{L^q_{x_3}L^p_{\xh}(\mathbb{R}^3_+)}\\
    \leq{}&
    Ct^{-(1-\frac{1}{p})  -\f{1}{2} }\|  u_0 \|_{\mathcal{L}^{1,q}_{\infty}}
    +
    Ct^{-(1-\frac{1}{p})-\frac{1}{2}(1-\frac{1}{q})  -\f{1}{2}  }\|  u_0 \|_{{\mathcal{L}^{1}_{\infty}(\mathbb{R}^3_+)}}.
\end{align*}
\end{rem}

Before ending this subsection, we establish the boundedness of operators $V^{(\pm)},W^{(\pm)},T$ in the 
Chemin--Lerner type space.
\begin{lemm}\label{bdd-VWT}
Let $1 \leq p,q,\sigma \leq \infty$. Then, there exists a positive constant $C=C(p,q)$ such that 
\begin{align*}
& \| V^{(\pm)} f \|_{\mathcal{L}_{\sigma}^{p,q} (\mathbb{R}^3_+) } \leq C \| f \|_{\mathcal{L}_{\sigma}^{p,q} (\mathbb{R}^3_+) }    \\  
& \| W^{(\pm)} f \|_{\mathcal{L}_{\sigma}^{p,q} (\mathbb{R}^3_+) } \leq C \| f \|_{\mathcal{L}_{\sigma}^{p,q} (\mathbb{R}^3_+) }  \\
& \| T f \|_{\mathcal{L}_{\sigma}^{p,q} (\mathbb{R}^3_+) } \leq C \| f \|_{\mathcal{L}_{\sigma}^{p,q} (\mathbb{R}^3_+) }
\end{align*}
for all $f \in \mathcal{L}_{\sigma}^{p,q}(\mathbb{R}^3_+)$.
\end{lemm}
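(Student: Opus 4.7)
The plan is to mimic the Littlewood--Paley argument used in Lemma \ref{lemm-U}, treating each of the three operators as an integral in the vertical variable whose kernel, after a dyadic localization in the horizontal frequency, decays exponentially at rate $2^j$. The key analytic input is the pointwise/convolution estimate
\begin{align}
\bigl\| \Delta_j^{\rm h}\bigl(|\nablah|e^{-a|\nablah|}g\bigr)(\cdot,x_3) \bigr\|_{L^p(\mathbb{R}^2_{\xh})}
\leq
C\, 2^j e^{-c 2^j a}
\bigl\| \Delta_j^{\rm h} g(\cdot,x_3) \bigr\|_{L^p(\mathbb{R}^2_{\xh})}
\qquad (a\geq 0),
\end{align}
valid for all $1\le p\le \infty$ with constants independent of $j,a$, which follows from the fact that $|\nablah|e^{-a|\nablah|}$ convolved with an $L^1$-normalized bump at scale $2^{-j}$ gives a kernel whose $L^1(\mathbb{R}^2_{\xh})$-norm is $\leq C 2^j e^{-c2^j a}$.

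First I would dispose of the operator $T$. Apply $\Delta_j^{\rm h}$, take $L^p$-norm in $\xh$, and use the kernel estimate above with $a=x_3+y_3$ to obtain
\begin{align}
\bigl\| \Delta_j^{\rm h} Tf(\cdot,x_3) \bigr\|_{L^p(\mathbb{R}^2_{\xh})}
\leq
C\,2^j e^{-c 2^j x_3}\int_0^{\infty} e^{-c 2^j y_3}\bigl\| \Delta_j^{\rm h} f(\cdot,y_3) \bigr\|_{L^p(\mathbb{R}^2_{\xh})}\,dy_3.
\end{align}
Taking the $L^q_{x_3}(0,\infty)$ norm, the prefactor contributes $2^j\|e^{-c 2^j x_3}\|_{L^q(0,\infty)}\leq C\,2^{j(1-1/q)}$, while Hölder's inequality bounds the $y_3$-integral by $\|e^{-c 2^j y_3}\|_{L^{q'}(0,\infty)}\|\Delta_j^{\rm h}f\|_{L^q_{x_3}L^p_{\xh}}\leq C\, 2^{-j(1-1/q)}\|\Delta_j^{\rm h}f\|_{L^q_{x_3}L^p_{\xh}}$. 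The two powers of $2^j$ cancel, yielding $\|\Delta_j^{\rm h}Tf\|_{\Lv^q\Lh^p}\leq C\|\Delta_j^{\rm h}f\|_{\Lv^q\Lh^p}$ with $C$ independent of $j$, and the $\ell^\sigma(\mathbb{Z})$ norm in $j$ delivers the claim.

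For $V^{(\pm)}$ and $W^{(\pm)}$ I would split the kernel into its two components. The ``reflection'' part involving $e^{-|\nablah|(x_3+y_3)}$ is estimated exactly as for $T$ above, since the additional factor $\operatorname{sgn}(x_3-y_3)$ is irrelevant in pointwise bounds. For the ``diagonal'' part involving $e^{-|\nablah||x_3-y_3|}$, the same kernel estimate gives
\begin{align}
\bigl\| \Delta_j^{\rm h}\bigl(V^{(\pm)}f\bigr)(\cdot,x_3) \bigr\|_{L^p(\mathbb{R}^2_{\xh})}
\leq
C\int_0^{\infty} 2^j e^{-c 2^j |x_3-y_3|}\bigl\| \Delta_j^{\rm h} f(\cdot,y_3) \bigr\|_{L^p(\mathbb{R}^2_{\xh})}\,dy_3,
\end{align}
and the analogous bound for $W^{(\pm)}$. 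Extending the integrand by zero to $y_3<0$, the right-hand side is a one-dimensional convolution with the kernel $K_j(z):=2^j e^{-c 2^j |z|}$ satisfying $\|K_j\|_{L^1(\mathbb{R})}\leq C$ uniformly in $j$. Young's convolution inequality then yields $\|\Delta_j^{\rm h} V^{(\pm)} f\|_{\Lv^q\Lh^p}\leq C\|\Delta_j^{\rm h}f\|_{\Lv^q\Lh^p}$, and the same for $W^{(\pm)}$. Taking $\ell^\sigma(\mathbb{Z})$ over $j$ completes the argument.

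There is no serious obstacle: the only nontrivial point is the vertical Hölder computation that makes the $L^q_{x_3}$-norm of the non-translation-invariant kernel $2^j e^{-c2^j(x_3+y_3)}$ behave as well as a genuine convolution kernel, which is possible precisely because the factorization $e^{-c2^j(x_3+y_3)}=e^{-c2^j x_3}e^{-c 2^j y_3}$ makes the operator rank-one at each dyadic scale. The endpoint cases $p,q\in\{1,\infty\}$ require no modification, since Young's inequality and Hölder's inequality are used in the standard way.
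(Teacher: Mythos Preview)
Your proposal is correct and follows essentially the same route as the paper's proof: the treatment of $T$ is identical (dyadic localization, H\"older in $y_3$, $L^q$ in $x_3$, cancellation of the powers of $2^j$), and for the remaining operators both arguments rest on the same Young/Hausdorff--Young convolution estimate for the diagonal kernel $2^j e^{-c2^j|x_3-y_3|}$. The only difference is organizational: the paper first proves boundedness of $V,W$ (as for $U$) and of $T$, then observes that $V^{(\pm)},W^{(\pm)}$ are linear combinations of these, whereas you split each $V^{(\pm)},W^{(\pm)}$ directly into its diagonal and reflection parts.
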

\begin{proof}
Observe that the boundedness of operators $V,W$ follows in the same way as $U$ in Lemma \ref{lemm-U}. Next, we handle the operator $T$. By H\"{o}lder inequality, 
\begin{align*}
    \| \Deltahh_j Tf \|_{L^p(\mathbb{R}^2_{\xh})}
    \leq &\
    C 2^j 
    \int_0^{\infty} 
    e^{-c2^j(x_3+y_3)}
    \| \Deltahh_j f(\cdot,y_3)\|_{L^p(\mathbb{R}^2_{\xh})}
    dy_3  \\
     = &\   C 2^j 
      e^{-c2^j x_3 }  
       \int_0^{\infty} 
    e^{-c2^j  y_3 }
    \| \Deltahh_j f(\cdot,y_3)\|_{L^p(\mathbb{R}^2_{\xh})}
    dy_3   \\
     \leq  &\ C 2^j 
      e^{-c2^j x_3 }  
       \| \Deltahh_j f\|_{\Lv^q\Lh^p(\mathbb{R}^3_+)}
       \|  e^{-c2^j  y_3 }   \|_{L^{ \f{q}{q-1}   } (\mathbb{R}_{y_3>0})}  \\
        \leq  &\ C 2^j
        e^{-c2^j x_3 }   \| \Deltahh_j f\|_{\Lv^q\Lh^p(\mathbb{R}^3_+)} 
        2^{  -j (1-\f{1}{q})        }.
\end{align*}
Then we take $L^q(\mathbb{R}_{x_3>0})$-norm of the above relation to obtain
\begin{align*}
\| \Deltahh_j Tf\|_{\Lv^q\Lh^p(\mathbb{R}^3_+)} 
\leq C 2^j 2^{ -j\f{1}{q} }    \| \Deltahh_j f\|_{\Lv^q\Lh^p(\mathbb{R}^3_+)} 2^{  -j (1-\f{1}{q})        }  =C \| \Deltahh_j f\|_{\Lv^q\Lh^p(\mathbb{R}^3_+)} ,
\end{align*}
which gives the boundedness of $T$ by taking the $\ell^{\sigma}$-norm over the index $j \in \mathbb{Z}$. Finally, since $ V^{(\pm)}, W^{(\pm)}$ are linear combinations of $V,W,T$, the boundedness follows readily.
\end{proof}

\section{Decay estimates for the Duhamel terms in case of \texorpdfstring{$2\leq p,q \leq \infty$}{} }\label{decay-geq-2}

The aim of this section is to establish the decay estimates for the Duhamel terms in Corollary \ref{lemm:sol-form-u} in case of $2\leq p,q \leq \infty$. To this end, we suppose that $u(t)$ satisfies the following assumptions for some $s \in \mathbb{N}, 0<T_{\ast}\leq \infty$ and $ A_{\ast} \geq 0$: 
\begin{description}

\item[\textbf{(A1)}] $u\in C((0,\infty);X^s(\R^3_+)  ), \div u=0$ and for any $t>0$
\[
\| u(t)  \|_{H^s(\R^3_+)} \leq A_{\ast}, \quad \quad 
 \sum_{\alpha_3=0}^1 \| \p_3^{\alphav} u(t) \|_{\mathcal{L}_{\infty}^{1}\cap \mathcal{L}_{\infty}^{1,\infty} (\mathbb{R}^3_+)  }        \leq A_{\ast}(1+t). 
\]
\item[\textbf{(A2)}] For any $\alpha=(\alphah,\alpha_3) \in (\mathbb{N}\cup \{ 0 \} )^2 \times (\mathbb{N}\cup \{ 0 \} ) $ with $|\alpha| \leq 1 $,
\begin{equation} \nonumber 
\| \nabla^{\alpha} \uh(t)   \| _{ \Lv^q\Lh^p (\R^3_+)  }  \leq     C  A_{\ast} \left( t^{ -(1-\f{1}{p})  -\f{|\alphah|}{2}         } +    t^{ - (1-\f{1}{p})-\f{1}{2}(1-\f{1}{q})   -\f{|\alphah|}{2}         }  \right)  
\end{equation}
for all $0<t<T_{\ast}$, $2\leq  p,q \leq \infty$ and     
\begin{equation} \nonumber
 \| \nablah^{\alphah}  u_3(t) \|_{ \Lv^q\Lh^p (\R^3_+)  }  \leq      C A_{\ast}  t^{ - (1-\f{1}{p})-\f{1}{2}(1-\f{1}{q})   -\f{|\alphah|}{2}         }           
\end{equation} 
for all $0<t<T_{\ast}$, $2\leq  p,q \leq \infty$. 
\end{description}

Based on the assumptions above and the basic Sobolev embeddings, it follows immediately that 
\begin{lemm}\label{decay-duhamel-1}
Let $u(t)$ be subject to \textbf{(A1)} and \textbf{(A2)} for some $s \in \mathbb{N} $ with $s \geq 3, 0<T_{\ast}\leq \infty$ and $ A_{\ast} \geq 0$. Then there exists a pure constant $C>0$ such that for all $2\leq  p,q \leq \infty$  
\begin{align*}
     &  \| \uh(t) \| _{ \Lv^q\Lh^p (\R^3_+)  } ,\| \p_3 \uh(t) \| _{ \Lv^q\Lh^p (\R^3_+)  }   \leq     C A_{\ast} (1+t)^{- (1-\f{1}{p})  }  ,          \\
     & \| \nablah \uh(t) \| _{ \Lv^q\Lh^p (\R^3_+)  } ,\| \p_3 u_3 (t) \| _{ \Lv^q\Lh^p (\R^3_+)  }   \leq   C A_{\ast} (1+t)^{- (1-\f{1}{p}) -\f{1}{2}  },     \\
   &      \|  u_3 (t) \| _{ \Lv^q\Lh^p (\R^3_+)  }  \leq    C A_{\ast} (1+t)^{- (1-\f{1}{p})-\f{1}{2}(1-\f{1}{q})  } ,             \\ 
 &      \|  \nablah u_3 (t) \| _{ \Lv^q\Lh^p (\R^3_+)  } \leq  C A_{\ast} (1+t)^{- (1-\f{1}{p})-\f{1}{2}(1-\f{1}{q})-\f{1}{2}  } ,                                
\end{align*}
for all $0<t <T_{\ast}$.   
\end{lemm}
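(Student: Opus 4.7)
The plan is to split the time interval at $t=1$, using \textbf{(A2)} for $t\geq 1$ and \textbf{(A1)} for $0 < t \leq 1$, and to patch via the elementary observations that $t^{-\beta}\leq 2^{\beta}(1+t)^{-\beta}$ for $t\geq 1$ and $(1+t)^{-\beta}\geq 2^{-\beta}$ for $t\in(0,1]$, for any $\beta\geq 0$.

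For $t\geq 1$, each target bound follows directly from the corresponding estimate in \textbf{(A2)} once we keep only the slowest-decaying (hence dominant for $t\geq 1$) term. For example, $t^{-(1-1/p)}$ dominates $t^{-(1-1/p)-(1-1/q)/2}$, giving
\begin{align*}
\|\uh(t)\|_{\Lv^q\Lh^p(\R^3_+)}\leq C A_{\ast} (1+t)^{-(1-1/p)},
\end{align*}
and analogous statements for $\p_3\uh$, $\nablah\uh$, $u_3$, and $\nablah u_3$. The bound on $\p_3 u_3$ is reduced to that on $\nablah\cdot \uh$ via the divergence-free condition $\p_3 u_3 = -\nablah\cdot \uh$.

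For $0 < t \leq 1$, we invoke \textbf{(A1)}: $\|u(t)\|_{H^s(\R^3_+)}\leq A_{\ast}$ with $s\geq 3$. The Sobolev embedding $H^s(\R^3_+)\hookrightarrow \Lv^q\Lh^p(\R^3_+)$ holds with a constant uniform in $(p,q)\in[2,\infty]^2$ when $s\geq 3$: the corner cases $H^s\hookrightarrow L^2$, $H^s\hookrightarrow L^{\infty}$, $H^s\hookrightarrow L^{\infty}_{x_3}L^2_{\xh}$, and $H^s\hookrightarrow L^2_{x_3}L^{\infty}_{\xh}$ follow from the standard one- and two-dimensional Sobolev embeddings applied in the appropriate variable (using the trace/Sobolev inequality in $x_3$ and the planar embedding $H^2(\R^2)\hookrightarrow L^{\infty}(\R^2)$ in $\xh$), and the intermediate values follow by interpolation. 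Applied to $\nabla^{\alpha} u$ for $|\alpha|\leq 1$, whose $H^{s-1}$-norm is bounded by $\|u(t)\|_{H^s}\leq A_{\ast}$, this yields $\|\nabla^{\alpha} u(t)\|_{\Lv^q\Lh^p(\R^3_+)}\leq C A_{\ast}$ uniformly in $t\in(0,1]$. Since the exponents $\beta$ appearing on the right-hand side of the lemma are uniformly bounded (in fact $\beta\leq 2$), one has $(1+t)^{-\beta}\geq 2^{-\beta}\geq c > 0$ on $(0,1]$, so each such bound absorbs into the target bound $C A_{\ast}(1+t)^{-\beta}$.

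No step here is genuinely delicate; the main thing to verify is that the anisotropic Sobolev embedding $H^s(\R^3_+)\hookrightarrow \Lv^q\Lh^p(\R^3_+)$ holds with constants uniform in $(p,q)\in[2,\infty]^2$, which is standard for $s\geq 3$.
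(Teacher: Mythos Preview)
Your proposal is correct and is precisely the argument the paper has in mind: the paper gives no proof beyond the sentence ``Based on the assumptions above and the basic Sobolev embeddings, it follows immediately,'' and your write-up simply unpacks this---using \textbf{(A2)} for $t\geq 1$ (retaining the dominant power and converting $t^{-\beta}$ to $(1+t)^{-\beta}$), using the $H^s$-bound from \textbf{(A1)} together with the anisotropic Sobolev embedding $H^{s}(\R^3_+)\hookrightarrow \Lv^q\Lh^p(\R^3_+)$ for $0<t\leq 1$, and handling $\p_3 u_3$ via the divergence-free constraint. Nothing is missing.
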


We are now in a position to show the decay estimates for the Duhamel terms. It plays crucial role in proving the decay estimates of nonlinear solutions in case of $2\leq p,q \leq \infty$.  
\begin{lemm}\label{decay-duhamel-2} 
Let $u(t)$ be subject to \textbf{(A1)} and \textbf{(A2)} for some $s \in \mathbb{N} $ with $s \geq 5, 0<T_{\ast}\leq \infty$ and $ A_{\ast} \geq 0$. Then there exists a pure constant $C>0$ such that for any $2\leq p,q \leq \infty, 0<t<T_{\ast}$ and any $\alpha=(\alphah,\alpha_3) \in (\mathbb{N}\cup \{ 0 \} )^2 \times (\mathbb{N}\cup \{ 0 \} ) $ with $|\alpha| \leq 1 $ 
\begin{align*}
 \| \Grad^{\alpha} \mathcal{D}_1^{\rm v}[u](t)    \|_{ \Lv^q\Lh^p (\R^3_+)  } \leq    &     C A_{\ast}^2  (1+t)^ {- (1-\f{1}{p})-\f{1}{2}(1-\f{1}{q})-\f{1+|\alphah|}{2}  }  \log(2+t) ,    \\  
   \| \Grad^{\alpha} \mathcal{D}_2^{\rm v}[u](t)    \|_{ \Lv^q\Lh^p (\R^3_+)  } \leq    &     C A_{\ast}^2  (1+t)^{ - (1-\f{1}{p})-\f{1}{2}(1-\f{1}{q})-\f{1+|\alphah|}{2}   }     ,     \\
   \| \Grad^{\alpha}\mathcal{D}_3^{\rm v}[u](t)    \|_{ \Lv^q\Lh^p (\R^3_+)  } \leq    &    C A_{\ast}^2  (1+t) ^{  - (1-\f{1}{p}) -\f{1+|\alphah|}{2}  } ,    
\end{align*}
and  
\begin{align*}
 \| \Grad^{\alpha} \mathcal{D}_1^{\rm h}[u](t)    \|_{ \Lv^q\Lh^p (\R^3_+)  }  \leq    &     C A_{\ast}^2  (1+t) ^{  - (1-\f{1}{p}) -\f{|\alphah|}{2}    }       ,   \\
   \| \Grad^{\alpha} \mathcal{D}_2^{\rm h}[u](t)    \|_{ \Lv^q\Lh^p (\R^3_+)  }  \leq    &     C A_{\ast}^2  (1+t)^{ - (1-\f{1}{p}) -\f{1+|\alphah|}{2}   }  \log(2+t)  ,    \\
   \| \Grad^{\alpha} \mathcal{D}_3^{\rm h}[u](t)    \|_{ \Lv^q\Lh^p (\R^3_+)  } \leq    &    C A_{\ast}^2  (1+t) ^{- (1-\f{1}{p})-\f{1}{2}(1-\f{1}{q})-\f{1+|\alphah|}{2}  }  \log(2+t)   ,   \\
    \| \Grad^{\alpha} \mathcal{D}_4^{\rm h}[u](t)    \|_{ \Lv^q\Lh^p (\R^3_+)  } \leq    &     C A_{\ast}^2  (1+t)^  {- (1-\f{1}{p})-\f{1}{2}(1-\f{1}{q})-\f{1+|\alphah|}{2}  }      ,  \\
     \| \Grad^{\alpha} \mathcal{D}_5^{\rm h}[u](t)    \|_{ \Lv^q\Lh^p (\R^3_+)  } \leq    &   C A_{\ast}^2  (1+t)^{ - (1-\f{1}{p}) -\f{1+|\alphah|}{2}   }    .          
\end{align*}
\end{lemm}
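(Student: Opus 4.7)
The plan is to estimate the eight Duhamel terms by a unified scheme: peel off the auxiliary operators $V^{(\pm)}, W^{(\pm)}, S, T$ using their boundedness on Chemin--Lerner type spaces (Lemmas \ref{lemm-U} and \ref{bdd-VWT}), reduce each term to an integral of the form $\nabla^\alpha \int_0^t e^{(t-\tau)\Deltah} \nabla^\beta N(\tau)\, d\tau$ with $N$ quadratic in $u$, and combine the heat semigroup estimate of Lemma \ref{lemm-heat} with the pointwise-in-time decay of $u$ supplied by Lemma \ref{decay-duhamel-1}. The time integral is treated by the standard splitting $\int_0^t = \int_0^{t/2} + \int_{t/2}^t$: on $[0,t/2]$ the heat-kernel factor uses $t-\tau \geq t/2$ so the decay comes from it and $N(\tau)$ is controlled by its mild $L^1$-type bound, while on $[t/2,t]$ one exploits the integrable singularity of the heat kernel combined with the strongest available decay of $u$ near time $t$.

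For the three $u_3$-terms $\mathcal{D}^{\rm v}_1, \mathcal{D}^{\rm v}_2, \mathcal{D}^{\rm v}_3$ and for the three $u_h$-terms $\mathcal{D}^{\rm h}_3, \mathcal{D}^{\rm h}_4, \mathcal{D}^{\rm h}_5$, the enhanced factor $\tfrac{1}{2}(1 - \tfrac{1}{q})$ in the claimed rate must emerge from the Poisson-type kernels hidden inside $V^{(\pm)}, W^{(\pm)}, T$. Here the argument parallels the proof of Lemma \ref{lemm-enh-decay}: after dyadic decomposition $\Delta_j^{\rm h}$ in the horizontal variables, the kernel $|\nablah| e^{-|\nablah||x_3 \pm y_3|}$ localized at horizontal frequency $2^j$ contributes a factor $2^j$ in the norm, and the exponential $e^{-c 2^j x_3}$ is $L^q_{x_3}$-integrable with norm comparable to $2^{-j/q}$; combining these with the heat factor $e^{-c 2^{2j}(t-\tau)}$ and summing over $j \in \mathbb{Z}$ produces the enhanced $\tfrac{1}{2}(1 - \tfrac{1}{q})$ gain. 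The remaining terms $\mathcal{D}^{\rm h}_1, \mathcal{D}^{\rm h}_2$ carry no auxiliary operator, so the 2D-type heat decay $(1+t)^{-(1-1/p) - |\alphah|/2}$ follows directly from Lemma \ref{lemm-heat}, with H\"older's inequality used to split $\uh \otimes \uh$ or $\uh u_3$ in the appropriate anisotropic Lebesgue space.

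The logarithmic factors in $\mathcal{D}^{\rm v}_1, \mathcal{D}^{\rm h}_2, \mathcal{D}^{\rm h}_3$ will arise exactly in the borderline integral $\int_{t/2}^t (t - \tau)^{-\gamma}(1+\tau)^{-\beta}\, d\tau$ when $\gamma + \beta = 1$, reflecting that these three terms carry $\uh \otimes \uh$ in the nonlinearity, which decays only at the 2D rate, while we are aiming for the 3D-enhanced decay and must spend the extra $(1/q)$-gain. The main obstacle will be the bookkeeping of horizontal versus vertical integrability indices: one must choose intermediate $(p_0, q_0)$ in H\"older's inequality so that the two factors of $u$ are controlled by Lemma \ref{decay-duhamel-1} while Lemma \ref{lemm-heat} applies in the correct $\mathcal{L}^{p_0, r}_\sigma$ space with the vertical derivative transferred onto the nonlinearity via the $\p_3^{\alpha_3}$ term. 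For $\mathcal{D}^{\rm h}_1$ at $|\alpha| = 1$ with $\alpha_3 = 1$, this produces $\p_3^2(\uh u_3)$, which I would re-express using the divergence-free condition $\p_3 u_3 = -\div_{\rm h} \uh$ to trade one vertical derivative for a horizontal one before applying the quadratic estimate.
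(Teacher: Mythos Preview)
Your overall strategy matches the paper's proof closely: reduce via boundedness of $V^{(\pm)},W^{(\pm)},T,S$ in $\mathcal{L}^{p,q}_\sigma$, split $\int_0^t=\int_0^{t/2}+\int_{t/2}^t$, and extract the enhanced $\tfrac12(1-\tfrac1q)$ gain by writing the Poisson-type kernel as $|\nablah|^{1/q}e^{-|\nablah||x_3\pm y_3|}\cdot|\nablah|^{1-1/q}$ and summing the dyadic blocks against $e^{-c2^{2j}(t-\tau)}$, exactly as in Lemma~\ref{lemm-enh-decay}. One small correction: the logarithmic loss in $\mathcal{D}^{\rm v}_1,\mathcal{D}^{\rm h}_2,\mathcal{D}^{\rm h}_3$ actually comes from the $[0,t/2]$ portion, where the $\uh\otimes\uh$ factor decays only like $(1+\tau)^{-1}$ and $\int_0^{t/2}(1+\tau)^{-1}\,d\tau\sim\log t$; the $[t/2,t]$ piece is harmless there.

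There is, however, a genuine gap in your treatment of $\partial_3\mathcal{D}^{\rm h}_1$. The divergence-free identity $\partial_3 u_3=-\div_{\rm h}\uh$ does not remove all second-order vertical derivatives from $\partial_3^2(\uh u_3)$: after expansion you are left with a term $u_3\,\partial_3^2\uh$ (and a mixed term $\uh\,\div_{\rm h}\partial_3\uh$, which you can indeed rewrite as a horizontal divergence of a first-order product plus lower-order terms). For $u_3\,\partial_3^2\uh$ there is no horizontal derivative to pass onto the heat kernel, and Lemma~\ref{decay-duhamel-1} supplies decay only for $\nabla^\alpha u$ with $|\alpha|\le 1$; the bare $H^s$ bound from \textbf{(A1)} gives no time decay for $\partial_3^2\uh$, so the resulting time integral diverges. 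The paper closes this by Gagliardo--Nirenberg interpolation,
\[
\|\partial_3^2 u\|_{L^\infty_{x_3}L^2_{\xh}}\le C\|\partial_3 u\|_{L^2}^{5/8}\|\partial_3^5 u\|_{L^2}^{3/8},
\]
which converts part of the $H^s$ control (this is where $s\ge 5$ is used) into the decay rate $(1+\tau)^{-17/16}$ needed to make the $[0,t/2]$ integral converge. Without this interpolation step, your scheme for $\partial_3\mathcal{D}^{\rm h}_1$ does not close.
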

\begin{proof} 
We start with $\mathcal{D}_1^{\rm h}[u](t)$. Using Lemma \ref{decay-duhamel-1}, one finds for all $2\leq p,q \leq \infty$
\begin{align}
 \|u_3 \uh (\tau)  \|_{\Lv^q \Lh^1 (\R^3_+)  }   & \leq \|u_3(\tau)  \|_{\Lv^{\infty}\Lh^2 (\R^3_+) } \| \uh(\tau) \|_{\Lv^q \Lh^2 (\R^3_+) } \leq CA_{\ast}^2 (1+\tau)^{-\f{3}{2}},     \nonumber      \\
  \|\p_3(u_3 \uh) (\tau)  \|_{\Lv^q \Lh^1 (\R^3_+)  }   & \leq   \|\p_3 u_3(\tau)  \|_{\Lv^{\infty}\Lh^2 (\R^3_+) } \| \uh(\tau) \|_{\Lv^q \Lh^2 (\R^3_+) }       \nonumber    \\ 
  & \quad \quad +  \| u_3(\tau)  \|_{\Lv^{\infty}\Lh^2 (\R^3_+) } \| \p_3 \uh(\tau) \|_{\Lv^q \Lh^2 (\R^3_+) }   \nonumber   \\
    &  \leq C A_{\ast}^2 (1+\tau)^{-\f{3}{2} },   \nonumber  \\
 \| (u_3 \uh) (\tau)  \|_{\Lv^q \Lh^p (\R^3_+)  }   & \leq 
      \| u_3(\tau)  \|_{L^{\infty}(\R^3_+)} \| \uh(\tau) \|_{\Lv^q \Lh^p (\R^3_+)  } \leq CA_{\ast}^2  (1+\tau)^{ -\f{3}{2}-(1-\f{1}{p})   } ,    \nonumber     \\
   \|\nablah (u_3 \uh) (\tau)  \|_{\Lv^q \Lh^p (\R^3_+)  }   & \leq 
      \| \nablah u_3(\tau)  \|_{L^{\infty}(\R^3_+)} \| \uh(\tau) \|_{\Lv^q \Lh^p (\R^3_+)  }     \nonumber\\
      & \quad \quad + \|  u_3(\tau)  \|_{L^{\infty}(\R^3_+)} \| \nablah \uh(\tau) \|_{\Lv^q \Lh^p (\R^3_+)  }  \nonumber\\
      &   \leq CA_{\ast}^2  (1+\tau)^{ -\f{3}{2}-(1-\f{1}{p}) -\f{1}{2}  }  ,     \nonumber  \\
    \|\p_3 (u_3 \uh) (\tau)  \|_{\Lv^q \Lh^p (\R^3_+)  }   & \leq 
      \| \p_3 u_3(\tau)  \|_{L^{\infty}(\R^3_+)} \| \uh(\tau) \|_{\Lv^q \Lh^p (\R^3_+)  }       \label{lem5.2-3}  \\
      & \quad \quad + \|  u_3(\tau)  \|_{L^{\infty}(\R^3_+)} \| \p_3 \uh(\tau) \|_{\Lv^q \Lh^p (\R^3_+)  }  \nonumber\\
      &   \leq   CA_{\ast}^2  (1+\tau)^{ -\f{3}{2}-(1-\f{1}{p}) }  .   \nonumber 
\end{align}
Let us further divide it into $\alpha_3=0$ and $\alpha_3=1$ respectively. If $\alpha_3=0$, we use \eqref{lem5.2-3} to control 
\begin{align*}
    &  \left\| \nablah^{\alphah} \int_0^t   e^{(t-\tau) \Deltah }  \p_3 (u_3 \uh)(\tau)     d \tau \right\|_{\Lv^q \Lh^p (\R^3_+)  }            \\
    &   \quad  \leq \int_0^{\f{t}{2}  } \| \nablah^{\alphah} \Gh(t-\tau) \|_{ L^p(\R^2) } \|\p_3(u_3 \uh)(\tau)   \|_{\Lv^q \Lh^1 (\R^3_+)  }   d\tau   \\
    &   \quad \quad +  \int_{\f{t}{2}}^{t} \| \nablah^{\alphah} \Gh(t-\tau) \|_{ L^1(\R^2) }   \|\p_3 (u_3 \uh) (\tau)  \|_{\Lv^q \Lh^p (\R^3_+)  }  d\tau      \\
    & \quad \leq  CA_{\ast}^2 \left( \int_0^{\f{t}{2}  } (t-\tau)^{ -(1-\f{1}{p}) -\f{|\alphah|}{2}  } (1+\tau)^{-\f{3}{2}}d\tau 
    +   \int_{\f{t}{2}}^{t} (t-\tau)^{-\f{ |\alphah| }{2} } (1+\tau)^{ -\f{3}{2}-(1- \f{1}{p} )  }  d\tau   \right) \\
    & \quad \leq CA_{\ast}^2 t^{ -(1-\f{1}{p}) -\f{|\alphah|}{2}  } 
\end{align*}
for $t \geq 1$ and 
\begin{align*}
    &    \left\| \nablah^{\alphah} \int_0^t   e^{(t-\tau) \Deltah }  \p_3 (u_3 \uh)(\tau)     d \tau \right\|_{\Lv^q \Lh^p (\R^3_+)  }            \\
    &   \quad  \leq \int_0^t  \| \nablah^{\alphah} \Gh(t-\tau) \|_{ L^1(\R^2) }   \|\p_3 (u_3 \uh) (\tau)  \|_{\Lv^q \Lh^p (\R^3_+)  }  d\tau       \\
    &   \quad \leq  CA_{\ast}^2 \int_0^t  (t-\tau)^{-\f{ |\alphah| }{2} } (1+\tau)^{ -\f{3}{2}-(1- \f{1}{p} )  }   d\tau       \\
    & \quad \leq  CA_{\ast}^2  
\end{align*}
for $0<t\leq 1$. Therefore,
\begin{equation}\label{lem5.2-5}
   \left\| \nablah^{\alphah} \int_0^t   e^{(t-\tau) \Deltah }  \p_3 (u_3 \uh)(\tau)     d \tau \right\|_{\Lv^q \Lh^p (\R^3_+)  }    
    \leq  CA_{\ast}^2 (1+t)^{ -(1-\f{1}{p}) -\f{|\alphah|}{2}  }  . 
\end{equation}
To consider the case $\alpha_3=1$, we first deduce from Lemma \ref{decay-duhamel-1} and the Gagliardo-Nirenberg interpolation inequality\footnote{In case of half-space, Gagliardo-Nirenberg interpolation inequality was used in Borchers-Miyakawa \cite{BM88}, Kozono \cite{K89}, Han \cite{Han16}, etc. A detailed proof could be found in Brezis-Mironescu \cite{BrM18}.
} that for all $2\leq q \leq \infty$
\begin{align*}
  \|\p_3^2 (u_3 \uh) (\tau)  \|_{\Lv^q \Lh^1 (\R^3_+)  }  &  \leq  \| \p_3^2 u_3(\tau)  \|_{\Lv^{\infty} \Lh^2 (\R^3_+)  }   \| \uh(\tau) \|_{\Lv^q \Lh^2 (\R^3_+)  }      \\
    &   \quad  +2 \| \p_3 u_3(\tau)  \|_{\Lv^{\infty} \Lh^2 (\R^3_+)  }  \| \p_3 \uh(\tau) \|_{\Lv^{q} \Lh^2 (\R^3_+)  }      \\
    &   \quad     + \| u_3(\tau) \|_{\Lv^q \Lh^2 (\R^3_+)  } \| \p_3^2 \uh(\tau)  \|_{\Lv^{\infty} \Lh^2 (\R^3_+)  }   \\
    & \leq C \Big( \| \p_3 u_3(\tau) \|_{L^2(\R^3_+)}^{\f{5}{8}  } \| \p_3^5 u_3(\tau) \|_{L^2(\R^3_+)}^{\f{3}{8}  } \| \uh(\tau) \|_{\Lv^q \Lh^2 (\R^3_+)  }   \\
    &  \quad + \| \p_3 u_3(\tau)  \|_{\Lv^{\infty} \Lh^2 (\R^3_+)  }  \| \p_3 \uh(\tau) \|_{\Lv^{q} \Lh^2 (\R^3_+)  }  \\
    &   \quad +  \| u_3(\tau) \|_{\Lv^q \Lh^2 (\R^3_+)  } \| \p_3 \uh(\tau) \|_{L^2(\R^3_+)}^{\f{5}{8}  } \| \p_3^5 \uh(\tau) \|_{L^2(\R^3_+)}^{\f{3}{8}  } \Big) \\
    & \leq C A_{\ast}^2 (1+\tau)^{-\f{17}{16}};  
\end{align*} 
\begin{align*}
  \|\p_3^2 (u_3 \uh) (\tau)  \|_{\Lv^q \Lh^2 (\R^3_+)  }  &  \leq  \| \p_3^2 u_3(\tau)  \|_{\Lv^{\infty} \Lh^2 (\R^3_+)  }  \| \uh(\tau) \|_{\Lv^q \Lh^{\infty} (\R^3_+)  }     \\
    &   \quad  +2 \| \p_3 u_3(\tau)  \|_{L^{\infty} (\R^3_+)  }  \| \p_3 \uh(\tau) \|_{\Lv^{q} \Lh^2 (\R^3_+)  }      \\
    &   \quad     + \| u_3(\tau) \|_{\Lv^q \Lh^{\infty} (\R^3_+)  }  \| \p_3^2 \uh(\tau)  \|_{\Lv^{\infty} \Lh^2 (\R^3_+)  }   \\  
     & \leq C \Big( \| \p_3 u_3(\tau) \|_{L^2(\R^3_+)}^{\f{5}{8}  } \| \p_3^5 u_3(\tau) \|_{L^2(\R^3_+)}^{\f{3}{8}  } \| \uh(\tau) \|_{\Lv^q \Lh^{\infty} (\R^3_+)  } \\
    &  \quad + \| \p_3 u_3(\tau)  \|_{L^{\infty} (\R^3_+)  }  \| \p_3 \uh(\tau) \|_{\Lv^{q} \Lh^2 (\R^3_+)  }  \\
    &   \quad +  \| u_3(\tau) \|_{\Lv^q \Lh^{\infty} (\R^3_+)  } \| \p_3 \uh(\tau) \|_{L^2(\R^3_+)}^{\f{5}{8}  } \| \p_3^5 \uh(\tau) \|_{L^2(\R^3_+)}^{\f{3}{8}  } \Big) \\
    & \leq C A_{\ast}^2 (1+\tau)^{-\f{25}{16}}. 
\end{align*} 
Based on these two estimates,  
\begin{align*}
    &  \left\| \p_3 \int_0^t   e^{(t-\tau) \Deltah }  \p_3 (u_3 \uh)(\tau)     d \tau \right\|_{\Lv^q \Lh^p (\R^3_+)  }            \\
    &   \quad  \leq \int_0^{\f{t}{2}  } \| \Gh(t-\tau) \|_{ L^p(\R^2) } \|\p_3^2 (u_3 \uh)(\tau)   \|_{\Lv^q \Lh^1 (\R^3_+)  }   d\tau   \\
    &   \quad \quad +  
    \int_{\f{t}{2}}^{t} \|  \Gh(t-\tau) \|_{ L^{\f{2p}{2+p}} (\R^2) }   \|\p_3^2 (u_3 \uh) (\tau)  \|_{\Lv^q \Lh^2 (\R^3_+)  }  d\tau      \\
    & \quad \leq  CA_{\ast}^2 \left( \int_0^{\f{t}{2}  } (t-\tau)^{ -(1-\f{1}{p})  } (1+\tau)^{-\f{17}{16}}d\tau 
    +  
    \int_{\f{t}{2}}^{t} (t-\tau)^{-(\f{ 1 }{2} -\f{1}{p}) } (1+\tau)^{ -\f{25}{16} }  d\tau   \right)   \\  
    & \quad \leq CA_{\ast}^2 t^{ -(1-\f{1}{p})   } 
\end{align*}
for $t \geq 1$ and
\begin{align*}
    &    \left\|\p_3 \int_0^t   e^{(t-\tau) \Deltah }  \p_3 (u_3 \uh)(\tau)     d \tau \right\|_{\Lv^q \Lh^p (\R^3_+)  }            \\
    &   \quad  
    \leq 
    \int_0^t  \| \Gh(t-\tau) \|_{ L^{\f{2p}{2+p}}(\R^2) }   \|\p_3 (u_3 \uh) (\tau)  \|_{\Lv^q \Lh^2 (\R^3_+)  }  d\tau        \\
    &   \quad 
    \leq 
    CA_{\ast}^2 
    \int_0^t  (t-\tau)^{-(\f{ 1 }{2}-\f{1}{p}  )} (1+\tau)^{ -\f{25}{16} }   d\tau      \\
    & \quad \leq  CA_{\ast}^2  
\end{align*}
for $0<t\leq 1$. Thus, 
\begin{equation}\label{lem5.2-6}
   \left\| \p_3 \int_0^t   e^{(t-\tau) \Deltah }  \p_3 (u_3 \uh)(\tau)     d \tau \right\|_{\Lv^q \Lh^p (\R^3_+)  }    
    \leq  CA_{\ast}^2 (1+t)^{ -(1-\f{1}{p}) }.  
\end{equation}

Next, we treat $\mathcal{D}_2^{\rm h}[u](t)$. Using Lemma \ref{decay-duhamel-1} again, one finds for all $2\leq p,q \leq \infty$ and $k,l\in \{1,2 \}$
\begin{align}
 \|u_k u_l (\tau)  \|_{\Lv^q \Lh^1 (\R^3_+)  }   & \leq \|u_k(\tau)  \|_{\Lv^{\infty}\Lh^2 (\R^3_+) } \| u_l(\tau) \|_{\Lv^q \Lh^2 (\R^3_+) } \leq CA_{\ast}^2 (1+\tau)^{-1},     \nonumber      \\
  \|\p_3(u_k u_l) (\tau)  \|_{\Lv^q \Lh^1 (\R^3_+)  }   & \leq C  \|\p_3 u_k(\tau)  \|_{\Lv^{\infty}\Lh^2 (\R^3_+) } \| u_l(\tau) \|_{\Lv^q \Lh^2 (\R^3_+) }         \nonumber    \\
    &  \leq C A_{\ast}^2 (1+\tau)^{-1},   \nonumber  \\
 \|\Grad^{\alpha} (u_k u_l) (\tau)  \|_{\Lv^q \Lh^p (\R^3_+)  }   & \leq 
    C  \| \Grad^{\alpha}u_k(\tau)  \|_{L^{\infty}(\R^3_+)} \| u_l(\tau) \|_{\Lv^q \Lh^p (\R^3_+)  }   \label{lem5.2-1}  \\
    & \leq CA_{\ast}^2  (1+\tau)^{ -1-\f{|\alphah|}{2}   } (1+\tau)^{-(1-\f{1}{p})}   \nonumber  \\
    & = CA_{\ast}^2  (1+\tau)^{ -(2- \f{1}{p} )-\f{|\alphah|}{2}   }.   \nonumber 
\end{align}
It then follows from \eqref{lem5.2-1} that 
\begin{align*}
    &  \left\| \Grad^{\alpha} \int_0^t   e^{(t-\tau) \Deltah }  \div_{\rm h} (\uh \otimes \uh)(\tau)     d \tau \right\|_{\Lv^q \Lh^p (\R^3_+)  }            \\
    &   \quad  \leq \int_0^{\f{t}{2}  } \| \nablah^{\alphah}\nablah \Gh(t-\tau) \|_{ L^p(\R^2) } \|\p_3^{\alpha_3}(u_k u_l)(\tau)   \|_{\Lv^q \Lh^1 (\R^3_+)  }   d\tau   \\
    &   \quad \quad +  \int_{\f{t}{2}}^{t} \| \nablah \Gh(t-\tau) \|_{ L^1(\R^2) }   \|\Grad^{\alpha} (u_k u_l) (\tau)  \|_{\Lv^q \Lh^p (\R^3_+)  }  d\tau      \\
    & \quad \leq  CA_{\ast}^2 \left( \int_0^{\f{t}{2}  } (t-\tau)^{ -(1-\f{1}{p}) -\f{1+|\alphah|}{2}  } (1+\tau)^{-1}d\tau 
    +   \int_{\f{t}{2}}^{t} (t-\tau)^{-\f{1}{2}} (1+\tau)^{ -(2- \f{1}{p} )-\f{|\alphah|}{2}   }  d\tau   \right) \\
    & \quad \leq CA_{\ast}^2 t^{ -(1-\f{1}{p}) -\f{1+|\alphah|}{2}  } \log(2+t)
\end{align*}
for $t \geq 1$ and 
\begin{align*}
    &    \left\| \Grad^{\alpha} \int_0^t   e^{(t-\tau) \Deltah }  \div_{\rm h} (\uh \otimes \uh)(\tau)     d \tau \right\|_{\Lv^q \Lh^p (\R^3_+)  }           \\
    &   \quad  \leq \int_0^t \| \nablah \Gh(t-\tau) \|_{ L^1(\R^2) }   \|\Grad^{\alpha} (u_k u_l) (\tau)  \|_{\Lv^q \Lh^p (\R^3_+)  }  d\tau       \\
    &   \quad \leq  CA_{\ast}^2 \int_0^t   (t-\tau)^{-\f{1}{2}} (1+\tau)^{ -(2- \f{1}{p} )-\f{|\alphah|}{2}   }  d\tau       \\
    & \quad \leq  CA_{\ast}^2
\end{align*}
for $0<t\leq 1$. Thus we see
\begin{equation}\label{lem5.2-2}
    \left\| \Grad^{\alpha} \int_0^t   e^{(t-\tau) \Deltah } \div_{\rm h} (\uh \otimes \uh)(\tau)     d \tau \right\|_{\Lv^q \Lh^p (\R^3_+)  }    
    \leq CA_{\ast}^2 (1+t)^{ -(1-\f{1}{p}) -\f{1+|\alphah|}{2}  } \log(2+t). 
\end{equation}

We next focus on $\mathcal{D}_3^{\rm h}[u](t)$. 
Due to the operator $W^{(+)}$ and Riesz operator therein, we cannot work in the anisotropic Lebesgue space $\Lv^{q}\Lh^p (\R^3_+)$. Instead, we adopt the Chemin--Lerner type space $\mathcal{L}^{p,q}_1(\mathbb{R}^3_+)$.
Using the definition of $W^{(+)}$, 
\begin{align*}
    &  W^{(+)}   e^{(t - \tau)\Deltah}
    \nablah
    \sum_{\ell,m=1}^2
    {S_{x_\ell}S_{x_m}}   
    (u_{\ell}u_{m})(\tau)         \\
    &   \quad  = \f{1}{2}\int_0^{\infty} 
       |\nablah|^{\f{1}{q}} e^{- |\nablah| |x_3-y_3|}      \Big[ |\nablah|^{1-\f{1}{q}}  e^{(t - \tau)\Deltah}  \nablah  \sum_{\ell,m=1}^2
    {S_{x_\ell}S_{x_m}}   
    (u_{\ell}u_{m})       \Big] (\tau,\xh,y_3) d y_3              \\
    &   \quad \quad +  \f{1}{2}\int_0^{\infty} 
     |\nablah|^{\f{1}{q}} e^{- |\nablah| (x_3+y_3)}  
     \Big[ |\nablah|^{1-\f{1}{q}} e^{(t - \tau)\Deltah}  \nablah  \sum_{\ell,m=1}^2
    {S_{x_\ell}S_{x_m}}   
    (u_{\ell}u_{m})       \Big] (\tau,\xh,y_3) d y_3   \\
    & \quad =: I_1 +I_2.
\end{align*}
We first estimate $I_1$. It follows from Lemma \ref{app-lemm-2} that for any $\ell,m\in \{1,2\}$  
\begin{align*}
   &
   \| \Delta_j^{\rm h}    I_1    (\tau,\cdot,x_3)\|_{L^p(\mathbb{R}^2)}\\
    &\quad \leq{}
    C  
    \int_0^{\infty}
    2^{\frac{1}{q}j}e^{-c2^j |x_3-y_3| }
    \left\| \left[\Delta_j^{\rm h}|\nablah|^{1-\frac{1}{q}}e ^{(t-\tau) \Delta_{\rm h}}  \nablah  (u_{\ell}u_{m} )  \right](\tau,\cdot,y_3)  \right\|_{L^p(\mathbb{R}^2)}dy_3.
\end{align*}
With the help of the Hausdorff--Young inequality, 
\begin{align}
    &\| I_1 (\tau)  \|_{\mathcal{L}^{p,q}_1(\mathbb{R}^3_+)}  \nonumber \\
    &\quad \leq{}
    C
    \sum_{j \in \mathbb{Z}}
    2^{\frac{1}{q}j}\|e^{-c2^j  |z_3|  }\|_{L^q_{z_3}(0,\infty)}
   \left \| \Delta_j^{\rm h}|\nablah|^{1-\frac{1}{q}}e ^{(t-\tau) \Delta_{\rm h}}  \nablah (u_{\ell}u_{m} )  (\tau)  \right\|_{L^1_{z_3}(0,\infty;L^p(\mathbb{R}^2))} \nonumber \\
    &\quad \leq{}
    C
    \sum_{j \in \mathbb{Z}}
    \left\| \Delta_j^{\rm h}|\nablah|^{1-\frac{1}{q}}e ^{(t-\tau) \Delta_{\rm h}}  \nablah (u_{\ell}u_{m} )  (\tau)  \right\|_{L^1_{x_3}(0,\infty;L^p(\mathbb{R}^2_{x_{\rm h}}))} \nonumber \\
    &\quad\leq{}
    C 
    \| e ^{(t-\tau) \Delta_{\rm h}}  \nablah (u_{\ell}u_{m} ) 
  (\tau) 
 \|_{{L^1_{x_3}}(0,\infty;\dot{B}_{p,1}^{1-\frac{1}{q}}(\mathbb{R}^2_{x_{\rm h}}))}.  \label{lem5.2-9}
\end{align}
On the one hand, for all $2\leq p,q \leq \infty$, we see 
\begin{align}
    &\| e ^{(t-\tau) \Delta_{\rm h}} \nablah  (u_{\ell}u_{m} )(\tau) \|_{L^1_{x_3}(0,\infty;\dot{B}_{p,1}^{1-\frac{1}{q}}(\mathbb{R}^2_{x_{\rm h}}))}   \nonumber \\
    &\quad=
    \sum_{j \in \mathbb{Z}}
    2^{(1-\frac{1}{q})j}
    \| 
    \Deltahh_j  e ^{(t-\tau) \Delta_{\rm h}}  \nablah  (u_{\ell}u_{m} ) (\tau) 
    \|_{L^1_{x_3}(0,\infty;L^p(\mathbb{R}^2_{x_{\rm h}}))} \nonumber \\ 
    &\quad\leq
    C
    \sum_{j \in \mathbb{Z}}
    2^{(1-\frac{1}{q})j}e^{-c2^{2j}(t-\tau)} 2^{j} 
    \| 
    \Deltahh_j (u_{\ell}u_{m})(\tau) 
    \|_{L^1_{x_3}(0,\infty;L^p(\mathbb{R}^2_{x_{\rm h}}))}  \nonumber \\
    &\quad\leq
    C
    \sum_{j \in \mathbb{Z}}
    2^{\{2(1-\frac{1}{p})+(1-\frac{1}{q})+1  \}j}e^{-c2^{2j} (t-\tau) }
    \| 
    \Deltahh_j (u_{\ell}u_{m})(\tau) 
    \|_{L^1(\mathbb{R}^3_+)}  \label{lem5.2-10}       \\
    &\quad\leq
    C
    \left( 
    \sum_{j \in \mathbb{Z}}
    2^{\{2(1-\frac{1}{p})+(1-\frac{1}{q}) +1\}j}e^{-c2^{2j}(t-\tau) }
    \right) 
    \sup_{j \in \mathbb{Z}}
    \| 
    \Deltahh_j (u_{\ell}u_{m})(\tau) 
    \|_{L^1(\mathbb{R}^3_+)}  \nonumber \\
    &\quad \leq{}
    C
    (t-\tau)^{-(1-\frac{1}{p})-\frac{1}{2}(1-\frac{1}{q})-\f{1}{2}}
    \| (u_{\ell}u_{m})(\tau)  \|_{\mathcal{L}^1_{\infty}(\mathbb{R}^3_+)}   \nonumber \\
    &\quad \leq {}
    C
    (t-\tau)^{-(1-\frac{1}{p})-\frac{1}{2}(1-\frac{1}{q})-\f{1}{2}}
    \|(u_{\ell}u_{m})(\tau) \|_{L^1(\mathbb{R}^3_+)};  \nonumber
\end{align}
on the other hand, for all $2\leq p,q \leq \infty$, it holds 
\begin{align}
    &\| e ^{(t-\tau) \Delta_{\rm h}} \nablah  (u_{\ell}u_{m})(\tau) \|_{L^1_{x_3}(0,\infty;\dot{B}_{p,1}^{1-\frac{1}{q}}(\mathbb{R}^2_{x_{\rm h}}))}  \nonumber \\
    &\quad=
    \sum_{j \in \mathbb{Z}}
    2^{(1-\frac{1}{q})j}
    \| 
    \Deltahh_j  e ^{(t-\tau) \Delta_{\rm h}} \nablah  (u_{\ell}u_{m})(\tau) 
    \|_{L^1_{x_3}(0,\infty;L^p(\mathbb{R}^2_{x_{\rm h}}))}  \nonumber \\ 
    &\quad\leq
    C
    \sum_{j \in \mathbb{Z}}
    2^{(1-\frac{1}{q})j}e^{-c2^{2j}(t-\tau)} 
    \| 
    \Deltahh_j (\nablah  (u_{\ell}u_{m}) ) (\tau)  
    \|_{L^1_{x_3}(0,\infty;L^p(\mathbb{R}^2_{x_{\rm h}}))} \label{lem5.2-11} \\
     &\quad\leq
    C
   \left(  \sum_{j \in \mathbb{Z}}
    2^{(1-\frac{1}{q})j}e^{-c2^{2j}(t-\tau)}  \right) \sup_{j\in \mathbb{Z}} 
    \| 
    \Deltahh_j (\nablah  (u_{\ell}u_{m}) ) (\tau)  
    \|_{L^1_{x_3}(0,\infty;L^p(\mathbb{R}^2_{x_{\rm h}}))}  \nonumber \\
    &\quad\leq
    C
    \left( \sum_{j \in \mathbb{Z}}
     2^{(1-\frac{1}{q})j} e^{-c2^{2j} (t-\tau) } \right)
    \| 
    \nablah  (u_{\ell}u_{m})(\tau) 
    \|_{L^1_{x_3}(0,\infty;L^p(\mathbb{R}^2_{x_{\rm h}}))}  \nonumber  \\
    &\quad \leq {}
    C
    (t-\tau)^{-\frac{1}{2}(1-\frac{1}{q}) }
   \| 
    \nablah  (u_{\ell}u_{m})(\tau) 
    \|_{L^1_{x_3}(0,\infty;L^p(\mathbb{R}^2_{x_{\rm h}}))} .  \nonumber
\end{align}
Next, it follows from Lemma \ref{decay-duhamel-1} that 
\begin{align}
   \|(u_{\ell}u_{m})(\tau) \|_{L^1(\mathbb{R}^3_+)}     & \leq \|\uh(\tau) \|_{L^2(\mathbb{R}^3_+)}  \|\uh(\tau) \|_{L^2(\mathbb{R}^3_+)}    \leq  CA_{\ast}^2 (1+\tau)^{-1},     \label{lem5.2-12} \\    
  \| 
    \nablah  (u_{\ell}u_{m})(\tau) 
    \|_{\Lv^{1} \Lh^{p} (\R^3_+) }    &   \leq 2  \| \nablah \uh \|_{\Lv^{2} \Lh^{\infty} (\R^3_+) }   \|  \uh \|_{\Lv^{2} \Lh^{p} (\R^3_+) }   \leq CA_{\ast}^2 (1+\tau)^{- (\f{5}{2} -\f{1}{p})  } . \nonumber    
\end{align}
Collecting \eqref{lem5.2-9}-\eqref{lem5.2-12}, 
\begin{align*}
    &  \left\|    \int_0^t I_1 (\tau)        d \tau   \right\|_{\mathcal{L}^{p,q}_1(\mathbb{R}^3_+)}          \\
    &  \quad  \leq  \int_0^{\f{t}{2}}  \| I_1  (\tau)  \|_{\mathcal{L}^{p,q}_1(\mathbb{R}^3_+)}      d \tau
    + \int_{\f{t}{2}}^{t} \| I_1  (\tau)  \|_{\mathcal{L}^{p,q}_1(\mathbb{R}^3_+)}    d \tau          \\
    &   \quad \leq  C  \int_0^{\f{t}{2}}  (t-\tau)^{-(1-\frac{1}{p})-\frac{1}{2}(1-\frac{1}{q})-\f{1}{2}}
    \|(u_{\ell}u_{m})(\tau) \|_{L^1(\mathbb{R}^3_+)}  d \tau   \\
    & \quad \quad +C \int_{\f{t}{2}}^{t}    (t-\tau)^{-\frac{1}{2}(1-\frac{1}{q}) }
   \| 
    \nablah  (u_{\ell}u_{m})(\tau) 
    \|_{\Lv^{1} \Lh^{p} (\R^3_+) }    d\tau \\
    & \quad \leq CA_{\ast}^2 \left(  \int_0^{\f{t}{2}}  (t-\tau)^{-(1-\frac{1}{p})-\frac{1}{2}(1-\frac{1}{q})-\f{1}{2}}
   (1+\tau)^{-1}  d \tau  +  \int_{\f{t}{2}}^{t}    (t-\tau)^{-\frac{1}{2}(1-\frac{1}{q}) }   (1+\tau)^{ -(\f{5}{2} -\f{1}{p})   }       d\tau    \right) \\
   & \quad \leq CA_{\ast}^2  t^{-(1-\frac{1}{p})-\frac{1}{2}(1-\frac{1}{q})-\f{1}{2}} \log(2+t)  
\end{align*}
for all $t\geq 1$ and 
\begin{align*}  
    &    \left\|    \int_0^t  I_1 (\tau)        d \tau   \right\|_{\mathcal{L}^{p,q}_1(\mathbb{R}^3_+)}          
     \leq \int_0^t    (t-\tau)^{-\frac{1}{2}(1-\frac{1}{q}) }
   \| 
    \nablah  (u_{\ell} u_m)(\tau) 
    \|_{\Lv^{1} \Lh^{p} (\R^3_+) }    d\tau      \\
    &   \quad  \leq  CA_{\ast}^2 \int_0^t    (t-\tau)^{-\frac{1}{2}(1-\frac{1}{q}) }   (1+\tau)^{ -(\f{5}{2} -\f{1}{p}) }   d\tau      \leq 
     CA_{\ast}^2
\end{align*}
for all $0<t\leq 1$. These two relations together imply
\begin{equation}\label{lem5.2-13} 
     \left\|    \int_0^t I_1(\tau)        d \tau   \right\|_{\mathcal{L}^{p,q}_1(\mathbb{R}^3_+)} 
      \leq CA_{\ast}^2  (1+t)^{-(1-\frac{1}{p})-\frac{1}{2}(1-\frac{1}{q})-\f{1}{2}} \log(2+t) . 
\end{equation}
Observe that $I_2$ is bounded similarly. Indeed, it suffices to notice 
\begin{align*}
   &
   \| \Delta_j^{\rm h}    I_2   (\tau,\cdot,x_3)\|_{L^p(\mathbb{R}^2)}\\
    &\quad \leq{}
    C   2^{\frac{1}{q}j}   e^{-c2^j x_3 }
    \int_0^{\infty}
   e^{-c2^j y_3 }
    \left\| \left[\Delta_j^{\rm h}|\nablah|^{1-\frac{1}{q}}e ^{(t-\tau) \Delta_{\rm h}}  \nablah  (u_{\ell}u_{m} )  \right](\tau,\cdot,y_3)  \right\|_{L^p(\mathbb{R}^2)}dy_3  \nonumber \\  
    & \quad \leq{}
    C   2^{\frac{1}{q}j}   e^{-c2^j x_3 } 
     \left \| \Delta_j^{\rm h}|\nablah|^{1-\frac{1}{q}}e ^{(t-\tau) \Delta_{\rm h}}  \nablah (u_{\ell}u_{m} )  (\tau)  \right\|_{L^1_{y_3}(0,\infty;L^p(\mathbb{R}^2))} ;    \nonumber
\end{align*}
whence
\begin{align}
    &\| I_2 (\tau)  \|_{\mathcal{L}^{p,q}_1(\mathbb{R}^3_+)}  \nonumber \\
    &\quad \leq{}
    C
    \sum_{j \in \mathbb{Z}}
    2^{\frac{1}{q}j}\|e^{-c 2^j  x_3  }\|_{L^q_{x_3}(0,\infty)}
   \left \| \Delta_j^{\rm h}|\nablah|^{1-\frac{1}{q}}e ^{(t-\tau) \Delta_{\rm h}}  \nablah (u_{\ell}u_{m} )  (\tau)  \right\|_{L^1_{y_3}(0,\infty;L^p(\mathbb{R}^2))} \nonumber \\
    &\quad \leq{}
    C
    \sum_{j \in \mathbb{Z}}
    \left\| \Delta_j^{\rm h}|\nablah|^{1-\frac{1}{q}}e ^{(t-\tau) \Delta_{\rm h}}  \nablah (u_{\ell}u_{m} )  (\tau)  \right\|_{L^1_{x_3}(0,\infty;L^p(\mathbb{R}^2_{x_{\rm h}}))} \nonumber \\
    &\quad\leq{}
    C 
    \| e ^{(t-\tau) \Delta_{\rm h}}  \nablah (u_{\ell}u_{m} ) 
  (\tau) 
 \|_{{L^1_{x_3}}(0,\infty;\dot{B}_{p,1}^{1-\frac{1}{q}}(\mathbb{R}^2_{x_{\rm h}}))}. \nonumber
\end{align}
This gives the decay estimate of $\|\Grad^{\alpha} \mathcal{D}_3^{\rm h}[u](t) \|_{\mathcal{L}^{p,q}_1(\mathbb{R}^3_+)} $ in case of $|\alpha|=0$. In order to obtain the case of $|\alpha|=1$, we invoke the following useful property
\begin{align*}
    &    \p_j [ W^{(+)} f  ]= W^{(+)} [\p_j f] , \,\,\,\, j=1,2,       \\
    &  \p_3 [ W^{(+)} f  ]= \f{1}{2}  \int_0^{\infty}
    |\nablah| e^{- |\nablah| |x_3-y_3|  } \p_3 f (\xh,y_3)
    d y_3   \\
    &   \quad \quad \quad \quad  \quad \quad \quad   - |\nablah|  \f{1}{2}  \int_0^{\infty}        |\nablah| e^{- |\nablah| (x_3+y_3)  }    f (\xh,y_3)
    d y_3  .
\end{align*}
The details are omitted here. 

Calculating as above, one gets the decay estimates for the remaining integrals. We shall not reproduce the details for brevity. The proof of Lemma \ref{decay-duhamel-2} is finished.
\end{proof}

\section{Proof of Theorem \ref{main-theorem} in case of \texorpdfstring{$2\leq p,q \leq \infty$}{} } \label{main-thm-1}

To begin with, we state the following global well-posedness for the anisotropic Navier--Stokes equations in the half-space. 
\begin{lemm}\label{strong-sol}
Let $k \geq 2$ be an integer. There exists a positive constant $\delta_0=\delta_0(k)$ such that for any $u_0 \in H^k(\R^3_+)$ with $\div u_0 =0, u_0|_{\p \R^3_+}=0,  \|  u_0 \|_{H^k(\R^3_+)   } \leq \delta_0$, \eqref{eq1} admits a unique solution $u \in C ([0,\infty);  H^k(\R^3_+) )$. Moreover,
\[
\|  u(t) \|_{H^k(\R^3_+)   }^2 + \int_0^t \| \nablah u(\tau) \|_{H^k(\R^3_+)   }^2  d \tau \leq C \|  u_0 \|_{H^k(\R^3_+)   }^2
\]
for any $t\geq 0$. 
\end{lemm}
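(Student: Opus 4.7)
The plan is to prove local well-posedness first and then extend to global existence via a priori $H^k$ estimates under the smallness hypothesis. For local existence I would use the Duhamel formula from Corollary \ref{lemm:sol-form-u} and set up a standard contraction mapping in $C([0,T]; H^k(\R^3_+))$ for some short $T = T(\|u_0\|_{H^k})$. This requires only the boundedness of the anisotropic Stokes semigroup on $H^k$ (which follows from the explicit representation in Lemma \ref{lemm:sol-form} together with the mapping properties of $U$, $S$, $V^{(\pm)}$, $W^{(\pm)}$, $T$ established in Lemmas \ref{lemm-U} and \ref{bdd-VWT}) and a Lipschitz bound on the nonlinear map $u \mapsto u\otimes u$ on balls in $H^k$, itself a consequence of $H^k(\R^3_+) \hookrightarrow L^\infty(\R^3_+)$ for $k \geq 2$ combined with Moser-type product estimates. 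A Friedrichs regularization scheme would work equally well and may be more convenient for respecting the Dirichlet boundary condition throughout.

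For the global a priori estimate, I would first test \eqref{eq1} against $u$ to obtain the baseline identity
\begin{equation*}
\tfrac{1}{2}\tfrac{d}{dt}\|u\|_{L^2}^2 + \|\nablah u\|_{L^2}^2 = 0,
\end{equation*}
using $u|_{\p\R^3_+}=0$ and $\div u = 0$ to eliminate the pressure and transport contributions. For higher derivatives I apply $\p^\alpha$ with $|\alpha|\leq k$ and test against $\p^\alpha u$; the leading transport term $\int_{\R^3_+}(u\cdot\nabla)\p^\alpha u \cdot \p^\alpha u \, dx$ vanishes by integration by parts because $u\cdot n|_{\p\R^3_+}=-u_3|_{\p\R^3_+}=0$ together with $\div u = 0$. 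The remaining commutator $[\p^\alpha, u\cdot\nabla]u$ and the pressure piece are estimated by Kato--Ponce--type inequalities together with the incompressibility substitution $\p_3 u_3 = -\nablah\cdot \uh$, which converts purely vertical top-order derivatives of $u_3$ into horizontal derivatives of $\uh$ that are controlled by the dissipation. Combining, one arrives at
\begin{equation*}
\tfrac{d}{dt}\|u\|_{H^k}^2 + 2\|\nablah u\|_{H^k}^2 \leq C\|u\|_{H^k}\|\nablah u\|_{H^k}^2.
\end{equation*}
Under the smallness hypothesis with $C\delta_0 \leq 1$, a continuity argument in time yields $\|u(t)\|_{H^k} \leq 2\delta_0$ for all $t\geq 0$, and then absorbing the right-hand side produces the stated integral estimate; coupled with local existence, a standard continuation argument gives a global solution.

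The main obstacle is closing the $H^k$ energy estimate with only horizontal dissipation available: nonlinear terms like $u_3\p_3 u$ contain a purely vertical derivative that is not a priori dominated by $\|\nablah u\|_{H^k}$. The resolution relies simultaneously on two structural features of the system: the incompressibility constraint $\p_3 u_3=-\nablah\cdot\uh$ exchanges each vertical derivative of $u_3$ for horizontal derivatives of $\uh$, and the no-slip boundary condition $u|_{\p\R^3_+}=0$ eliminates boundary terms arising from integration by parts in the $x_3$-direction, allowing vertical derivatives to be freely redistributed onto other factors. The pressure, which does not vanish on $\p\R^3_+$, is then controlled via the explicit formula from Lemma \ref{lemm:sol-form} (or equivalently the Leray projection built from the operators $U$, $S$, $V^{(\pm)}$, $W^{(\pm)}$, $T$), ensuring that $\nabla p$ contributes at the level of a zeroth-order operator in the above estimates.
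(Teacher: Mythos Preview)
The paper does not actually prove this lemma: it simply cites \cite{JWY21} (the whole-space analogue) and asserts that the half-space case follows ``with slight modifications,'' omitting all details. Your outline is therefore already more substantive than what the paper provides, and the overall architecture---local existence plus the anisotropic energy inequality
\[
\tfrac{d}{dt}\|u\|_{H^k}^2 + 2\|\nablah u\|_{H^k}^2 \leq C\|u\|_{H^k}\|\nablah u\|_{H^k}^2
\]
closed by smallness and a continuation argument---is precisely the strategy of \cite{JWY21} on $\R^3$. (One minor remark: the commutator bounds that produce the right-hand side above rely on anisotropic triple-product inequalities of Ladyzhenskaya type rather than Kato--Ponce per se, since every factor must pick up half a horizontal derivative.)

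The one place where your sketch does not close, and which is presumably what the paper's unspecified ``slight modifications'' must address, is the pressure contribution when $\alpha$ carries two or more vertical derivatives. Integrating $\int_{\R^3_+}\p^{\alpha}\nabla p\cdot\p^{\alpha}u$ by parts leaves the trace $-\int_{\p\R^3_+}\p^{\alpha}p\,\p^{\alpha}u_3\,d\xh$. For $\alpha_3\in\{0,1\}$ this vanishes (using $u_3|_{\p}=0$, $\p_3 u_3=-\nablah\!\cdot u_{\rm h}$, and $\p_3 p|_{\p}=0$, the latter following from the $u_3$-equation restricted to the boundary). For $\alpha_3\geq 2$, however, neither $\p^{\alpha}u_3$ nor $\p^{\alpha}p$ need vanish on $\p\R^3_+$, and your appeal to Lemma~\ref{lemm:sol-form} does not settle this: that lemma represents $u$, not $p$, and the phrase ``$\nabla p$ contributes at the level of a zeroth-order operator'' is too loose to absorb into the right-hand side. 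The usual remedy on a half-space is to run the energy argument only for multi-indices with $\alpha_3\leq 1$ (where the boundary term does vanish), and then recover the remaining normal regularity a posteriori: $\p_3^{m}u_3$ via $\p_3 u_3=-\nablah\!\cdot u_{\rm h}$, and $\p_3^{m}u_{\rm h}$ by combining the elliptic pressure problem $\Delta p=-\p_i u_j\,\p_j u_i$, $\p_3 p|_{\p}=0$ with the evolution equation for $u_{\rm h}$, or alternatively by reading normal regularity directly off the Duhamel representation in Corollary~\ref{lemm:sol-form-u}. Your proposal gestures at this in its final paragraph but does not name the mechanism precisely.
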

The proof of Lemma \ref{strong-sol} follows from \cite{JWY21} with slight modifications. The details are thus omitted. In order to obtain the decay estimates of solutions, we additionally assume that the initial velocity belongs to the Chemin--Lerner type space and further obtain the following lemma.

\begin{lemm}\label{strong-sol-2}
Let $s \geq 3$ be an integer. There exists a positive constant $\delta_0=\delta_0(s)$ such that for any $u_0 \in X^s(\R^3_+)$ with $\div u_0 =0,u_0|_{\p \R^3_+}=0, \|  u_0 \|_{H^s(\R^3_+)   } \leq \delta_0$, \eqref{eq1} admits a unique solution { $u \in C ((0,\infty);  X^s(\mathbb{R}^3_+) )$}. 
\end{lemm}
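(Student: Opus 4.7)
The plan is to combine Lemma \ref{strong-sol} with a propagation argument for the Chemin--Lerner part of the $X^s$-norm via the explicit solution formula of Corollary \ref{lemm:sol-form-u}. Let $u$ be the unique global $H^s$-solution furnished by Lemma \ref{strong-sol} under the smallness $\|u_0\|_{H^s(\R^3_+)}\leq \delta_0$. What remains is to verify that, whenever $u_0$ additionally lies in $\mathcal{L}_{\infty}^{1}\cap \mathcal{L}_{\infty}^{1,\infty}$ together with $\p_3 u_0$, the corresponding norms of $u(t)$ and $\p_3 u(t)$ remain finite on every interval $[0,T]$. Since the $H^s$-bound is already global, no further smallness on the Chemin--Lerner part of $u_0$ is required: only a local propagation of regularity is at stake.

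We would feed $u$ into the representation of Corollary \ref{lemm:sol-form-u}. The linear contribution is handled at once by Lemma \ref{lemm-U} (boundedness of $U$ and the Riesz transform $S$) combined with the trivial boundedness of $e^{t\Deltah}$ on $\mathcal{L}_{\sigma}^{p,q}(\R^3_+)$, which immediately yields the desired control on the linear parts of $u$ and $\p_3 u$. For each Duhamel piece $\mathcal{D}_m^{\rm h}[u]$ or $\mathcal{D}_m^{\rm v}[u]$ we combine Lemma \ref{bdd-VWT} (uniform boundedness of $V^{(\pm)},W^{(\pm)},T$) with the horizontal heat smoothing of Lemma \ref{lemm-heat}, which absorbs the tangential derivative systematically appearing in the integrand at the price of the locally integrable singularity $(t-\tau)^{-1/2}$. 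Matters then reduce to controlling $u\otimes u$ and $u_3\uh$ in suitable anisotropic $L^1$-type norms, which we bound via H\"older and the Sobolev embedding $H^s(\R^3_+)\hookrightarrow L^{\infty}(\R^3_+)$ available for $s\geq 3$.

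The main obstacle is the propagation of the $\mathcal{L}_{\infty}^{1}$-bound on $\p_3 u$: because $e^{t\Deltah}$ provides no vertical smoothing, any $\p_3$ falling on a Duhamel term must be distributed onto the nonlinearity in the form $\p_3(u\otimes u)=(\p_3 u)\otimes u + u\otimes(\p_3 u)$, with one factor absorbed in $L^{\infty}$ through $H^s$ and the other in $\mathcal{L}_{\infty}^{1,q}$ through the norm we wish to close. Extra care is needed when $\p_3$ hits the non-local operators $W^{(+)},V^{(\pm)}$ or the boundary kernel $e^{-x_3|\nablah|}$, since these do not commute with $\p_3$; we would use identities of the type
\begin{align*}
    \p_3 W^{(+)}f
    &= \f12\int_0^{\infty}|\nablah|e^{-|\nablah||x_3-y_3|}\p_3 f(\xh,y_3)\,dy_3 \\
    &\quad - \f12 |\nablah|\int_0^{\infty}|\nablah|e^{-|\nablah|(x_3+y_3)}f(\xh,y_3)\,dy_3,
\end{align*}
together with analogous relations for $V^{(\pm)}$ and $e^{-x_3|\nablah|}$, to trade a vertical derivative for a tangential one that is absorbed by the horizontal smoothing.

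Collecting these nonlinear estimates and invoking the global $H^s$-bound of Lemma \ref{strong-sol}, a direct argument on any interval $[0,T]$ yields
\[
\sum_{\alphav=0}^{1}\|\p_3^{\alphav}u(t)\|_{\mathcal{L}_{\infty}^{1}\cap \mathcal{L}_{\infty}^{1,\infty}(\R^3_+)} \leq C\bigl(T,\|u_0\|_{X^s(\R^3_+)}\bigr),\qquad t\in(0,T],
\]
which combined with the already available $H^s$-bound establishes $u\in C((0,\infty);X^s(\R^3_+))$. Continuity in time follows from the integral representation and dominated convergence, and uniqueness in $X^s\hookrightarrow H^s$ is inherited from Lemma \ref{strong-sol}.
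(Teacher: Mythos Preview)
Your overall architecture is right and shares the paper's skeleton: invoke Lemma~\ref{strong-sol} for the global $H^s$ solution, then feed it into the representation of Corollary~\ref{lemm:sol-form-u} and propagate the Chemin--Lerner part using Lemmas~\ref{lemm-U}, \ref{lemm-heat}, \ref{bdd-VWT}. The treatment of $\p_3$ hitting the non-local operators via the explicit commutator identity is exactly what the paper does as well.

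Where you diverge from the paper is in the closure of the nonlinear part. You propose to absorb the tangential derivative by the heat smoothing (paying $(t-\tau)^{-1/2}$) and then run a Gronwall-type argument, placing one factor of $\p_3(u\otimes u)$ in $L^\infty$ and the other in the norm $\mathcal{L}^{1,q}_\infty$ you are propagating. The paper instead makes no use of heat smoothing here and never loops back on the Chemin--Lerner norm: it applies the elementary embeddings $L^1(\R^3_+)\hookrightarrow\mathcal{L}^1_\infty(\R^3_+)$ and $\Lh^1\Lv^\infty(\R^3_+)\hookrightarrow\mathcal{L}^{1,\infty}_\infty(\R^3_+)$ \emph{before} estimating the product, so that everything reduces to Lebesgue norms controlled by $\|u\|_{H^2}^2$ and $\|u\|_{H^3}^2$. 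All derivatives (up to two, plus one more for the vertical $L^\infty$) are placed on the nonlinearity directly, producing simply the linear-in-$t$ bound
\[
\sum_{\alpha_3=0}^{1}\|\p_3^{\alpha_3}\text{(Duhamel)}\|_{\mathcal{L}^1_\infty\cap\mathcal{L}^{1,\infty}_\infty}\leq Ct\,\|u_0\|_{H^3}^2\leq Ct\,\|u_0\|_{X^s}.
\]
No Gronwall is needed.

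Your route is not wrong, but it hides a nontrivial step: the product inequality $\|fg\|_{\mathcal{L}^{1,q}_\infty}\leq C\|f\|_{L^\infty}\|g\|_{\mathcal{L}^{1,q}_\infty}$ is \emph{not} automatic, because $\Deltahh_j$ does not commute with multiplication and the paraproduct piece $T_g f$ does not close with only $f\in L^\infty$. It can be salvaged by exploiting the extra horizontal regularity of $u\in H^s$ (roughly $u(\cdot,x_3)\in\dot B^0_{\infty,1}(\R^2)$), but this is work you have not written down. The paper's embedding-first argument sidesteps the issue entirely; if you want a clean version of your proof, replace the Gronwall loop by that embedding step and close directly on $\|u\|_{H^s}^2$.
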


\begin{proof}
On the one hand, we consider the linear part. We see by Corollary \ref{cor:semigr} and the crucial relation $L^2(\mathbb{R}^3_+) = \mathcal{L}_2^{2,2}(\mathbb{R}^3_+)$, which indeed can be proved by a similar manner of the proof for the classical relation $L^2(\mathbb{R}^d)=\dot{B}_{2,2}^0(\mathbb{R}^d)$, that
\begin{align*}
    \| e^{-tA}u_0 \|_{X^s(\R^3_+)} 
    &
    \leq
    C
    \sum_{|\alpha| \leq s}
    \| \nabla^{\alpha} e^{-tA}u_0 \|_{\mathcal{L}_2^{2,2}(\mathbb{R}^3_+)}
    +
    \sum_{\alpha_3=0}^1 \| \p_3^{\alphav} e^{-tA}u_0 \|_{\mathcal{L}_{\infty}^{1}\cap \mathcal{L}_{\infty}^{1,\infty} (\R^3_+)    }\\
    &
    \leq
    C
    \sum_{|\alpha| \leq s}
    \| \nabla^{\alpha}u_0 \|_{\mathcal{L}_2^{2,2}(\mathbb{R}^3_+)}
    +
    C
    \sum_{\alpha_3=0}^1 \| \p_3^{\alphav} u_0 \|_{\mathcal{L}_{\infty}^{1}\cap \mathcal{L}_{\infty}^{1,\infty}  (\R^3_+)   }\\
    &
    = C \| u_0 \|_{X^s   (\R^3_+)  }.   
\end{align*}

On the other hand, we handle the nonlinear part. 
We also see by Lemmas \ref{lemm-U}-\ref{lemm-heat}-\ref{bdd-VWT}, Corollary \ref{lemm:sol-form-u} and the continuous embedding $L^1(\mathbb{R}^3_+) \hookrightarrow \mathcal{L}^1_{\infty}(\mathbb{R}^3_+)$ that 
\begin{align*}
  &  \sum_{\alpha_3=0}^{1}
    \left \|
    \p_3^{\alpha_3}
    \int_0^t 
    e^{-(t-\tau)A}\mathbb{P}\div (u \otimes u)(\tau) d\tau
    \right \|_{\mathcal{L}^1_{\infty}  (\R^3_+) 
 }  \\
    &  \quad \quad  \leq
    C
    \sum_{|\alpha|\leq 2}
    \sum_{k,\ell=1}^3
    \int_0^t
    \| \nabla^{\alpha} (u_ku_{\ell})(\tau)\|_{\mathcal{L}^1_{\infty} (\R^3_+) 
 }d\tau\\
    & \quad \quad      \leq
    C
    \sum_{|\alpha|\leq 2}
    \sum_{k,\ell=1}^3
    \int_0^t
    \| \nabla^{\alpha} (u_ku_{\ell})(\tau)\|_{L^1 (\R^3_+)  }d\tau\\
    & \quad \quad  \leq
    C
    \int_0^t
    \| u(\tau) \|_{H^2(\R^3_+)}  ^2 d\tau\\
    &  \quad \quad   \leq
    C t \| u_0 \|_{H^2 (\R^3_+) }^2      \,\,\,  \text{   (due to Lemma \ref{strong-sol}) }                 \\
     & \quad \quad    \leq  Ct \| u_0\|_{X^s (\R^3_+) } . \,\,\, \text{   (due to $\|  u_0 \|_{H^s(\R^3_+)   } \leq \delta_0$) }   
\end{align*}
By Lemmas \ref{lemm-U}-\ref{lemm-heat}-\ref{bdd-VWT} and the continuous embeddings $\Lh^1\Lv^{\infty}(\mathbb{R}^3_+) \hookrightarrow \Lv^{\infty}\Lh^1(\mathbb{R}^3_+) \hookrightarrow \mathcal{L}^{1,\infty}_{\infty}(\mathbb{R}^3_+)$ and $H^1(\mathbb{R}_{x_3 >0}) \hookrightarrow L^{\infty}(\mathbb{R}_{x_3>0})$ that 
\begin{align*}
  &  \sum_{\alpha_3=0}^{1}
    \left \|
    \p_3^{\alpha_3}
    \int_0^t 
    e^{-(t-\tau)A}\mathbb{P}\div ( u \otimes u)(\tau) d\tau
    \right \|_{\mathcal{L}^{1,\infty}_{\infty}(\R^3_+)} \\
    & \quad \quad  \leq
    C
    \sum_{|\alpha|\leq 2}
    \sum_{k,\ell=1}^3
    \int_0^t
    \| \nabla^{\alpha} (u_ku_{\ell})(\tau)\|_{\mathcal{L}^{1,\infty}_{\infty}(\R^3_+)}d\tau\\
    &\quad \quad   \leq
    C
    \sum_{|\alpha|\leq 2}
    \sum_{k,\ell=1}^3
    \int_0^t
    \| \nabla^{\alpha} (u_k u_{\ell})(\tau)\|_{\Lh^1\Lv^{\infty}(\R^3_+)}d\tau\\
    & \quad \quad  \leq
    C
    \sum_{|\alpha|\leq 2}
    \int_0^t
    \| \nabla^{\alpha}u(\tau) \|_{\Lh^2\Lv^{\infty}(\R^3_+)}^2 d\tau\\
    &\quad \quad \leq
    C
    \sum_{|\alpha|\leq 3}
    \int_0^t
    \| \nabla^{\alpha}u(\tau) \|_{L^2(\R^3_+)}^2 d\tau\\
  & \quad \quad \leq
    Ct \| u_0 \|_{H^3(\R^3_+)}^2   \,\,\,  \text{   (due to Lemma \ref{strong-sol}) }          \\
     &\quad \quad \leq  Ct \| u_0\|_{X^s(\R^3_+)} . \,\,\, \text{   (due to $\|  u_0 \|_{H^s(\R^3_+)   } \leq \delta_0$) }       
\end{align*}
Thus, we complete the proof.
\end{proof}

In order to complete the proof of Theorem \ref{main-theorem} in case of $2\leq p,q\leq \infty$, it remains to show the optimal decay estimates. We adopt the classical bootstrapping argument. To this end, we know from the linear estimates, namely Corollaries \ref{cor2.6}-\ref{cor2.7}, that there exists $C_1>0$ such that for any $\alpha=(\alphah,\alpha_3) \in (\mathbb{N}\cup \{ 0 \} )^2 \times (\mathbb{N}\cup \{ 0 \} ) $ with $|\alpha| \leq 1 $
\begin{align}
    &    \| \nabla^{\alpha} (e^{-tA}u_{0})_{\rm h}   \| _{\Lv^q\Lh^p (\R^3_+) }  \leq      C_1  \left(  t^{ -(1-\f{1}{p})  -\f{|\alphah|}{2}         } +   t^{ - (1-\f{1}{p})-\f{1}{2}(1-\f{1}{q})   -\f{|\alphah|}{2}         }   \right) \| u_0 \|_{X^s(\R^3_+)   }    , \nonumber     \\
    &  \| \nablah^{\alphah}  (e^{-tA}u_{0})_{3} \|_{\Lv^q\Lh^p (\R^3_+)}  \leq       C_1  t^{ - (1-\f{1}{p})-\f{1}{2}(1-\f{1}{q})   -\f{|\alphah|}{2}         }  \| u_0 \|_{X^s(\R^3_+)   } \label{0.1}
\end{align}
for all $0<t<T_{\ast}$, $2\leq  p,q \leq \infty$.

Suppose that for any $\alpha=(\alphah,\alpha_3) \in (\mathbb{N}\cup \{ 0 \} )^2 \times (\mathbb{N}\cup \{ 0 \} ) $ with $|\alpha| \leq 1 $
\begin{align}
    &    \| \nabla^{\alpha} \uh(t)   \| _{\Lv^q\Lh^p (\R^3_+) }  \leq     2 C_1  \left(  t^{ -(1-\f{1}{p})  -\f{|\alphah|}{2}         } +   t^{ - (1-\f{1}{p})-\f{1}{2}(1-\f{1}{q})   -\f{|\alphah|}{2}         }   \right) \| u_0 \|_{X^s(\R^3_+)   }    ,   \nonumber   \\
    &  \| \nablah^{\alphah}  u_3(t) \|_{\Lv^q\Lh^p (\R^3_+)}  \leq      2 C_1  t^{ - (1-\f{1}{p})-\f{1}{2}(1-\f{1}{q})   -\f{|\alphah|}{2}         }  \| u_0 \|_{X^s(\R^3_+)   }  \label{0.2} 
\end{align}
for all $0<t<T_{\ast}$, $2\leq  p,q \leq \infty$. Based on \eqref{0.1}, it then follows from Corollary \ref{lemm:sol-form-u} and Lemma \ref{decay-duhamel-2}, upon setting $A_{\ast}=2 C_1 \| u_0 \|_{X^s(\R^3_+)   } $, that 
for any $\alpha=(\alphah,\alpha_3) \in (\mathbb{N}\cup \{ 0 \} )^2 \times (\mathbb{N}\cup \{ 0 \} ) $ with $|\alpha| \leq 1 $
\begin{align*}
   \| \nabla^{\alpha} \uh(t)   \| _{\Lv^q\Lh^p (\R^3_+) }  &     \leq      C_1  \left(  t^{ -(1-\f{1}{p})  -\f{|\alphah|}{2}         } +   t^{ - (1-\f{1}{p})-\f{1}{2}(1-\f{1}{q})   -\f{|\alphah|}{2}         }   \right) \| u_0 \|_{X^s(\R^3_+)   }          \\
    &  \quad + C_2 (1+t)^{-(1-\f{1}{p})  -\f{|\alphah|}{2}   }     \| u_0 \|_{X^s(\R^3_+)   } ^2      \\
    &  \leq \f{3}{2} C_1         \left(  t^{ -(1-\f{1}{p})  -\f{|\alphah|}{2}         } +   t^{ - (1-\f{1}{p})-\f{1}{2}(1-\f{1}{q})   -\f{|\alphah|}{2}         }   \right) \| u_0 \|_{X^s(\R^3_+)   }   ,                  \\
  \| \nablah^{\alphah}  u_3(t) \|_{\Lv^q\Lh^p (\R^3_+)}  &    \leq       C_1  t^{ - (1-\f{1}{p})-\f{1}{2}(1-\f{1}{q})   -\f{|\alphah|}{2}         }  \| u_0 \|_{X^s(\R^3_+)   }   \\
  & \quad + C_2 (1+t)^{  - (1-\f{1}{p})-\f{1}{2}(1-\f{1}{q})   -\f{|\alphah|}{2}      }   \| u_0 \|_{X^s(\R^3_+)   } ^2  \\
  &   \leq \f{3}{2} C_1 t^{ - (1-\f{1}{p})-\f{1}{2}(1-\f{1}{q})   -\f{|\alphah|}{2}         }  \| u_0 \|_{X^s(\R^3_+)   } 
\end{align*}
for all $0<t<T_{\ast}$, $2\leq  p,q \leq \infty$. Here, we have chosen the initial data so small that 
\begin{equation}\label{0.3}
\| u_0 \|_{X^s(\R^3_+)   } \leq \min\left\{ \delta_0,\f{C_1}{2C_2} \right\}. 
\end{equation}

Therefore, we have obtained a sharper bound than our hypothesis. The bootstrapping argument then assesses that the optimal decay estimates \eqref{0.2} actually hold for all $t>0$ under the smallness condition \eqref{0.3}. Furthermore, we infer from Lemma \ref{strong-sol} and Sobolev embeddings that 
\begin{align}
    \| \nabla^{\alpha} \uh(t)   \| _{\Lv^q\Lh^p (\R^3_+) } ,
     \| \nablah^{\alphah}  u_3(t) \|_{\Lv^q\Lh^p (\R^3_+)}    \leq   C \| u_0 \|_{H^s(\R^3_+)   } \label{0.4} 
\end{align}
for all $0<t<T_{\ast}$, $2\leq  p,q \leq \infty$. 
Combining \eqref{0.2} and \eqref{0.4}, we obtain \eqref{decay-h-1} and \eqref{decay-3-1}.


\section{Decay estimates for the Duhamel terms in case of \texorpdfstring{$1 \leq  p,q < 2$}{} }\label{decay-less-2}

Based on Corollary \ref{lemm:sol-form-u} and the decay estimates of nonlinear solutions obtained in Section \ref{main-thm-1}, we derive the decay estimates of Duhamel terms in case of $1 \leq  p,q < 2$. 

\begin{lemm}\label{decay-duhamel-leq-2} 
There exists a pure constant $C>0$ such that for any $1\leq p,q <2, 0<t<\infty$ and any $\alpha=(\alphah,\alpha_3) \in (\mathbb{N}\cup \{ 0 \} )^2 \times (\mathbb{N}\cup \{ 0 \} ) $ with $|\alpha| \leq 1 $ 
\begin{align*}
 \| \Grad^{\alpha} \mathcal{D}_1^{\rm v}[u](t)    \|_{ \Lv^q\Lh^p (\R^3_+)  } \leq    &     C \| u_0 \|_{X^s(\R^3_+)   } ^2  (1+t)^ {- (1-\f{1}{p})-\f{1}{2}(1-\f{1}{q})-\f{1+|\alphah|}{2}  }  \log(2+t) ,    \\  
   \| \Grad^{\alpha} \mathcal{D}_2^{\rm v}[u](t)    \|_{ \Lv^q\Lh^p (\R^3_+)  } \leq    &     C \| u_0 \|_{X^s(\R^3_+)   } ^2  (1+t)^{ - (1-\f{1}{p})-\f{1}{2}(1-\f{1}{q})-\f{1+|\alphah|}{2}   }     ,     \\
   \| \Grad^{\alpha}\mathcal{D}_3^{\rm v}[u](t)    \|_{ \Lv^q\Lh^p (\R^3_+)  } \leq    &    C \| u_0 \|_{X^s(\R^3_+)   } ^2  (1+t) ^{  - (1-\f{1}{p}) -\f{1+|\alphah|}{2}  } ,    
\end{align*}
and  
\begin{align*}
 \| \Grad^{\alpha} \mathcal{D}_1^{\rm h}[u](t)    \|_{ \Lv^q\Lh^p (\R^3_+)  }  \leq    &     C \| u_0 \|_{X^s(\R^3_+)   } ^2  (1+t) ^{  - (1-\f{1}{p}) -\f{|\alphah|}{2}    }       ,   \\
   \| \Grad^{\alpha} \mathcal{D}_2^{\rm h}[u](t)    \|_{ \Lv^q\Lh^p (\R^3_+)  }  \leq    &     C \| u_0 \|_{X^s(\R^3_+)   } ^2  (1+t)^{ - (1-\f{1}{p}) -\f{1+|\alphah|}{2}   }  \log(2+t)  ,    \\
   \| \Grad^{\alpha} \mathcal{D}_3^{\rm h}[u](t)    \|_{ \Lv^q\Lh^p (\R^3_+)  } \leq    &    C \| u_0 \|_{X^s(\R^3_+)   } ^2  (1+t) ^{- (1-\f{1}{p})-\f{1}{2}(1-\f{1}{q})-\f{1+|\alphah|}{2}  }  \log(2+t)   ,   \\
    \| \Grad^{\alpha} \mathcal{D}_4^{\rm h}[u](t)    \|_{ \Lv^q\Lh^p (\R^3_+)  } \leq    &     C   \| u_0 \|_{X^s(\R^3_+)   } ^2  (1+t)^  {- (1-\f{1}{p})-\f{1}{2}(1-\f{1}{q})-\f{1+|\alphah|}{2}  }      ,  \\
     \| \Grad^{\alpha} \mathcal{D}_5^{\rm h}[u](t)    \|_{ \Lv^q\Lh^p (\R^3_+)  } \leq    &   C \| u_0 \|_{X^s(\R^3_+)   } ^2  (1+t)^{ - (1-\f{1}{p}) -\f{1+|\alphah|}{2}   }    .    
\end{align*}
\end{lemm}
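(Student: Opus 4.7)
The plan is to deduce the $1 \leq p, q < 2$ bounds from the $2 \leq p, q \leq \infty$ bounds of Lemma \ref{decay-duhamel-2} via duality. For $(p, q) \in [1,2)^2$ and each Duhamel term $\mathcal{D}^\bullet_j[u]$, I would write
\[
\|\mathcal{D}^\bullet_j[u](t)\|_{\Lv^q\Lh^p(\R^3_+)}
=
\sup\bigl\{\, |\langle \mathcal{D}^\bullet_j[u](t),\phi\rangle| :\ \|\phi\|_{L^{q'}_{x_3}L^{p'}_{x_h}(\R^3_+)}\leq 1\,\bigr\},
\]
with Hölder conjugates $p', q' \in (2, \infty]$, and move every linear operator from $\mathcal{D}^\bullet_j[u]$ onto $\phi$ via its $L^2(\R^3_+)$-adjoint. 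The required identities are $(e^{t\Deltah})^*=e^{t\Deltah}$, $\p_{x_k}^* = -\p_{x_k}$, $|\nablah|^* = |\nablah|$, $S^* = -S$, and (from the anti/symmetry of the kernels $\operatorname{sgn}(x_3-y_3)\,e^{-|\nablah||x_3-y_3|}$ and $e^{-|\nablah|(x_3+y_3)}$) $V^* = -V$, $W^* = W$, $T^* = T$, giving $V^{(\pm)*} = -V^{(\mp)}$ and $W^{(\pm)*} = W^{(\pm)}$.

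Each pairing thus reduces to $\int_0^t \langle (u_a u_b)(\tau), \mathcal{K}_{t-\tau}\phi\rangle d\tau$ with $\mathcal{K}_{t-\tau}$ a composition of the horizontal heat semigroup, differential operators, and the operators $S, V^{(\pm)}, W^{(\pm)}, T$ acting on $\phi$. I would then apply Hölder in both $x_3$ and $x_h$ to split $u_a u_b$ into a product of factors in $\Lv^{q_i}\Lh^{p_i}$ with $p_i, q_i \in [2, \infty]$; this is possible since $1/p, 1/q > 1/2$ admit splittings $1/p = 1/p_1 + 1/p_2$ and $1/q = 1/q_1 + 1/q_2$ with $p_i, q_i \geq 2$. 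Each factor is then controlled by Lemma \ref{decay-duhamel-1}.

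For the $\phi$-side I would combine the $L^{p'_\ast}_{x_h}\!\to\!L^{p'}_{x_h}$ smoothing of $e^{s\Deltah}$ (Lemma \ref{lemm-heat}), the boundedness of $S, V^{(\pm)}, W^{(\pm)}, T$ on the Chemin--Lerner spaces $\mathcal{L}^{p',q'}_\sigma(\R^3_+)$ (Lemmas \ref{lemm-U} and \ref{bdd-VWT}), and the embedding $L^{q'}_{x_3}L^{p'}_{x_h}(\R^3_+) \hookrightarrow \mathcal{L}^{p',q'}_\infty(\R^3_+)$ that follows from the uniform bound $\|\Deltahh_j f\|_{L^{p'}(\R^2)} \leq C\|f\|_{L^{p'}(\R^2)}$, valid for every $p'\in[1,\infty]$ by Young's inequality on the Littlewood--Paley kernel. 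The remaining time integral has the same shape $\int_0^t (t-\tau)^{-a}(1+\tau)^{-b}d\tau$ as in Lemma \ref{decay-duhamel-2} and is handled by the standard splitting $[0,t/2]\cup[t/2,t]$; the exponents are calibrated so that the same decay rates emerge, with borderline integrability at $\tau=t/2$ producing the $\log(2+t)$ factors in $\mathcal{D}_1^{\rm v}, \mathcal{D}_2^{\rm h}, \mathcal{D}_3^{\rm h}$.

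The main obstacle will be the endpoint $p'=\infty$, i.e.\ $p=1$, at which neither the Riesz transform $S$ nor the boundary operators $V^{(\pm)}, W^{(\pm)}, T$ are classically bounded on $L^\infty(\R^2)$. This is circumvented exactly as in the treatment of $\mathcal{D}_3^{\rm h}$ in Section \ref{decay-geq-2}: one routes the estimate through $\mathcal{L}^{p',q'}_\sigma$, on which these operators are bounded for the full range $1 \leq p', q', \sigma \leq \infty$, and uses the embedding recalled above to pass back to $L^{q'}_{x_3}L^{p'}_{x_h}$. Once this reduction is in place the remaining computations mirror those of Lemma \ref{decay-duhamel-2} verbatim for all seven Duhamel terms.
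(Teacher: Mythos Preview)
Your duality framework is a reasonable-looking alternative, but it breaks down precisely at the vertical derivatives. When you write ``move every linear operator from $\mathcal{D}^\bullet_j[u]$ onto $\phi$'' and reduce every pairing to $\langle (u_au_b)(\tau),\mathcal{K}_{t-\tau}\phi\rangle$, you are implicitly transferring $\partial_{x_3}$ to the test function. But $\phi$ is merely in $L^{q'}_{x_3}L^{p'}_{\xh}(\R^3_+)$ with no $x_3$-regularity, and the horizontal heat semigroup $e^{(t-\tau)\Deltah}$ provides \emph{no} smoothing in the vertical direction, so $\partial_{x_3}e^{(t-\tau)\Deltah}\phi$ is uncontrolled. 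This gap hits $\mathcal{D}_1^{\rm h}[u]=-\int_0^t e^{(t-\tau)\Deltah}\partial_{x_3}(\uh u_3)(\tau)\,d\tau$ already at $|\alpha|=0$, and it hits every term in the $\alpha_3=1$ case.

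The paper does \emph{not} dualize here; it works directly. The key observation is that for $1\le p,q<2$ one has $\tfrac{2p}{2-p},\tfrac{2q}{2-q}\in[2,\infty]$, so a H\"older splitting such as
\[
\|\partial_3(u_3\uh)(\tau)\|_{\Lv^q\Lh^1}\le \|\partial_3 u_3(\tau)\|_{\Lv^{2q/(2-q)}\Lh^2}\|\uh(\tau)\|_{\Lv^2\Lh^2}+\|u_3(\tau)\|_{\Lv^{2q/(2-q)}\Lh^2}\|\partial_3\uh(\tau)\|_{\Lv^2\Lh^2}
\]
places every factor in a norm with both indices $\ge 2$, where the decay rates of Theorem~\ref{main-theorem} (case $2\le p,q\le\infty$) are already available. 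The vertical derivative stays on the nonlinearity, is expanded via Leibniz, and is controlled by the known decay of $\partial_3 u$; for the $\alpha_3=1$ case one even needs $\|\partial_3^2(u_3\uh)\|$, handled through Gagliardo--Nirenberg interpolation against the $H^s$ bound. Your adjoint identities and the $\mathcal{L}^{p',q'}_\sigma$ workaround for $S,V^{(\pm)},W^{(\pm)},T$ at $p'=\infty$ are correct, and for the purely horizontal Duhamel pieces your scheme would go through --- but it is then equivalent to the direct argument and adds a layer of bookkeeping without resolving the $\partial_{x_3}$ obstruction.
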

\begin{proof} 
The proof of Lemma \ref{decay-duhamel-leq-2} follows the similar line as that of Lemma \ref{decay-duhamel-2}. However, due to the range of $p,q$ under consideration, decay estimates for solutions and their derivatives have to be modified in a suitable manner.

We begin with $\mathcal{D}_1^{\rm h}[u](t)$. Using \eqref{decay-h-1} and \eqref{decay-3-1}, one sees for all $1\leq p,q <2 $
\begin{align}
 \|u_3 \uh (\tau)  \|_{\Lv^q \Lh^1 (\R^3_+)  }   & \leq \|u_3(\tau)  \|_{\Lv^{ \f{2q}{2-q}  }\Lh^2 (\R^3_+) }
 \| \uh(\tau) \|_{\Lv^2 \Lh^2 (\R^3_+) }  
 \nonumber      \\    
 &  \leq 
 C \| u_0 \|_{X^s(\R^3_+)   } ^2  
 (1+\tau)^{-(\f{7}{4}-\f{1}{2q} ) },  \nonumber      \\    
  \|\p_3(u_3 \uh) (\tau)  \|_{\Lv^q \Lh^1 (\R^3_+)  }   & \leq   \|\p_3 u_3(\tau)  \|_{\Lv^{ \f{2q}{2-q}  }  \Lh^2 (\R^3_+) }
  \| \uh(\tau) \|_{\Lv^2 \Lh^2 (\R^3_+) }       \nonumber    \\ 
  & \quad \quad + 
  \| u_3(\tau)  \|_{\Lv^{ \f{2q}{2-q}  }\Lh^2 (\R^3_+) } 
  \| \p_3 \uh(\tau) \|_{\Lv^2 \Lh^2 (\R^3_+) }   \nonumber   \\
    &  \leq C \| u_0 \|_{X^s(\R^3_+)   } ^2 
    (1+\tau)^{-(\f{7}{4}-\f{1}{2q} ) },   \nonumber  \\   
 \| (u_3 \uh) (\tau)  \|_{\Lv^q \Lh^p (\R^3_+)  }   & \leq 
      \| u_3(\tau)  \|_{  \Lv^{ \f{2q}{2-q}   } \Lh^{ \f{2p}{2-p}  }(\R^3_+)         }       
      \| \uh(\tau) \|_{\Lv^2 \Lh^2 (\R^3_+)  }   \nonumber     \\
    &   \leq    C \| u_0 \|_{X^s(\R^3_+)   } ^2     (1+\tau)^{ -(\f{11}{4}-\f{1}{2q} -\f{1}{p}   ) } ,    \nonumber     \\ 
   \|\nablah (u_3 \uh) (\tau)  \|_{\Lv^q \Lh^p (\R^3_+)  }   & \leq 
      \| \nablah u_3(\tau)  \|_{  \Lv^{ \f{2q}{2-q}   } \Lh^{ \f{2p}{2-p}  }(\R^3_+)         }  
      \| \uh(\tau) \|_{\Lv^2 \Lh^2 (\R^3_+)  }     \nonumber\\
      & \quad \quad + \|  u_3(\tau)  \|_{  \Lv^{ \f{2q}{2-q}   } \Lh^{ \f{2p}{2-p}  }(\R^3_+)         }   \| \nablah \uh(\tau) \|_{\Lv^2 \Lh^2 (\R^3_+)  }  \nonumber\\
      &   \leq 
      C \| u_0 \|_{X^s(\R^3_+)   } ^2
      (1+\tau)^{ -(\f{13}{4}-\f{1}{2q} -\f{1}{p}   ) }  ,     \nonumber  \\   
    \|\p_3 (u_3 \uh) (\tau)  \|_{\Lv^q \Lh^p (\R^3_+)  }   & 
      \leq 
      \| \p_3 u_3(\tau)  \|_{  \Lv^{ \f{2q}{2-q}   } \Lh^{ \f{2p}{2-p}  }(\R^3_+)         } 
      \| \uh(\tau) \|_{\Lv^2 \Lh^2 (\R^3_+)  }    
      \label{lem5.1-1}   \\
      & \quad \quad +
      \|  u_3(\tau)  \|_{  \Lv^{ \f{2q}{2-q}   } \Lh^{ \f{2p}{2-p}  }(\R^3_+)         } 
      \| \p_3 \uh(\tau) \|_{\Lv^2 \Lh^2 (\R^3_+)  }  \nonumber\\
      &   \leq C \| u_0 \|_{X^s(\R^3_+)   } ^2  
      (1+\tau)^{ -(\f{11}{4}-\f{1}{2q} -\f{1}{p}   ) }  .   \nonumber 
\end{align}
Let us further divide it into $\alpha_3=0$ and $\alpha_3=1$ respectively. If $\alpha_3=0$, we use \eqref{lem5.1-1} to control 
\begin{align*}
    &  \left\| \nablah^{\alphah} \int_0^t   e^{(t-\tau) \Deltah }  \p_3 (u_3 \uh)(\tau)     d \tau \right\|_{\Lv^q \Lh^p (\R^3_+)  }            \\
    &   \quad  \leq \int_0^{\f{t}{2}  } \| \nablah^{\alphah} \Gh(t-\tau) \|_{ L^p(\R^2) } \|\p_3(u_3 \uh)(\tau)   \|_{\Lv^q \Lh^1 (\R^3_+)  }   d\tau   \\
    &   \quad \quad +  \int_{\f{t}{2}}^{t} \| \nablah^{\alphah} \Gh(t-\tau) \|_{ L^1(\R^2) }   \|\p_3 (u_3 \uh) (\tau)  \|_{\Lv^q \Lh^p (\R^3_+)  }  d\tau      \\
    & \quad \leq  C \| u_0 \|_{X^s(\R^3_+)   } ^2 \Big( \int_0^{\f{t}{2}  } (t-\tau)^{ -(1-\f{1}{p}) -\f{|\alphah|}{2}  } (1+\tau)^{-(\f{7}{4}-\f{1}{2q} ) }   d\tau   \\
    &   \quad \quad  \quad \quad \quad \quad \quad \quad\quad \quad \quad \quad +   \int_{\f{t}{2}}^{t} (t-\tau)^{-\f{ |\alphah| }{2} } (1+\tau)^{ -(\f{11}{4}-\f{1}{2q} -\f{1}{p}   ) }  d\tau   \Big)    \\
    & \quad \leq C \| u_0 \|_{X^s(\R^3_+)   } ^2 t^{ -(1-\f{1}{p}) -\f{|\alphah|}{2}  }   
\end{align*}
for $t \geq 1$ and 
\begin{align*}
    &    \left\| \nablah^{\alphah} \int_0^t   e^{(t-\tau) \Deltah }  \p_3 (u_3 \uh)(\tau)     d \tau \right\|_{\Lv^q \Lh^p (\R^3_+)  }            \\
    &   \quad  \leq \int_0^t  \| \nablah^{\alphah} \Gh(t-\tau) \|_{ L^1(\R^2) }   \|\p_3 (u_3 \uh) (\tau)  \|_{\Lv^q \Lh^p (\R^3_+)  }  d\tau       \\
    &   \quad \leq  C \| u_0 \|_{X^s(\R^3_+)   } ^2 \int_0^t  (t-\tau)^{-\f{ |\alphah| }{2} } (1+\tau)^{ -(\f{11}{4}-\f{1}{2q} -\f{1}{p}   ) }    d\tau       \\
    & \quad \leq  C  \| u_0 \|_{X^s(\R^3_+)   } ^2
\end{align*}
for $0<t\leq 1$. Therefore,
\begin{equation}\label{lem5.1-2}
   \left\| \nablah^{\alphah} \int_0^t   e^{(t-\tau) \Deltah }  \p_3 (u_3 \uh)(\tau)     d \tau \right\|_{\Lv^q \Lh^p (\R^3_+)  }    
    \leq  C  \| u_0 \|_{X^s(\R^3_+)   } ^2
    (1+t)^{ -(1-\f{1}{p}) -\f{|\alphah|}{2}  }  . 
\end{equation}

To consider the case of $\alpha_3=1$, we infer from the Gagliardo-Nirenberg interpolation inequality, \eqref{decay-h-1} and \eqref{decay-3-1} that for any $1\leq p,q <2$ 
\begin{align*}
  \|\p_3^2 (u_3 \uh) (\tau)  \|_{\Lv^q \Lh^1 (\R^3_+)  }  & 
  \leq  \| \p_3^2 u_3(\tau)  \|_{\Lv^{ \f{2q}{2-q}  } \Lh^2 (\R^3_+)  } 
  \| \uh(\tau) \|_{\Lv^2 \Lh^2 (\R^3_+)  }      \\
    &   \quad  
    +
    2 \| \p_3 u_3(\tau)  \|_{\Lv^{\f{2q}{2-q}} \Lh^2 (\R^3_+)  }  
    \| \p_3 \uh(\tau) \|_{\Lv^{2} \Lh^2 (\R^3_+)  }      \\
    &   \quad    
    +
    \| u_3(\tau) \|_{\Lv^{\f{2q}{2-q}} \Lh^2 (\R^3_+)  } 
    \| \p_3^2 \uh(\tau)  \|_{\Lv^{2} \Lh^2 (\R^3_+)  }   \\
    & \leq C 
    \Big(
    \| \p_3 u_3(\tau) \|_{L^2(\R^3_+)}^{  1-\f{2q-1}{4q}  } 
    \| \p_3^5 u_3(\tau) \|_{L^2(\R^3_+)}^{  \f{2q-1}{4q}  } 
    \| \uh(\tau) \|_{\Lv^2 \Lh^2 (\R^3_+)  }   \\
    &  \quad +
    \| \p_3 u_3(\tau)  \|_{\Lv^{\f{2q}{2-q}} \Lh^2 (\R^3_+)  } 
    \| \p_3 \uh(\tau) \|_{\Lv^{2} \Lh^2 (\R^3_+)  }  \\
    &   \quad + 
    \| u_3(\tau) \|_{\Lv^{\f{2q}{2-q}} \Lh^2 (\R^3_+)  } 
    \| \p_3 \uh(\tau) \|_{L^2(\R^3_+)}^{\f{3}{4}  }
    \| \p_3^5 \uh(\tau) \|_{L^2(\R^3_+)}^{\f{1}{4}  }
    \Big)     \\
    & \leq C  \| u_0 \|_{X^s(\R^3_+)   } ^2
    (1+\tau)^{- \min\left\{  \f{4q+1}{4q},\f{13q-4}{8q}     \right\}    };  
\end{align*} 
\begin{align*}
  \|\p_3^2 (u_3 \uh) (\tau)  \|_{\Lv^q \Lh^2 (\R^3_+)  }  &  
  \leq 
  \| \p_3^2 u_3(\tau)  \|_{\Lv^{\f{2q}{2-q}} \Lh^2 (\R^3_+)  }
  \| \uh(\tau) \|_{\Lv^{2} \Lh^{\infty} (\R^3_+)  }     \\
    &   \quad 
    +2 \| \p_3 u_3(\tau)  \|_{ \Lv^{\f{2q}{2-q}} \Lh^{\infty} (\R^3_+)  } 
    \| \p_3 \uh(\tau) \|_{\Lv^{2} \Lh^2 (\R^3_+)  }      \\
    &   \quad     
    + \| u_3(\tau) \|_{\Lv^{2} \Lh^{\infty} (\R^3_+)  }  
    \| \p_3^2 \uh(\tau)  \|_{\Lv^{\f{2q}{2-q}} \Lh^2 (\R^3_+)  } \\ 
     & \leq C
     \Big( 
     \| \p_3 u_3(\tau) \|_{L^2(\R^3_+)}^{  1-\f{2q-1}{4q}  } 
    \| \p_3^5 u_3(\tau) \|_{L^2(\R^3_+)}^{  \f{2q-1}{4q}  } 
     \| \uh(\tau) \|_{\Lv^{2} \Lh^{\infty} (\R^3_+)  } \\
    &  \quad + 
  \| \p_3 u_3(\tau)  \|_{ \Lv^{\f{2q}{2-q}} \Lh^{\infty} (\R^3_+)  } 
    \| \p_3 \uh(\tau) \|_{\Lv^{2} \Lh^2 (\R^3_+)  }  \\
    &   \quad + 
    \| u_3(\tau) \|_{\Lv^{2} \Lh^{\infty} (\R^3_+)  } 
     \| \p_3 \uh(\tau) \|_{L^2(\R^3_+)}^{  1-\f{2q-1}{4q}  } 
    \| \p_3^5 \uh(\tau) \|_{L^2(\R^3_+)}^{  \f{2q-1}{4q}  } 
    \Big) \\
    & \leq
    C  \| u_0 \|_{X^s(\R^3_+)   } ^2
    (1+\tau)^{   -\f{12q+1}{8q}  }. 
\end{align*} 
It is easy to check that for any $1\leq p,q <2$
\begin{align*}
&  \min\left\{  \f{4q+1}{4q},\f{13q-4}{8q}     \right\} >1 , 
\quad \quad 
 -\f{12q+1}{8q} +\f{1}{2}>1.
\end{align*} 
Then we can repeat step by step as in Lemma \ref{decay-duhamel-2}.

To proceed, we consider $\mathcal{D}_2^{\rm h}[u](t)$. Using \eqref{decay-h-1} and \eqref{decay-3-1} again, one sees for all $1\leq p,q <2 $ and $k,l\in \{1,2 \}$
\begin{align}
 \|u_k u_l (\tau)  \|_{\Lv^q \Lh^1 (\R^3_+)  }   & 
 \leq \|u_k(\tau)  \|_{\Lv^{  \f{2q}{2-q}  }\Lh^2 (\R^3_+) }
 \| u_l(\tau) \|_{\Lv^2 \Lh^2 (\R^3_+) }  \nonumber \\
 &
 \leq 
 C   \| u_0 \|_{X^s(\R^3_+)   } ^2 (1+\tau)^{-1},     \nonumber      \\
  \|\p_3(u_k u_l) (\tau)  \|_{\Lv^q \Lh^1 (\R^3_+)  }   & \leq C 
  \|\p_3 u_k(\tau)  \|_{\Lv^{\f{2q}{2-q}}\Lh^2 (\R^3_+) } 
  \| u_l(\tau) \|_{\Lv^q \Lh^2 (\R^3_+) }         \nonumber    \\
    &  \leq
    C  
    \| u_0 \|_{X^s(\R^3_+)   } ^2
    (1+\tau)^{-1},   \nonumber  \\
 \|\Grad^{\alpha} (u_k u_l) (\tau)  \|_{\Lv^q \Lh^p (\R^3_+)  }   & \leq 
    C  \| \Grad^{\alpha}u_k(\tau)  \|_{ \Lv^{\f{2q}{2-q}}\Lh^{\f{2p}{2-p}} (\R^3_+)}
    \| u_l(\tau) \|_{\Lv^2 \Lh^2 (\R^3_+)  }   \label{lem5.1-3}  \\
    & \leq
    C \| u_0 \|_{X^s(\R^3_+)   } ^2
    (1+\tau)^{ -(1-\f{2-p}{2p})-\f{|\alphah|}{2}   } (1+\tau)^{-\f{1}{2}}   \nonumber  \\
    & = 
    C   \| u_0 \|_{X^s(\R^3_+)   } ^2 
    (1+\tau)^{ -(2- \f{1}{p} )-\f{|\alphah|}{2}   }.   \nonumber 
\end{align}
It follows readily from \eqref{lem5.1-3} that 
\begin{align*}
    &  \left\| \Grad^{\alpha} \int_0^t   e^{(t-\tau) \Deltah }  \div_{\rm h} (\uh \otimes \uh)(\tau)     d \tau \right\|_{\Lv^q \Lh^p (\R^3_+)  }            \\
    &   \quad  \leq \int_0^{\f{t}{2}  } \| \nablah^{\alphah}\nablah \Gh(t-\tau) \|_{ L^p(\R^2) } \|\p_3^{\alpha_3}(u_k u_l)(\tau)   \|_{\Lv^q \Lh^1 (\R^3_+)  }   d\tau   \\
    &   \quad \quad +  \int_{\f{t}{2}}^{t} \| \nablah \Gh(t-\tau) \|_{ L^1(\R^2) }   \|\Grad^{\alpha} (u_k u_l) (\tau)  \|_{\Lv^q \Lh^p (\R^3_+)  }  d\tau      \\
    & \quad \leq  
    C \| u_0 \|_{X^s(\R^3_+)   } ^2 
    \Big( \int_0^{\f{t}{2}  } (t-\tau)^{ -(1-\f{1}{p}) -\f{1+|\alphah|}{2}  } (1+\tau)^{-1}d\tau \\
    & \quad \quad \quad \quad \quad \quad \quad \quad \quad \quad \quad \quad 
    +   \int_{\f{t}{2}}^{t} (t-\tau)^{-\f{1}{2}} (1+\tau)^{ -(2- \f{1}{p} )-\f{|\alphah|}{2}   }  d\tau   \Big) \\
    & \quad \leq 
    C \| u_0 \|_{X^s(\R^3_+)   } ^2 
    t^{ -(1-\f{1}{p}) -\f{1+|\alphah|}{2}  } \log(2+t)
\end{align*}
for $t \geq 1$ and 
\begin{align*}
    &    \left\| \Grad^{\alpha} \int_0^t   e^{(t-\tau) \Deltah }  \div_{\rm h} (\uh \otimes \uh)(\tau)     d \tau \right\|_{\Lv^q \Lh^p (\R^3_+)  }           \\
    &   \quad  \leq \int_0^t \| \nablah \Gh(t-\tau) \|_{ L^1(\R^2) }   \|\Grad^{\alpha} (u_k u_l) (\tau)  \|_{\Lv^q \Lh^p (\R^3_+)  }  d\tau       \\
    &   \quad \leq  C\| u_0 \|_{X^s(\R^3_+)   } ^2 \int_0^t   (t-\tau)^{-\f{1}{2}} (1+\tau)^{ -(2- \f{1}{p} )-\f{|\alphah|}{2}   }  d\tau       \\
    & \quad \leq   C  \| u_0 \|_{X^s(\R^3_+)   } ^2
\end{align*}
for $0<t\leq 1$. Thus
\begin{align*}
  &  \left\| \Grad^{\alpha} \int_0^t   e^{(t-\tau) \Deltah } \div_{\rm h} (\uh \otimes \uh)(\tau)     d \tau \right\|_{\Lv^q \Lh^p (\R^3_+)  }    \\
  & \quad \quad   \quad \quad   
    \leq 
    C
    \| u_0 \|_{X^s(\R^3_+)   } ^2
    (1+t)^{ -(1-\f{1}{p}) -\f{1+|\alphah|}{2}  } \log(2+t). 
\end{align*}

The estimate of $\mathcal{D}_3^{\rm h}[u](t)$ follows exactly as in Lemma \ref{decay-duhamel-2} with slight modifications on the estimate towards $ \| 
    \nablah  (\uh \uh)(\tau) 
    \|_{\Lv^{1} \Lh^{p} (\R^3_+) }$
. The detailed proof for the remaining integrals is again omitted for brevity.   \end{proof}

\section{Proof of Theorem \ref{main-theorem} }\label{main-thm-2}

\subsection{Proof of Theorem \ref{main-theorem} in case of \texorpdfstring{$1 \leq  p,q < 2$}{}} \label{sec-6.1}

It follows from the linear estimates Corollaries \ref{cor2.6}-\ref{cor2.7} that there exists generic $C_1>0$ such that for any $\alpha=(\alphah,\alpha_3) \in (\mathbb{N}\cup \{ 0 \} )^2 \times (\mathbb{N}\cup \{ 0 \} ) $ with $|\alpha| \leq 1 $
\begin{align}
    &    \| \nabla^{\alpha} (e^{-tA}u_{0})_{\rm h}   \| _{\Lv^q\Lh^p (\R^3_+) }  \leq      C_1  \left(  t^{ -(1-\f{1}{p})  -\f{|\alphah|}{2}         } +   t^{ - (1-\f{1}{p})-\f{1}{2}(1-\f{1}{q})   -\f{|\alphah|}{2}         }   \right) \| u_0 \|_{X^s(\R^3_+)   }     \nonumber
\end{align}
for all $t>0, 1 \leq  p,q <2$ satisfying $(1-\f{1}{p}) + \f{|\alphah|}{2}    >0$;
\begin{align}
    &  \| \nablah^{\alphah}  (e^{-tA}u_{0})_{3} \|_{\Lv^q\Lh^p (\R^3_+)}  \leq       C_1  t^{ - (1-\f{1}{p})-\f{1}{2}(1-\f{1}{q})   -\f{|\alphah|}{2}         }  \| u_0 \|_{X^s(\R^3_+)   } \nonumber
\end{align}
for all $t>0, 1 \leq  p,q <2$ satisfying $ (1-\f{1}{p})+\f{1}{2}(1-\f{1}{q})   +\f{|\alphah|}{2}  >0$. Based on Corollary \ref{lemm:sol-form-u}, the decay estimates of Duhamel terms obtained in Lemma \ref{decay-duhamel-leq-2} and the linear estimates above, we infer for any $\alpha=(\alphah,\alpha_3) \in (\mathbb{N}\cup \{ 0 \} )^2 \times (\mathbb{N}\cup \{ 0 \} ) $ with $|\alpha| \leq 1 $
\begin{align*}
   \| \nabla^{\alpha} \uh(t)   \| _{\Lv^q\Lh^p (\R^3_+) }  &     \leq      C_1  \left(  t^{ -(1-\f{1}{p})  -\f{|\alphah|}{2}         } +   t^{ - (1-\f{1}{p})-\f{1}{2}(1-\f{1}{q})   -\f{|\alphah|}{2}         }   \right) \| u_0 \|_{X^s(\R^3_+)   }          \\
    &  \quad + C_2 (1+t)^{-(1-\f{1}{p})  -\f{|\alphah|}{2}   }     \| u_0 \|_{X^s(\R^3_+)   } ^2      \\
    &  \leq  C_3         \left(  t^{ -(1-\f{1}{p})  -\f{|\alphah|}{2}         } +   t^{ - (1-\f{1}{p})-\f{1}{2}(1-\f{1}{q})   -\f{|\alphah|}{2}         }   \right) \| u_0 \|_{X^s(\R^3_+)   }   ,                  \\
  \| \nablah^{\alphah}  u_3(t) \|_{\Lv^q\Lh^p (\R^3_+)}  &    \leq       C_1  t^{ - (1-\f{1}{p})-\f{1}{2}(1-\f{1}{q})   -\f{|\alphah|}{2}         }  \| u_0 \|_{X^s(\R^3_+)   }   \\
  & \quad + C_2 (1+t)^{  - (1-\f{1}{p})-\f{1}{2}(1-\f{1}{q})   -\f{|\alphah|}{2}      }   \| u_0 \|_{X^s(\R^3_+)   } ^2  \\
  &   \leq  C_3 t^{ - (1-\f{1}{p})-\f{1}{2}(1-\f{1}{q})   -\f{|\alphah|}{2}         }  \| u_0 \|_{X^s(\R^3_+)   } 
\end{align*}
for all $t>0$, $1\leq  p,q <2$ satisfying $(1-\f{1}{p}) + \f{|\alphah|}{2}    >0$ and $ (1-\f{1}{p})+\f{1}{2}(1-\f{1}{q})   +\f{|\alphah|}{2}  >0$ respectively. Here, we invoked the smallness of initial data. We have verified \eqref{decay-h-2} and \eqref{decay-3-2}, thus finishing the proof of Theorem \ref{main-theorem} in case of $1\leq p,q <2$.

\subsection{Proof of Theorem \ref{main-theorem}: the remaining cases} 

Observe that the similar argument of Section \ref{decay-less-2} still works well for Case $II$ and Case $III$ in \eqref{pq-range-h} and \eqref{pq-range-3} respectively. More precisely, it holds that
\begin{lemm}\label{decay-duhamel-6.2-lem} 
There exists a pure constant $C>0$ such that for any $p,q $ satisfying
\begin{align*}
 & 1\leq p < 2, 2\leq q \leq \infty \quad
 \text{or} \quad  1\leq q < 2, 2\leq p \leq \infty
\end{align*}
and any $\alpha=(\alphah,\alpha_3) \in (\mathbb{N}\cup \{ 0 \} )^2 \times (\mathbb{N}\cup \{ 0 \} ) $ with $|\alpha| \leq 1 $ 
\begin{align*}
 \| \Grad^{\alpha} \mathcal{D}_1^{\rm v}[u](t)    \|_{ \Lv^q\Lh^p (\R^3_+)  } \leq    &     C \| u_0 \|_{X^s(\R^3_+)   } ^2  (1+t)^ {- (1-\f{1}{p})-\f{1}{2}(1-\f{1}{q})-\f{1+|\alphah|}{2}  }  \log(2+t) ,    \\  
   \| \Grad^{\alpha} \mathcal{D}_2^{\rm v}[u](t)    \|_{ \Lv^q\Lh^p (\R^3_+)  } \leq    &     C \| u_0 \|_{X^s(\R^3_+)   } ^2  (1+t)^{ - (1-\f{1}{p})-\f{1}{2}(1-\f{1}{q})-\f{1+|\alphah|}{2}   }     ,     \\
   \| \Grad^{\alpha}\mathcal{D}_3^{\rm v}[u](t)    \|_{ \Lv^q\Lh^p (\R^3_+)  } \leq    &    C \| u_0 \|_{X^s(\R^3_+)   } ^2  (1+t) ^{  - (1-\f{1}{p}) -\f{1+|\alphah|}{2}  } ,    
\end{align*}
and  
\begin{align*}
 \| \Grad^{\alpha} \mathcal{D}_1^{\rm h}[u](t)    \|_{ \Lv^q\Lh^p (\R^3_+)  }  \leq    &     C \| u_0 \|_{X^s(\R^3_+)   } ^2  (1+t) ^{  - (1-\f{1}{p}) -\f{|\alphah|}{2}    }       ,   \\
   \| \Grad^{\alpha} \mathcal{D}_2^{\rm h}[u](t)    \|_{ \Lv^q\Lh^p (\R^3_+)  }  \leq    &     C \| u_0 \|_{X^s(\R^3_+)   } ^2  (1+t)^{ - (1-\f{1}{p}) -\f{1+|\alphah|}{2}   }  \log(2+t)  ,    \\
   \| \Grad^{\alpha} \mathcal{D}_3^{\rm h}[u](t)    \|_{ \Lv^q\Lh^p (\R^3_+)  } \leq    &    C \| u_0 \|_{X^s(\R^3_+)   } ^2  (1+t) ^{- (1-\f{1}{p})-\f{1}{2}(1-\f{1}{q})-\f{1+|\alphah|}{2}  }  \log(2+t)   ,   \\
    \| \Grad^{\alpha} \mathcal{D}_4^{\rm h}[u](t)    \|_{ \Lv^q\Lh^p (\R^3_+)  } \leq    &     C   \| u_0 \|_{X^s(\R^3_+)   } ^2  (1+t)^  {- (1-\f{1}{p})-\f{1}{2}(1-\f{1}{q})-\f{1+|\alphah|}{2}  }      ,  \\
     \| \Grad^{\alpha} \mathcal{D}_5^{\rm h}[u](t)    \|_{ \Lv^q\Lh^p (\R^3_+)  } \leq    &   C \| u_0 \|_{X^s(\R^3_+)   } ^2  (1+t)^{ - (1-\f{1}{p}) -\f{1+|\alphah|}{2}   }    .    
\end{align*}
\end{lemm}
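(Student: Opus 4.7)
The plan is to mirror the term-by-term calculation of Lemma \ref{decay-duhamel-leq-2} verbatim, the only essential modification being the choice of Hölder exponents used to bound the quadratic nonlinearity $u \otimes u$ (and its derivatives) in the mixed-range norm $\Lv^q\Lh^p(\R^3_+)$. At this stage the decay estimates of $u$ are available throughout the range $2 \leq p,q \leq \infty$ (Section \ref{main-thm-1}); the key observation is that in \emph{both} Case II ($1 \leq q < 2,\ 2 \leq p \leq \infty$) and Case III ($1 \leq p < 2,\ 2 \leq q \leq \infty$) one can split the product into factors whose anisotropic Lebesgue exponents both lie in $[2, \infty]$, so that the already-established $2\leq p,q\leq\infty$ rates apply to each factor.

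Concretely, for Case III I would split the horizontal exponent,
\begin{align*}
\| fg \|_{\Lv^q \Lh^p(\R^3_+)} \leq \| f \|_{\Lv^{q_1} \Lh^{\frac{2p}{2-p}}(\R^3_+)} \| g \|_{\Lv^{q_2} \Lh^2(\R^3_+)},
\end{align*}
using $\tfrac{2p}{2-p} \in [2, \infty)$ for $p \in [1, 2)$ and $q_1,q_2 \in [2, \infty]$ with $1/q_1 + 1/q_2 = 1/q$; for Case II I would instead split the vertical exponent analogously via
\begin{align*}
\| fg \|_{\Lv^q \Lh^p(\R^3_+)} \leq \| f \|_{\Lv^{\frac{2q}{2-q}} \Lh^{p_1}(\R^3_+)} \| g \|_{\Lv^2 \Lh^{p_2}(\R^3_+)},
\end{align*}
using $\tfrac{2q}{2-q} \in [2, \infty)$ and $p_1, p_2 \in [2, \infty]$. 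Every factor then sits in the $2\leq p,q\leq\infty$ regime, and Lemma \ref{decay-duhamel-1} delivers a product decay in $\tau$ strictly better than $(1+\tau)^{-1}$ in the critical estimates. Combining this with the horizontal $L^p$-$L^q$ heat-semigroup estimates and the standard split $\int_0^t = \int_0^{t/2} + \int_{t/2}^t$ — exactly as in Lemma \ref{decay-duhamel-leq-2} — reproduces the stated decay rates, with the logarithmic factors in $\mathcal{D}^{\rm v}_1,\mathcal{D}^{\rm h}_2, \mathcal{D}^{\rm h}_3$ again arising from critical index-matching at $\tau = t/2$.

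For the terms $\mathcal{D}^{\rm v}_1, \mathcal{D}^{\rm v}_2, \mathcal{D}^{\rm v}_3, \mathcal{D}^{\rm h}_3, \mathcal{D}^{\rm h}_5$ carrying the nonlocal operators $V^{(\pm)}, W^{(\pm)}, T$ together with the Riesz transform $S$, I would work instead in the Chemin--Lerner space $\mathcal{L}^{p,q}_1(\R^3_+)$, absorbing the operators by Lemmas \ref{lemm-U} and \ref{bdd-VWT}, transferring horizontal derivatives onto the semigroup via the $\dot{B}^{1-1/q}_{p,1}$-type bound built into the proof of Lemma \ref{lemm-enh-decay}, and returning to $\Lv^q\Lh^p$ through the embedding $\mathcal{L}^{p,q}_1 \hookrightarrow \Lv^q\Lh^p$, exactly as done for $\mathcal{D}^{\rm h}_3$ in Lemma \ref{decay-duhamel-2}. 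The $\alpha_3 = 1$ case of $\mathcal{D}^{\rm h}_1$ additionally invokes the same Gagliardo--Nirenberg interpolation as in Lemma \ref{decay-duhamel-leq-2}, with the high-order factors $\partial_3 u$ and $\partial_3^5 u$ uniformly bounded in $L^2(\R^3_+)$ by Lemma \ref{strong-sol}. The main obstacle is therefore purely combinatorial: for each Duhamel term and each admissible $(p,q,\alpha)$ in Cases II--III one must verify that some such Hölder splitting simultaneously (i) places every factor in the $2\leq p,q\leq\infty$ range and (ii) yields a $\tau$-integrable product whose exponent, after convolution with the horizontal heat kernel, reproduces the advertised rate — including the logarithmic enhancement where the exponents match critically. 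Once this bookkeeping is carried out the time-integral manipulations are identical to those in Lemma \ref{decay-duhamel-leq-2} and no new analytical ingredient is required.
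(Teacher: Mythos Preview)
Your proposal is correct and follows essentially the same route as the paper: the paper reduces by ``symmetry of $p,q$'' to Case III only and then carries out exactly the H\"older splitting you describe (placing one factor in $\Lv^{\infty}\Lh^{2p/(2-p)}$ and the other in $\Lv^q\Lh^2$), invokes Gagliardo--Nirenberg for the $\alpha_3=1$ part of $\mathcal{D}^{\rm h}_1$, and defers the nonlocal-operator terms to the Chemin--Lerner argument of Lemma \ref{decay-duhamel-2}. One small omission: $\mathcal{D}^{\rm h}_4$ also carries the nonlocal operator $S(V-T)$ and should be on your list of terms handled in $\mathcal{L}^{p,q}_1(\R^3_+)$, but the treatment is identical to $\mathcal{D}^{\rm h}_3$.
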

\begin{proof} 
Due to the symmetry of $p,q$ under consideration, it suffices to handle the case $1\leq p < 2, 2\leq q \leq \infty$. 
The proof follows the similar line as that of Lemma \ref{decay-duhamel-2} with suitable modifications. 

We begin with $\mathcal{D}_1^{\rm h}[u](t)$. Based on \eqref{decay-h-1} and \eqref{decay-3-1}, one sees for all $1\leq p < 2, 2\leq q \leq \infty $
\begin{align}
 \|u_3 \uh (\tau)  \|_{\Lv^q \Lh^1 (\R^3_+)  }   & \leq \|u_3(\tau)  \|_{\Lv^{ \infty  }\Lh^2 (\R^3_+) }
 \| \uh(\tau) \|_{\Lv^q \Lh^2 (\R^3_+) }  
 \nonumber      \\    
 & \leq 
 C \| u_0 \|_{X^s(\R^3_+)   } ^2  
 (1+\tau)^{    -\f{3}{2}   },  \nonumber      \\    
  \|\p_3(u_3 \uh) (\tau)  \|_{\Lv^q \Lh^1 (\R^3_+)  }   & \leq   \|\p_3 u_3(\tau)  \|_{\Lv^{ \infty  }  \Lh^2 (\R^3_+) }
  \| \uh(\tau) \|_{\Lv^q \Lh^2 (\R^3_+) }       \nonumber    \\ 
  & \quad \quad + 
  \| u_3(\tau)  \|_{\Lv^{ \infty  }\Lh^2 (\R^3_+) } 
  \| \p_3 \uh(\tau) \|_{\Lv^q \Lh^2 (\R^3_+) }   \nonumber   \\
    &  \leq C \| u_0 \|_{X^s(\R^3_+)   } ^2 
    (1+\tau)^{   -  \f{3}{2}    },   \nonumber  \\   
 \| (u_3 \uh) (\tau)  \|_{\Lv^q \Lh^p (\R^3_+)  }   & \leq 
      \| u_3(\tau)  \|_{  \Lv^{ \infty  } \Lh^{ \f{2p}{2-p}  }(\R^3_+)         }       
      \| \uh(\tau) \|_{\Lv^q \Lh^2 (\R^3_+)  }   \nonumber     \\ 
    &   \leq  
    C \| u_0 \|_{X^s(\R^3_+)   } ^2   
    (1+\tau)^
    { 
    -\f{5}{2} +\f{1}{p}   
    } , 
    \nonumber     \\ 
   \|\nablah (u_3 \uh) (\tau)  \|_{\Lv^q \Lh^p (\R^3_+)  }   & \leq 
      \| \nablah u_3(\tau)  \|_{  \Lv^{ \infty   } \Lh^{ \f{2p}{2-p}  }(\R^3_+)         }  
      \| \uh(\tau) \|_{\Lv^q \Lh^2 (\R^3_+)  }     \nonumber\\
      & \quad \quad + \|  u_3(\tau)  \|_{  \Lv^{ \infty  } \Lh^{ \f{2p}{2-p}  }(\R^3_+)         }   \| \nablah \uh(\tau) \|_{\Lv^q \Lh^2 (\R^3_+)  }  \nonumber\\
      &   \leq 
      C \| u_0 \|_{X^s(\R^3_+)   } ^2
      (1+\tau)
      ^{
      -3+ \f{1}{p}   
      }  ,     \nonumber  \\   
    \|\p_3 (u_3 \uh) (\tau)  \|_{\Lv^q \Lh^p (\R^3_+)  }   & 
      \leq 
      \| \p_3 u_3(\tau)  \|_{  \Lv^{ \infty   } \Lh^{ \f{2p}{2-p}  }(\R^3_+)         } 
      \| \uh(\tau) \|_{\Lv^q \Lh^2 (\R^3_+)  }    
      \label{lem-6.1-1}   \\
      & \quad \quad +
      \|  u_3(\tau)  \|_{  \Lv^{ \infty   } \Lh^{ \f{2p}{2-p}  }(\R^3_+)         } 
      \| \p_3 \uh(\tau) \|_{\Lv^q \Lh^2 (\R^3_+)  }  \nonumber\\
      &   \leq C \| u_0 \|_{X^s(\R^3_+)   } ^2  
      (1+\tau)
      ^{
      -\f{5}{2} +\f{1}{p}   
      }  .   \nonumber 
\end{align}
Let us further divide it into $\alpha_3=0$ and $\alpha_3=1$ respectively. If $\alpha_3=0$, we use \eqref{lem-6.1-1} to control 
\begin{align*}
    &  \left\| \nablah^{\alphah} \int_0^t   e^{(t-\tau) \Deltah }  \p_3 (u_3 \uh)(\tau)     d \tau \right\|_{\Lv^q \Lh^p (\R^3_+)  }            \\
    &   \quad  \leq \int_0^{\f{t}{2}  } \| \nablah^{\alphah} \Gh(t-\tau) \|_{ L^p(\R^2) } \|\p_3(u_3 \uh)(\tau)   \|_{\Lv^q \Lh^1 (\R^3_+)  }   d\tau   \\
    &   \quad \quad +  \int_{\f{t}{2}}^{t} \| \nablah^{\alphah} \Gh(t-\tau) \|_{ L^1(\R^2) }   \|\p_3 (u_3 \uh) (\tau)  \|_{\Lv^q \Lh^p (\R^3_+)  }  d\tau      \\
    & \quad \leq  C \| u_0 \|_{X^s(\R^3_+)   } ^2 \Big( \int_0^{\f{t}{2}  } (t-\tau)^{ -(1-\f{1}{p}) -\f{|\alphah|}{2}  }
    (1+\tau)^{-\f{3}{2} }   d\tau   \\
    &   \quad \quad  \quad \quad \quad \quad \quad \quad\quad \quad \quad \quad +   \int_{\f{t}{2}}^{t} (t-\tau)^{-\f{ |\alphah| }{2} } (1+\tau)^{ -\f{5}{2} +\f{1}{p}    }  d\tau   \Big)    \\
    & \quad \leq C \| u_0 \|_{X^s(\R^3_+)   } ^2 t^{ -(1-\f{1}{p}) -\f{|\alphah|}{2}  }   
\end{align*}
for $t \geq 1$ and 
\begin{align*}
    &    \left\| \nablah^{\alphah} \int_0^t   e^{(t-\tau) \Deltah }  \p_3 (u_3 \uh)(\tau)     d \tau \right\|_{\Lv^q \Lh^p (\R^3_+)  }            \\
    &   \quad  \leq \int_0^t  \| \nablah^{\alphah} \Gh(t-\tau) \|_{ L^1(\R^2) }   \|\p_3 (u_3 \uh) (\tau)  \|_{\Lv^q \Lh^p (\R^3_+)  }  d\tau       \\
    &   \quad \leq  C \| u_0 \|_{X^s(\R^3_+)   } ^2 \int_0^t  (t-\tau)^{-\f{ |\alphah| }{2} } (1+\tau)^{  -\f{5}{2} +\f{1}{p}   }    d\tau       \\
    & \quad \leq  C  \| u_0 \|_{X^s(\R^3_+)   } ^2
\end{align*}
for $0<t\leq 1$. Thus for any $1\leq p < 2, 2\leq q \leq \infty$ it holds 
\begin{align*}
 &  \left\| \nablah^{\alphah} \int_0^t   e^{(t-\tau) \Deltah }  \p_3 (u_3 \uh)(\tau)     d \tau \right\|_{\Lv^q \Lh^p (\R^3_+)  }    
    \leq  C  \| u_0 \|_{X^s(\R^3_+)   } ^2
    (1+t)^{ -(1-\f{1}{p}) -\f{|\alphah|}{2}  }  . 
\end{align*}

To treat the case of $\alpha_3=1$, we again deduce from the Gagliardo-Nirenberg interpolation inequality, \eqref{decay-h-1} and \eqref{decay-3-1} that for any $1\leq p < 2, 2\leq q \leq \infty$ 
\begin{align*}
  \|\p_3^2 (u_3 \uh) (\tau)  \|_{\Lv^q \Lh^1 (\R^3_+)  }  & 
  \leq  \| \p_3^2 u_3(\tau)  \|_{\Lv^{ \infty  } \Lh^2 (\R^3_+)  } 
  \| \uh(\tau) \|_{\Lv^q  \Lh^2 (\R^3_+)  }      \\
    &   \quad  
    +
    2 \| \p_3 u_3(\tau)  \|_{\Lv^{\infty } \Lh^2 (\R^3_+)  }  
    \| \p_3 \uh(\tau) \|_{\Lv^{q} \Lh^2 (\R^3_+)  }      \\
    &   \quad    
    +
    \| u_3(\tau) \|_{\Lv^{q} \Lh^2 (\R^3_+)  } 
    \| \p_3^2 \uh(\tau)  \|_{\Lv^{ \infty } \Lh^2 (\R^3_+)  }   \\ 
    & \leq C  
    \Big(
    \| \p_3 u_3(\tau) \|_{L^2(\R^3_+)}^{  \f{5}{8}  } 
    \| \p_3^5 u_3(\tau) \|_{L^2(\R^3_+)}^{  \f{3}{8}  } 
    \| \uh(\tau) \|_{\Lv^q \Lh^2 (\R^3_+)  }   \\
    &  \quad +
    \| \p_3 u_3(\tau)  \|_{\Lv^{\infty } \Lh^2 (\R^3_+)  } 
    \| \p_3 \uh(\tau) \|_{\Lv^{q} \Lh^2 (\R^3_+)  }  \\
    &   \quad + 
    \| u_3(\tau) \|_{\Lv^{ q  } \Lh^2 (\R^3_+)  } 
    \| \p_3 \uh(\tau) \|_{L^2(\R^3_+)}^{ \f{5}{8}  }
    \| \p_3^5 \uh(\tau) \|_{L^2(\R^3_+)}^{ \f{3}{8}    }
    \Big)     \\
    & \leq C  \| u_0 \|_{X^s(\R^3_+)   } ^2
    (1+\tau)
    ^{  -\f{17}{16}
    };  
\end{align*} 
\begin{align*}
  \|\p_3^2 (u_3 \uh) (\tau)  \|_{\Lv^q \Lh^2 (\R^3_+)  }  &  
  \leq 
  \| \p_3^2 u_3(\tau)  \|_{\Lv^{ \infty } \Lh^2 (\R^3_+)  }
  \| \uh(\tau) \|_{\Lv^{q} \Lh^{\infty} (\R^3_+)  }     \\
    &   \quad 
    +2 \| \p_3 u_3(\tau)  \|_{ \Lv^{\infty } \Lh^{\infty} (\R^3_+)  } 
    \| \p_3 \uh(\tau) \|_{\Lv^{q} \Lh^2 (\R^3_+)  }      \\
    &   \quad     
    + \| u_3(\tau) \|_{\Lv^{q} \Lh^{\infty} (\R^3_+)  }  
    \| \p_3^2 \uh(\tau)  \|_{\Lv^{\infty  } \Lh^2 (\R^3_+)  }   \\   
     & \leq C
     \Big( 
     \| \p_3 u_3(\tau) \|_{L^2(\R^3_+)}^{   \f{5}{8}   } 
    \| \p_3^5 u_3(\tau) \|_{L^2(\R^3_+)}^{  \f{3}{8}  } 
     \| \uh(\tau) \|_{\Lv^{q} \Lh^{\infty} (\R^3_+)  } \\
    &  \quad + 
  \| \p_3 u_3(\tau)  \|_{ \Lv^{\infty } \Lh^{\infty} (\R^3_+)  } 
    \| \p_3 \uh(\tau) \|_{\Lv^{q} \Lh^2 (\R^3_+)  }  \\
    &   \quad + 
    \| u_3(\tau) \|_{\Lv^{q} \Lh^{\infty} (\R^3_+)  } 
     \| \p_3 \uh(\tau) \|_{L^2(\R^3_+)}^{  \f{5}{8}   } 
    \| \p_3^5 \uh(\tau) \|_{L^2(\R^3_+)}^{  \f{3}{8}   } 
    \Big) \\
    & \leq
    C  \| u_0 \|_{X^s(\R^3_+)   } ^2
    (1+\tau)^{ -\f{25}{16}      }. 
\end{align*} 
Similar to Lemma \ref{decay-duhamel-2}, the two estimates above enable us to derive 
\begin{align*}
 &  \left\| \p_3 \int_0^t   e^{(t-\tau) \Deltah }  \p_3 (u_3 \uh)(\tau)     d \tau \right\|_{\Lv^q \Lh^p (\R^3_+)  }    
    \leq  C 
    \| u_0 \|_{X^s(\R^3_+)   } ^2
    (1+t)^{ -(1-\f{1}{p}) } 
\end{align*} 
for any $1\leq p < 2, 2\leq q \leq \infty$.

We next consider $\mathcal{D}_2^{\rm h}[u](t)$. Making use of \eqref{decay-h-1} and \eqref{decay-3-1} once again, one sees for all $1\leq p < 2, 2\leq q \leq \infty$ and $k,l\in \{1,2 \}$ 
\begin{align}
 \|u_k u_l (\tau)  \|_{\Lv^q \Lh^1 (\R^3_+)  }   & 
 \leq \|u_k(\tau)  \|_{\Lv^{  \infty  }\Lh^2 (\R^3_+) }
 \| u_l(\tau) \|_{\Lv^q \Lh^2 (\R^3_+) }  \nonumber \\
 &
 \leq 
 C   \| u_0 \|_{X^s(\R^3_+)   } ^2 (1+\tau)^{-1},     \nonumber      \\
  \|\p_3(u_k u_l) (\tau)  \|_{\Lv^q \Lh^1 (\R^3_+)  }   & \leq C 
  \|\p_3 u_k(\tau)  \|_{\Lv^{\infty }\Lh^2 (\R^3_+) } 
  \| u_l(\tau) \|_{\Lv^q \Lh^2 (\R^3_+) }         \nonumber    \\
    &  \leq
    C  
    \| u_0 \|_{X^s(\R^3_+)   } ^2
    (1+\tau)^{-1},   \nonumber  \\
 \|\Grad^{\alpha} (u_k u_l) (\tau)  \|_{\Lv^q \Lh^p (\R^3_+)  }   & \leq 
    C  \| \Grad^{\alpha}u_k(\tau)  \|_{ \Lv^{
    \infty  }\Lh^{\f{2p}{2-p}} (\R^3_+)}
    \| u_l(\tau) \|_{\Lv^q \Lh^2 (\R^3_+)  }    \nonumber \\
    & \leq  
    C \| u_0 \|_{X^s(\R^3_+)   } ^2
    (1+\tau)^{ -(1-\f{2-p}{2p})-\f{|\alphah|}{2}   } (1+\tau)^{-\f{1}{2}}   \nonumber  \\
    & = 
    C   \| u_0 \|_{X^s(\R^3_+)   } ^2 
    (1+\tau)^{ -(2- \f{1}{p} )-\f{|\alphah|}{2}   }.   \nonumber 
\end{align}
By the above estimates and employing the same calculations as in Lemma \ref{decay-duhamel-leq-2}, it holds for any $1\leq p < 2, 2\leq q \leq \infty$ that 
\begin{align*}
  &  \left\| \Grad^{\alpha} \int_0^t   e^{(t-\tau) \Deltah } \div_{\rm h} (\uh \otimes \uh)(\tau)     d \tau \right\|_{\Lv^q \Lh^p (\R^3_+)  }    \\
  & \quad \quad   \quad \quad   
    \leq 
    C
    \| u_0 \|_{X^s(\R^3_+)   } ^2
    (1+t)^{ -(1-\f{1}{p}) -\f{1+|\alphah|}{2}  } \log(2+t). 
\end{align*}

As in Lemma \ref{decay-duhamel-leq-2}, we will not give a detailed proof for the remaining integrals, since they share similar lines as Lemma \ref{decay-duhamel-2} together with the modifications indicated above.
\end{proof}

With Lemma \ref{decay-duhamel-6.2-lem} and the linear analysis at hand, one may repeat step by step the same arguments from Subsection \ref{sec-6.1}. The proof of Theorem \ref{main-theorem} is finished completely.

\section{Proof of Theorem \ref{main-theorem-2} }\label{proof-thm-1.3}
This section is devoted to the proof of Theorem \ref{main-theorem-2}. 
We first consider the estimate for $u_3$
since it is slightly easier than that of $\uh$.
We make use of the solution formula of $u_3(t)$ obtained in Corollary \ref{lemm:sol-form-u}:
\begin{align}             
    u_3(t,x) ={}&  
    U e ^{t \Delta_{\rm h}} (u_{0,3}- S \cdot u_{0,\rm h} )(x)\\
    &
    +
    V^{(+)}
    \int_0^t 
    e^{(t - \tau)\Deltah}
    \sum_{\ell,m=1}^2
    {\partial_{x_\ell}{S_{x_m}}}
    (u_{\ell}u_m)(\tau) d\tau \\
    &{}
    + 
   \left(V^{(-)} +  T \right)
    \int_0^t 
    e^{(t - \tau)\Deltah}
    |\nablah|
    (u_3(\tau)^2) d\tau  \\
    &
    -
    \left( 
    W -1-T  \right)     
    \int_0^t 
    e^{(t - \tau)\Deltah}
   \div_{\rm h}
    (\uh u_3)(\tau)   d\tau. 
\end{align}
In view of the linear analysis \eqref{enh-decay-1} in Lemma \ref{lemm-enh-decay}, we see
\begin{align} \label{pro-1.3-1}
 \|  U e ^{t \Delta_{\rm h}} (u_{0,3}- S \cdot u_{0,\rm h} )   \|_{ \mathcal{L}^1_1(\R^3_+)  } 
  &  \leq C \| u_0 \|_{ \mathcal{L}^1_1(\R^3_+)  } 
\end{align}
for all $t>0$.
Next, we use Lemma \ref{bdd-VWT}, Lemma \ref{lemm-heat}, Theorem \ref{main-theorem} and the embedding $L^1(\R^3_+) \hookrightarrow \mathcal{L}^1_{\infty}(\R^3_+)  $ to infer 
\begin{align*}
& \left\|   V^{(+)}
    \int_0^t 
    e^{(t - \tau)\Deltah}
    \sum_{\ell,m=1}^2
    {\partial_{x_\ell}{S_{x_m}}}
    (u_{\ell}u_m)(\tau) d\tau  \right\|_{ \mathcal{L}^1_1(\R^3_+)  }   \\
    &  \quad \quad 
 \leq C \int_0^t 
 \left\| 
    e^{(t - \tau)\Deltah}
    \sum_{\ell,m=1}^2
    {\partial_{x_\ell}{S_{x_m}}}
    (u_{\ell}u_m)(\tau)  \right\| _{ \mathcal{L}^1_1(\R^3_+)  }    d\tau         \\
    &   \quad  \quad
        \leq C  \sum_{\ell,m=1}^2  \int_0^t 
        (t-\tau)^{-\f{1}{2}}
        \|  (u_{\ell}u_m)(\tau)  \| _{ \mathcal{L}^1_{\infty}(\R^3_+)  }   
        d\tau
        \\
    &   \quad    \quad 
    \leq 
    C   \sum_{\ell,m=1}^2    \int_0^t 
      (t-\tau)^{-\f{1}{2}}  
      \|  (u_{\ell}u_m)(\tau)  \|_{ L^1(\R^3_+) }
    d\tau
    \\
    & \quad  \quad 
    \leq 
    C  
  \int_0^t 
      (t-\tau)^{-\f{1}{2}}   \| \uh(\tau) \|^2_{ L^2(\R^3_+) } 
      d\tau  \\
       & \quad  \quad 
    \leq 
    C  \| u_0 \|_{X^s(\R^3_+)   } ^2 
  \int_0^t 
      (t-\tau)^{-\f{1}{2}}  (1+\tau)^{-1}
      d\tau  \\
    &    \quad  \quad 
    \leq 
    C  \| u_0 \|_{X^s(\R^3_+)   },
\end{align*}
where in the last step we used the smallness of $\| u_0 \|_{X^s(\R^3_+)   }$ and the basic fact that
\begin{align*}
    &  \int_0^t 
      (t-\tau)^{-\f{1}{2}}  (1+\tau)^{-1}
      d\tau  \leq C \quad  \text{  for all }    t>0. 
\end{align*}
Indeed,
\begin{align*}
   \int_0^t 
      (t-\tau)^{-\f{1}{2}}  (1+\tau)^{-1}
      d\tau    & \leq 
      \int_0^{ \f{t}{2} } 
      (t-\tau)^{-\f{1}{2}}  (1+\tau)^{-1}
      d\tau  
      + 
      \int_{ \f{t}{2} }^t
      (t-\tau)^{-\f{1}{2}}  (1+\tau)^{-1}
      d\tau 
      \\
    &   \leq 
    C \left( 
    t^{-\f{1}{2}  } \log(2+t)
    +(1+t)^{-1}  t^{\f{1}{2}  }
    \right)
    \\
    &   \leq     C 
\end{align*}
if $t \geq 1$;
\begin{align*}
 \int_0^t 
      (t-\tau)^{-\f{1}{2}}  (1+\tau)^{-1}
      d\tau 
      \leq 
       \int_0^t   (t-\tau)^{-\f{1}{2}} 
        d\tau 
        \leq C  t^{\f{1}{2}  } 
        \leq C 
\end{align*}
if $0<t < 1$. 

In the same spirit, we see
\begin{align*}
& \left\|   \left(V^{(-)} +  T \right)
    \int_0^t 
    e^{(t - \tau)\Deltah}
    |\nablah|
    (u_3(\tau)^2) d\tau \right\|_{ \mathcal{L}^1_1(\R^3_+)  }   \\
    &  \quad \quad 
 \leq C \int_0^t 
 \left\| 
    e^{(t - \tau)\Deltah}
   |\nablah|
    (u_3(\tau)^2)  \right\| _{ \mathcal{L}^1_1(\R^3_+)  }    d\tau         \\
    &   \quad  \quad
        \leq C  \int_0^t 
        (t-\tau)^{-\f{1}{2}}
        \|  (u_{3} u_3)(\tau)  \| _{ \mathcal{L}^1_{\infty}(\R^3_+)  }   
        d\tau
        \\
    &   \quad    \quad 
    \leq 
    C      \int_0^t 
      (t-\tau)^{-\f{1}{2}}  
      \|  (u_{3}  u_3)(\tau)  \|_{ L^1(\R^3_+) }
    d\tau
    \\
    & \quad  \quad 
    \leq 
    C  
  \int_0^t 
      (t-\tau)^{-\f{1}{2}}   \| u_3(\tau) \|^2_{ L^2(\R^3_+) } 
      d\tau  \\
       & \quad  \quad 
    \leq 
    C  \| u_0 \|_{X^s(\R^3_+)   } ^2 
  \int_0^t 
      (t-\tau)^{-\f{1}{2}}  (1+\tau)^{-\f{3}{2}}
      d\tau  \\
    &    \quad  \quad 
    \leq 
    C  \| u_0 \|_{X^s(\R^3_+)   }
\end{align*}
and 
\begin{align*}
& \left\|   \left( 
    W -1-T  \right)     
    \int_0^t 
    e^{(t - \tau)\Deltah}
   \div_{\rm h} 
    (\uh u_3)(\tau)   d\tau \right\|_{ \mathcal{L}^1_1(\R^3_+)  }   \\
    &  \quad \quad 
 \leq C \int_0^t 
 \left\| 
    e^{(t - \tau)\Deltah}
   \nablah
    (\uh u_3)(\tau)   \right\| _{ \mathcal{L}^1_1(\R^3_+)  }    d\tau         \\
    &   \quad  \quad
        \leq C  \int_0^t 
        (t-\tau)^{-\f{1}{2}}
        \| (\uh u_3)(\tau)   \| _{ \mathcal{L}^1_{\infty}(\R^3_+)  }   
        d\tau
        \\
    &   \quad    \quad 
    \leq 
    C      \int_0^t 
      (t-\tau)^{-\f{1}{2}}  
      \|  (\uh u_3)(\tau)   \|_{ L^1(\R^3_+) }
    d\tau
    \\
    & \quad  \quad 
    \leq 
    C  
  \int_0^t 
      (t-\tau)^{-\f{1}{2}}   \| u_3(\tau) \|_{ L^2(\R^3_+) } 
       \| \uh (\tau) \|_{ L^2(\R^3_+) } 
      d\tau  \\
       & \quad  \quad 
    \leq 
    C  \| u_0 \|_{X^s(\R^3_+)   } ^2 
  \int_0^t 
      (t-\tau)^{-\f{1}{2}}  (1+\tau)^{-\f{5}{4}}
      d\tau  \\
    &    \quad  \quad 
    \leq 
    C  \| u_0 \|_{X^s(\R^3_+)   } . 
\end{align*}
Combining the above estimates and \eqref{pro-1.3-1}, one gets \eqref{decay-u3-L1L1} upon invoking the embedding 
$\mathcal{L}^1_1(\R^3_+)   \hookrightarrow   L^1(\R^3_+)  $.

Next, we consider the estimate for $\uh$.
On this case, it suffices to consider $\mathcal{D}_1^{\rm h}[u](t)$ because the other terms $\mathcal{D}_m^{\rm h}[u](t)$ ($m=2,3,4,5$) can be estimated similarly to the above argument in the space $\mathcal{L}^1_1(\R^3_+)$. Using the divergence-free condition, it holds 
\begin{align}
 &   \| \mathcal{D}_1^{\rm h}[u](t)\|
    _{L^1(\R^3_+ ) }  \\
&   \quad  \leq
    C
    \int_0^t
    \| \partial_{x_3}(u_3\uh)(\tau) \|_{L^1 (\R^3_+ )}
    d\tau\\ 
 &  \quad \leq
    C
    \int_0^t
    \Big(\| \div_{\rm h} \uh (\tau)\|_{L^2(\R^3_+ )}\| \uh(\tau)\|_{L^2(\R^3_+ )} + \| u_3(\tau) \|_{L^2 (\R^3_+ )}\| \partial_{x_3}\uh(\tau)\|_{L^2 (\R^3_+ ) }  \Big)
    d\tau\\ 
  &  \quad   \leq
    C
    \| u_0 \|_{X^s(\R^3_+)}^2
    \int_0^t
    (1+\tau)^{-\frac{5}{4}}
    d\tau\\ 
  &  \quad   \leq 
    C\| u_0 \|_{X^s(\R^3_+)}.
\end{align}
Thus, we complete the proof of Theorem \ref{main-theorem-2}.

{\bf{Acknowledgements}} 
M. Fujii was supported by Grant-in-Aid for Research Activity Start-up, Grant Number JP23K19011. 
Y. Li was supported by Natural Science Foundation of Anhui Province under grant number 2408085MA018, Natural Science Research Project in Universities of Anhui Province under grant number 2024AH050055, National Natural Science Foundation of China under grant number 12001003; he sincerely thanks Professor Yongzhong Sun for patient guidance and encouragement.

{\bf{Data Availability}} Data sharing is not applicable to this article as no datasets were generated or analyzed
during the current study.

{\bf{Conflicts of interest}} All authors certify that there are no conflicts of interest for this work.

\appendix

\def\thesection{\Alph{section}}
\section{Tool box for the Littlewood--Paley theory}
To make our paper self-contained and for the convenience of the reader, we recall the definition of Littlewood--Paley decomposition and some basic lemmas from Fourier analysis. They are extensively used in the paper without pointing out each time. The details of proof could be found in the nice monograph of Bahouri et al. \cite{Bah-Che-Dan-11}. 

To begin with, we introduce a homogeneous Littlewood–-Paley decomposition in $\R^d$. To this end, we fix two radial non-increasing functions $(\varphi,\chi)$ such that 
\begin{align*}
    &  \supp \, \chi \subset B(0,4/3),\,\,   \supp \, \varphi \subset \{ \xi \in \R^d: 3/4 \leq |\xi| \leq 8/3 \},         \\
    &   0\leq \chi,\varphi \leq 1,\,\, \varphi(\xi)=\chi(\xi/2)-\chi(\xi), \,\, \chi=1\,\,\text{on }\, B(0,3/4),  \\
    &   \sum_{j\in \mathbb{Z}} \varphi(2^{-j}\xi)=1 \,\,\text{  for any  } \xi \in \R^d \backslash\{0\}.
\end{align*}
The homogeneous dyadic operators are defined by
\[
\Delta_j f= \varphi (2^{-j}D)f =\mathscr{F}^{-1} \left(\varphi(2^{-j} \cdot) \mathscr{F}f   \right)
=2^{jd}h(2^{j}\cdot )\ast f
\]
with $h=\mathscr{F}^{-1}(\varphi)$. It then holds the homogeneous decomposition
\[
f= \sum_{j\in \mathbb{Z}}   \Delta_j f
\]
for any $f$ in tempered distribution $\mathscr{S}'(\R^d)$ modulo polynomials. In order to introduce the homogeneous Besov space, we consider a subspace of $\mathscr{S}'(\R^d)$, namely
\[
\mathscr{S}'_h(\R^d):= \{ f\in \mathscr{S}'(\R^d): \lim_{j\rightarrow -\infty}S_j f=0 \}
\]
with $S_j$ being the low frequency cut-off operators given by $S_j f:= \chi(2^{-j}D) f$. 

With the preliminary above, we state
\begin{df}
Let $s\in \R$ and $1\leq p,q \leq \infty$. The homogeneous Besov space $\dot{B}^{s}_{p,q}(\R^d)$ is defined via
\[
\dot{B}^{s}_{p,q}(\R^d):=\left\{ f\in \mathscr{S}'_h(\R^d): 
 \| f \|_{\dot{B}^{s}_{p,q}(\R^d)}=
 \left(  \sum_{j\in \mathbb{Z}}  2^{jsq} \|\Delta_j f \|^{q}_{L^p(\R^d)}  \right)^{\f{1}{q} }<\infty   \right \} .
\]
\end{df}

To proceed, we recall three useful lemmas as follows. The first one is the Bernstein-type lemma.
\begin{lemm}
Let $\mathcal{C}$ be an annulus and $B$ a ball. A positive constant $C$ exists such that for any nonnegative integer $k$, any couple $1\leq p\leq q \leq \infty$, and any function $u \in L^p(\R^d)$, we have
 \[
\sup_{|\alpha|=k}\| \p ^{\alpha}u \|_{L^q(\R^d)} \leq C^{k+1} \lambda^{ k+d(\f{1}{p} -\f{1}{q} ) } \| u \|_{L^p(\R^d)}
 \]
if $ \supp\, \hat{u} \subset \lambda B $;
 \[
C^{-k-1}  \lambda^{ k } \| u \|_{L^p(\R^d)}
 \leq
\sup_{|\alpha|=k}\| \p ^{\alpha}u \|_{L^p(\R^d)} \leq C^{k+1} \lambda^{ k } \| u \|_{L^p(\R^d)}
 \]
if $ \supp\, \hat{u} \subset \lambda \mathcal{C} $.
\end{lemm}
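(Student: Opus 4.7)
The strategy is the standard Fourier-analytic cutoff/rescaling argument. First I would fix two auxiliary radial bump functions $\phi,\psi\in C_c^\infty(\R^d)$ such that $\phi\equiv1$ on a neighbourhood of $B$ and $\psi\equiv1$ on a neighbourhood of $\mathcal{C}$ with $0\notin\supp\psi$. Setting $\phi_\lambda(\xi):=\phi(\xi/\lambda)$ and $\psi_\lambda(\xi):=\psi(\xi/\lambda)$, the hypothesis $\supp\hat u\subset\lambda B$ (resp.\ $\lambda\mathcal{C}$) gives the convolution identities $u=\lambda^d h(\lambda\cdot)\ast u$ (resp.\ $u=\lambda^d g(\lambda\cdot)\ast u$), where $h:=\mathscr{F}^{-1}\phi$ and $g:=\mathscr{F}^{-1}\psi$ are fixed Schwartz functions.

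For the two upper bounds, I would apply $\partial^\alpha$ to these identities and invoke Young's convolution inequality with exponents $1/r=1+1/q-1/p\in[1,\infty]$. This yields
\begin{align*}
\|\partial^\alpha u\|_{L^q}\leq \lambda^{d+|\alpha|}\|(\partial^\alpha h)(\lambda\cdot)\|_{L^r}\|u\|_{L^p}=\lambda^{|\alpha|+d(1/p-1/q)}\|\partial^\alpha h\|_{L^r}\|u\|_{L^p},
\end{align*}
and analogously with $g$ in place of $h$ when the spectrum is annular. The key quantitative point is that $\|\partial^\alpha h\|_{L^r}\leq C^{k+1}$ for all $|\alpha|=k$: since $\partial^\alpha h=\mathscr{F}^{-1}\bigl((i\xi)^\alpha\phi(\xi)\bigr)$ and $\phi$ is compactly supported, the elementary bound $\|\mathscr{F}^{-1}F\|_{L^r}\leq C_d\sum_{|\beta|\leq d+1}\|\partial_\xi^\beta F\|_{L^1}$ applied to $F=(i\xi)^\alpha\phi$ delivers exactly such geometric growth, because each $\xi$-derivative on $(i\xi)^\alpha$ either lowers its degree or falls on the fixed $\phi$, and $|\xi|\leq R$ on $\supp\phi$.

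For the reverse annular bound, I would invert the derivative on the annular spectral support by exploiting the algebraic identity $|\xi|^{2k}=\sum_{|\alpha|=k}\binom{k}{\alpha}\xi^{2\alpha}$ together with the relation $\psi(\xi/\lambda)\equiv1$ on $\lambda\mathcal{C}\supset\supp\hat u$. This gives
\begin{align*}
\hat u(\xi)=\sum_{|\alpha|=k}\binom{k}{\alpha}\frac{(-i\xi)^\alpha\psi(\xi/\lambda)}{|\xi|^{2k}}\widehat{\partial^\alpha u}(\xi).
\end{align*}
Since $|\xi|\gtrsim\lambda$ on $\supp\psi(\cdot/\lambda)$, the multiplier $m_{\lambda,\alpha}(\xi):=(-i\xi)^\alpha\psi(\xi/\lambda)/|\xi|^{2k}$ is Schwartz and scales as $m_{\lambda,\alpha}(\xi)=\lambda^{-k}m_{1,\alpha}(\xi/\lambda)$, so its inverse Fourier transform satisfies $\|\mathscr{F}^{-1}m_{\lambda,\alpha}\|_{L^1}=\lambda^{-k}\|\mathscr{F}^{-1}m_{1,\alpha}\|_{L^1}\leq C^{k+1}\lambda^{-k}$ by the same integration-by-parts argument as above. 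Young's inequality then yields $\lambda^k\|u\|_{L^p}\leq C^{k+1}\sup_{|\alpha|=k}\|\partial^\alpha u\|_{L^p}$, after absorbing the multinomial sum (whose cardinality is only polynomial in $k$) into the geometric constant.

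The main technical obstacle is securing the $C^{k+1}$-type growth rather than a worse dependence on $k$. This rests on the observation that in both the direct and the inverted multipliers, the Fourier symbol is the product of a polynomial in $\xi$ of degree $k$ with a fixed compactly supported smooth cutoff, localised to a region where $|\xi|\asymp 1$ (so that $|\xi|^{-2k}$ remains bounded there): every $\xi$-derivative either falls on the polynomial, lowering its degree, or on the fixed cutoff/quotient, producing constants independent of $k$. A careful bookkeeping of these derivatives yields a final bound of the form $\text{const}^{k+1}$, as claimed.
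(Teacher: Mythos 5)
Your argument is correct and is essentially the standard proof of Bernstein's lemma; the paper itself gives no proof of this statement, deferring to the monograph of Bahouri--Chemin--Danchin, and your cutoff/rescaling argument with Young's inequality for the upper bounds and the identity $|\xi|^{2k}=\sum_{|\alpha|=k}\frac{k!}{\alpha!}\xi^{2\alpha}$ for the reverse inequality is precisely the argument given there. One tiny imprecision: the sum of the multinomial coefficients over $|\alpha|=k$ is $d^{k}$ (geometric, not polynomial in $k$), but this is still harmlessly absorbed into the constant $C^{k+1}$ after enlarging $C$, so the conclusion stands.
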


We report in the following lemma on the action of Fourier multipliers behaving like homogeneous functions of degree $m$.
\begin{lemm}\label{app-lemm-2}
Let $\mathcal{C}$ be an annulus, $m\in \R$ and $k=2[1+d/2]$. Let $\sigma$ be a $k$ times differentiable function on $\R^d \backslash \{0\}$ such that for any $\alpha \in (\mathbb{N}\cup \{ 0 \} )^d$ with $|\alpha|\leq k$, there exists a constant $C_{\alpha}$ such that 
\[
|\p^{\alpha} \sigma(\xi) | \leq C_{\alpha} |\xi|^{m-|\alpha|}, \,\,\text{  for any  }\xi \in \R^d.
\]
Then there exists a positive constant $C$ depending only on the constants $C_{\alpha}$ such that for any $1\leq p \leq \infty$ and any $\lambda>0$ we have 
\[
\| \sigma(D) u \|_{L^p(\R^d)} \leq C \lambda^{m} \| u \|_{L^p(\R^d)} 
\]
for any $u \in L^p(\R^d)$ with $ \supp\, \hat{u} \subset \lambda \mathcal{C} $.
\end{lemm}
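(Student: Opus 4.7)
The plan is to reduce the claim to an $L^1$-bound on a convolution kernel obtained by multiplying $\sigma$ by a smooth annular cut-off, and then exploit the hypothesis $|\partial^{\alpha}\sigma(\xi)|\lesssim |\xi|^{m-|\alpha|}$ via Plancherel after weighting by $(1+|y|^{2})^{k/2}$.

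\textbf{Step 1: Localization.} Fix a radial function $\widetilde{\varphi}\in C_{c}^{\infty}(\mathbb{R}^{d})$ supported in a slightly enlarged annulus $\mathcal{C}'\Supset \mathcal{C}$ with $\widetilde{\varphi}\equiv 1$ on $\mathcal{C}$ and $\widetilde{\varphi}\equiv 0$ near the origin. For any $u$ with $\supp\,\widehat{u}\subset\lambda\mathcal{C}$, the identity $\widetilde{\varphi}(\xi/\lambda)\widehat{u}(\xi)=\widehat{u}(\xi)$ gives
\begin{align*}
    \sigma(D)u=K_{\lambda}\ast u,\qquad K_{\lambda}(y):=\frac{1}{(2\pi)^{d}}\int_{\mathbb{R}^{d}}e^{iy\cdot\xi}\sigma(\xi)\widetilde{\varphi}(\xi/\lambda)\,d\xi.
\end{align*}
Young's convolution inequality then reduces the lemma to showing $\|K_{\lambda}\|_{L^{1}(\mathbb{R}^{d})}\leq C\lambda^{m}$ with $C$ depending only on $d$, $\mathcal{C}$, and the constants $C_{\alpha}$.

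\textbf{Step 2: Scaling.} The change of variable $\xi=\lambda\eta$ yields $K_{\lambda}(y)=\lambda^{d}K^{\natural}(\lambda y)$, where
\begin{align*}
    K^{\natural}(z):=\frac{1}{(2\pi)^{d}}\int_{\mathbb{R}^{d}}e^{iz\cdot\eta}\sigma_{\lambda}(\eta)\,d\eta,\qquad \sigma_{\lambda}(\eta):=\sigma(\lambda\eta)\widetilde{\varphi}(\eta).
\end{align*}
Thus $\|K_{\lambda}\|_{L^{1}}=\|K^{\natural}\|_{L^{1}}$, so it suffices to prove $\|K^{\natural}\|_{L^{1}}\leq C\lambda^{m}$. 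The chain rule combined with the hypothesis on $\sigma$ gives, for $|\alpha|\leq k$ and $\eta\in\supp\,\widetilde{\varphi}\subset\mathcal{C}'$,
\begin{align*}
    |\partial^{\alpha}\sigma_{\lambda}(\eta)|\leq C_{\alpha,\mathcal{C}'}\,\lambda^{m},
\end{align*}
uniformly in $\lambda>0$, because $|\eta|^{m-|\beta|}$ is bounded on the compact set $\mathcal{C}'$ that is bounded away from the origin.

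\textbf{Step 3: $L^{1}$-estimate via weighted Plancherel.} Since $k=2[1+d/2]$ is an even integer with $k>d/2$, we have $(1+|z|^{2})^{-k/2}\in L^{2}(\mathbb{R}^{d})$. By the Cauchy--Schwarz inequality,
\begin{align*}
    \|K^{\natural}\|_{L^{1}}\leq \bigl\|(1+|z|^{2})^{-k/2}\bigr\|_{L^{2}}\cdot \bigl\|(1+|z|^{2})^{k/2}K^{\natural}\bigr\|_{L^{2}}.
\end{align*}
Because $k/2$ is an integer, $(1+|z|^{2})^{k/2}$ is a polynomial in $z$, and multiplication by $z^{\gamma}$ on the physical side corresponds to $i^{|\gamma|}\partial^{\gamma}$ on the Fourier side. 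Hence by Plancherel,
\begin{align*}
    \bigl\|(1+|z|^{2})^{k/2}K^{\natural}\bigr\|_{L^{2}}\leq C\sum_{|\alpha|\leq k}\|\partial^{\alpha}\sigma_{\lambda}\|_{L^{2}(\mathbb{R}^{d})}\leq C\,|\mathcal{C}'|^{1/2}\lambda^{m},
\end{align*}
where the last bound uses Step 2 and the compact support of $\widetilde{\varphi}$. Combining these two displays completes the kernel estimate, and Step 1 finishes the proof.

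\textbf{Main obstacle.} Everything here is standard harmonic analysis; the only delicate bookkeeping point is verifying that $k=2[1+d/2]$ is simultaneously large enough to ensure $(1+|z|^{2})^{-k/2}\in L^{2}(\mathbb{R}^{d})$ (which requires $k>d/2$) and even, so that $(1+|z|^{2})^{k/2}$ is a polynomial and no fractional differentiation of $\sigma_{\lambda}$ intervenes. The choice $k=2[1+d/2]$ is tailored precisely for this, and no deeper difficulty arises.
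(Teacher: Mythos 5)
Your proposal is correct, and the paper itself gives no proof of this appendix lemma — it simply refers the reader to the monograph of Bahouri--Chemin--Danchin, where the argument is essentially the one you give (localize with an annular cut-off, rescale, and bound the $L^1$ norm of the kernel using the derivative bounds on the symbol). Your weighted Cauchy--Schwarz/Plancherel reduction is a standard and valid way to carry out the kernel estimate, and your bookkeeping on $k=2[1+d/2]$ (even and $>d/2$) is exactly the point that makes it work.
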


The next lemma concerns the smoothing effect of heat flow.
\begin{lemm}
Let $\mathcal{C}$ be an annulus. Then there exist two positive constants $c,C$ such that for any $1\leq p \leq \infty$ and any couple $(t,\lambda)$ of positive real numbers, we have 
\[
\| e^{t \Delta}u \|_{L^p(\R^d)} \leq C e^{-ct\lambda^2} \| u \|_{L^p(\R^d)}
\]
if $ \supp\, \hat{u} \subset \lambda \mathcal{C} $.
\end{lemm}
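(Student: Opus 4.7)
The plan is to reduce everything to showing that the convolution kernel of $e^{t\Delta}$, restricted to frequencies in $\lambda\mathcal{C}$, has $L^1$ norm bounded by $Ce^{-ct\lambda^2}$. First I would fix an auxiliary radial function $\widetilde{\varphi} \in C_c^{\infty}(\R^d)$ equal to $1$ on $\mathcal{C}$ and supported in a slightly larger annulus $\widetilde{\mathcal{C}} \subset \{a \leq |\eta| \leq b\}$ with $0 < a < b$. Since $\widehat{u}$ is supported in $\lambda \mathcal{C}$, one has the exact identity
\begin{equation}
e^{t\Delta}u = K_{t,\lambda} * u, \qquad K_{t,\lambda} := \mathscr{F}^{-1}\!\left[ e^{-t|\xi|^2}\widetilde{\varphi}(\xi/\lambda) \right],
\end{equation}
so by Young's inequality it suffices to prove $\|K_{t,\lambda}\|_{L^1(\R^d)} \leq C e^{-ct\lambda^2}$ with constants independent of $t>0$ and $\lambda>0$.

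Next I would remove the parameter $\lambda$ by the change of variables $\xi = \lambda \eta$, which gives $K_{t,\lambda}(x) = \lambda^{d} K_{s}(\lambda x)$ with $s := t\lambda^2$ and $K_s(y) := \mathscr{F}^{-1}[e^{-s|\eta|^2}\widetilde{\varphi}(\eta)](y)$. Since $\|K_{t,\lambda}\|_{L^1(\R^d)} = \|K_s\|_{L^1(\R^d)}$, the claim reduces to the one-parameter bound
\begin{equation}
\|K_s\|_{L^1(\R^d)} \leq C e^{-c s} \qquad \text{for all } s > 0,
\end{equation}
with $c>0$ depending only on $a$.

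To extract the exponential decay, pick any $c_0 \in (0, a^2)$ and factor $e^{-s|\eta|^2}\widetilde{\varphi}(\eta) = e^{-c_0 s}\cdot \rho_s(\eta)$, where $\rho_s(\eta) := e^{-s(|\eta|^2 - c_0)}\widetilde{\varphi}(\eta)$. On $\supp\widetilde{\varphi}$ one has $|\eta|^2 - c_0 \geq a^2 - c_0 =: \delta > 0$. The final step is to show that $\|\mathscr{F}^{-1}\rho_s\|_{L^1(\R^d)}$ is bounded uniformly in $s \geq 0$. For this I would use the standard Schwartz-type inequality
\begin{equation}
\|\mathscr{F}^{-1}\rho_s\|_{L^1(\R^d)} \leq C_d \sum_{|\alpha|\leq d+1} \|\partial^{\alpha}\rho_s\|_{L^1(\R^d)},
\end{equation}
which follows from weighting by $(1+|y|)^{d+1}$ and applying Cauchy--Schwarz. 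By Leibniz, $\partial^{\alpha}\rho_s$ is a sum of terms of the form $P_{\alpha,\beta}(s\eta)\, e^{-s(|\eta|^2 - c_0)}\, \partial^{\beta}\widetilde{\varphi}(\eta)$ with $P_{\alpha,\beta}$ polynomials. On $\supp\widetilde{\varphi}$ one has $|\eta|\leq b$ and $|\eta|^2 - c_0 \geq \delta$, so $|P_{\alpha,\beta}(s\eta)|\, e^{-s(|\eta|^2 - c_0)} \leq C(1+s)^{M}e^{-\delta s}$, which is bounded uniformly in $s\geq 0$. Combining this uniform bound with the factor $e^{-c_0 s}$ proves $\|K_s\|_{L^1} \leq C e^{-c_0 s}$ and hence the lemma with $c = c_0$.

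The main (very mild) obstacle is just keeping track of the two scalings simultaneously: one must verify that the choice of auxiliary cutoff $\widetilde{\varphi}$ depends only on $\mathcal{C}$, not on $\lambda$, so that the constants $a,b,\delta,C_d$ appearing in the estimate of $\|K_s\|_{L^1}$ are genuinely universal, and that the rescaling $\xi \mapsto \lambda \eta$ exchanges the $t,\lambda$ dependence cleanly for the single parameter $s=t\lambda^2$. Once this bookkeeping is in order, all of the work is in the uniform-in-$s$ bound for the smooth compactly supported multiplier $\rho_s$, which is elementary.
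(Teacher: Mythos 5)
Your proof is correct. The paper does not supply its own argument for this lemma --- it is stated in the appendix with a pointer to the monograph of Bahouri--Chemin--Danchin \cite{Bah-Che-Dan-11} --- and your proposal is precisely the standard proof given there: an auxiliary cutoff equal to $1$ on $\mathcal{C}$, Young's inequality, the rescaling that collapses $(t,\lambda)$ into $s=t\lambda^2$, and the weighted $L^1$ bound on the kernel via derivatives of the compactly supported multiplier (note only that the weight $(1+|y|)^{-(d+1)}$ is already integrable on $\R^d$, so no Cauchy--Schwarz step is actually needed).
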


\end{document}